\setlist[enumerate]{itemsep=0pt}
\setlist[enumerate,1]{label=\textup{(}\roman*\textup{)}}
\title[Limit points and long gaps between primes]
      {Limit points and long gaps between primes}
\author[R.\ Baker]{Roger Baker}
\address{Department of Mathematics, 
         Brigham Young University, 
         Provo UT, USA.}
\email{baker@math.byu.edu}
\author[T.\ Freiberg]{Tristan Freiberg}
\address{Department of Pure Mathematics, 
         University of Waterloo, 
         Waterloo ON, CANADA.}
\email{tfreiberg@uwaterloo.ca}
\date{\today}
\begin{document}


\begin{abstract}
Let $d_n \defeq p_{n+1} - p_n$, where $p_n$ denotes the $n$th 
smallest prime, and let 
$R(T) \defeq \log T \log_2 T\log_4 T/(\log_3 T)^2$ (the 
``Erd{\H o}s--Rankin'' function).
We consider the sequence $(d_n/R(p_n))$ of normalized prime 
gaps, and show that its limit point set contains at least 
$25\%$ of nonnegative real numbers.
We also show that the same result holds if $R(T)$ is replaced by 
any ``reasonable'' function that tends to infinity more slowly 
than $R(T)\log_3 T$.
We also consider ``chains'' of normalized prime gaps.
Our proof combines breakthrough work of Maynard and Tao on 
bounded gaps between primes with subsequent developments of 
Ford, Green, Konyagin, Maynard and Tao on long gaps between 
consecutive primes.  
\end{abstract}

\maketitle


\section{Introduction}
 \label{sec:intro}

Let $d_n \defeq p_{n+1} - p_n$, where $p_n$ denotes the $n$th 
smallest prime.
One variant of the prime number theorem states that
\[
 \frac{1}{x}
  \sum_{n \le x}
   \frac{d_n}{\log p_n}
    \sim 
     1
      \quad 
       (x \to \infty),
\]
that is, $d_n/\log p_n$ is approximately $1$ on average over 
$n \le x$.
As to the finer questions pertaining to the distribution of 
primes, we have little more than conjecture in the way of answers.
Heuristics based on Cram\'er's model%
\footnote{%
For details, we highly recommend the insightful expository article 
\cite{SOUND} of Soundararajan. 
}
suggest that for any given real numbers $b > a \ge 0$, 
\[
   \frac{1}{x}
  \#\big\{ n \le x : a < d_n/\log p_n \le b \big\} 
     \sim
      \int_a^b
       \e^{-t}
        \dd t
         \quad 
       (x \to \infty).
\]
However, we do not even know of any specific limit point of the 
sequence $(d_n/\log p_n)$, except for $0$ and $\infty$, 
the former having been known for just a decade, thanks to the 
groundbreaking work of Goldston--Pintz--Y{\i}ld{\i}r{\i}m 
\cite{GPY}.
(The latter follows from a 1931 result of Westzynthius 
\cite{WES}.)  

This limit point lacuna notwithstanding, Hildebrand and Maier 
\cite{HM} showed in 1988 that a positive (but unspecified) 
proportion of nonnegative real numbers are limit points of 
$(d_n/\log p_n)$.
More recently, the second author, Banks and Maynard \cite{BFM} 
have shown that in fact at least $12.5\%$ of nonnegative real 
numbers are limit points of $(d_n/\log p_n)$.
The proof strategy in \cite{BFM} incorporates an 
``Erd{\H o}s--Rankin'' type construction for producing long gaps 
between consecutive primes into the celebrated Maynard--Tao sieve, 
which was originally developed to produce short gaps between 
primes.
More recently still, Ford, Green, Konyagin, and Tao \cite{FGKT},  
and (independently) Maynard \cite{MAY2}, have settled the 
notorious ``Erd{\H o}s--Rankin problem'' by showing that $\infty$ 
is a limit point of $(d_n/R(p_n))$, where%
\footnote{%
We define $\log_2 T \defeq \log\log T$,  
$\log_3 T \defeq \log\log\log T$ and so on. 
}
$
 R(T)
  \defeq 
   \log T \log_2 T \log_4 T/(\log_3 T)^2
$.

We are therefore motivated to study limit points of 
$(d_n/R(p_n))$.
Using basically the same strategy as in \cite{BFM}, and the work 
of Ford, Green, Konyagin, and Tao \cite{FGKT}, 
Pintz \cite{PIN2,PIN3} has shown that at least $25\%$ of 
nonnegative real numbers are limit points of $(d_n/R(p_n))$.
In fact, Pintz's result is that the same statement holds if the 
normalizing function $R(T)$ is replaced by any function --- 
subject to certain technical conditions --- that tends to infinity 
no faster than $R(T)$, for example $\log T\log_2 T/(\log_3 T)^2$.

Ford, Green, Konyagin, Maynard and Tao \cite{FGKMT} have actually 
shown that, for infinitely many $n$, $d_n \gg R(p_n)\log_3 p_n$.
The purpose of this paper is to fully integrate the work of the  
five-author paper \cite{FGKMT} into the study of limit points of 
normalized prime gaps initiated in \cite{BFM, PIN2, PIN3}.
In so doing, we extend the aforementioned result of Pintz in three 
ways. 

First, we show that the normalizing function $R(T)$ may be 
replaced by any ``reasonable'' function that tends to infinity 
more slowly than $R(T)\log_3 T$, for example
$R_1(T) = \log T\log_2 T/\log_3 T$.
Second, we show that the $25\%$ may conditionally be improved to 
$33\frac{1}{3}\%$ or even $50\%$ on a certain conjecture 
concerning the level of distribution of the primes.
Third, we also consider ``chains'' of normalized, consecutive gaps 
between primes (cf.\ Theorem \ref{thm:chains}).

Precisely what we mean by a ``reasonable'' function is best 
explained in context, so we defer the statement of our main result 
to \S\ref{sec:BFM} (cf.\ Theorem \ref{thm:general}).
Examples of ``reasonable'' functions are $\log_6 T$, 
$\sqrt{\log T}$, $\log_2 T/\sqrt{\log_3 T}$, $(\log T)^{7/9}$, 
$\log T$, $R(T)$, $R_1(T)$ and $R_1(T)\log_5 T$.
Any one of these could replace $R_1(T)$ in the following special 
case of Theorem \ref{thm:general}, which will serve as a 
placeholder.

\begin{theorem}
 \label{thm:main}
Let $d_n \defeq p_{n+1} - p_n$, where $p_n$ denotes the $n$th 
smallest prime, and let $\LP[R_1]$ denote the set of limit points 
in $[0,\infty]$ of the sequence $(d_n/R_1(p_n))_{p_n \ge T_0}$, 
where 
\[
 R_1(T)
  \defeq 
   \log T \log_2 T/\log_3 T
\]
and $T_0$ is large enough so that $\log_3 T_0 \ge 1$.
Given any five nonnegative real numbers $\alpha_1,\ldots,\alpha_5$ 
with $\alpha_1 \le \cdots \le \alpha_5$, we have 
$
 \{\alpha_j - \alpha_i : 1 \le i < j \le 5\}  
  \cap 
   \LP[R_1]
    \ne 
     \emptyset.
$
\end{theorem}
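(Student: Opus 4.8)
The plan is to reduce Theorem~\ref{thm:main} to a single ``comb'' proposition supplied by the sieve machinery, and then to conclude via a pigeonhole over the finitely many pairs $(i,j)$. First a reduction: we may assume $\alpha_1<\alpha_2<\cdots<\alpha_5$, since if $\alpha_i=\alpha_{i+1}$ for some $i$ then $\alpha_{i+1}-\alpha_i=0$, and $0\in\LP[R_1]$ (indeed $0$ is a limit point of $(d_n/\log p_n)$ by \cite{GPY}, and since $R_1(p_n)\ge\log p_n$ for $p_n$ large we have $0\le d_n/R_1(p_n)\le d_n/\log p_n$). Since only the differences $\alpha_j-\alpha_i$ enter the statement, we may also translate and assume $0<\beta_1<\beta_2<\cdots<\beta_5$, where $\beta_i\defeq\alpha_i-\alpha_1+1$, so that $\beta_j-\beta_i=\alpha_j-\alpha_i$ for all $i<j$.

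The heart of the matter is the following comb proposition, which I would establish by fusing the Erd{\H o}s--Rankin construction with the Maynard--Tao sieve in the manner of \cite{FGKMT}. Fix a small $\varepsilon>0$. Then there are infinitely many integers $c$ for which the interval $I_c\defeq(c,\,c+(\beta_5+\varepsilon)R_1(c)]$ has the structure:
\begin{enumerate}
\item every integer in $I_c$ is composite, except possibly those lying in one of five ``windows'' $W_1,\ldots,W_5$, where each $W_i$ is an interval of length $o(R_1(c))$ lying within distance $o(R_1(c))$ of the point $c+\beta_i R_1(c)$; and
\item at least two of $W_1,\ldots,W_5$ contain a prime.
\end{enumerate}
The sieve clears runs of consecutive composites of length up to $\asymp R(c)\log_3 c$ (the strength extracted in \cite{FGKMT}), so there is ample room for $I_c$, whose length $(\beta_5+\varepsilon)R_1(c)$ is $o(R(c)\log_3 c)$; the Maynard--Tao weights are run across the union of the five windows, with the covering system arranged so as to force primes into at least two distinct windows while keeping the CRT modulus $Q=c^{o(1)}$ small enough for Bombieri--Vinogradov to supply the requisite level of distribution. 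Verifying that $R_1$ --- and not merely $R$ --- may play this role is exactly what the ``reasonable function'' hypotheses of Theorem~\ref{thm:general} are designed for.

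Granting the comb proposition, apply it with $\varepsilon=1/m$ for $m=1,2,\ldots$ and pick for each $m$ such a $c=c_m$ (chosen large). Let $W_{j_1}$ be the window of least index containing a prime and $W_{j_2}$ the next one; by (ii), $1\le j_1<j_2\le 5$. By (i), the largest prime in $W_{j_1}$ and the smallest prime in $W_{j_2}$ are consecutive primes $p_n<p_{n+1}$, and
\[
\frac{d_n}{R_1(p_n)}=\frac{p_{n+1}-p_n}{R_1(p_n)}=(\beta_{j_2}-\beta_{j_1})+o(1)=(\alpha_{j_2}-\alpha_{j_1})+o(1)\qquad(m\to\infty),
\]
using $p_n=c_m(1+o(1))$ and the slow variation of $R_1$. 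Since there are only $\binom{5}{2}=10$ pairs $(i,j)$, some pair $(i_0,j_0)$ occurs for infinitely many $m$; along that subsequence $d_n/R_1(p_n)\to\alpha_{j_0}-\alpha_{i_0}$ with $p_n\to\infty$, whence $\alpha_{j_0}-\alpha_{i_0}\in\LP[R_1]$, as required.

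The main obstacle is clause (ii): forcing primes into \emph{two} of the five windows for the \emph{same} $c$. A single run of Maynard--Tao on one admissible tuple returns primes only for infinitely many shifts, and one cannot naively demand a prime in each of two prescribed windows simultaneously; the Maynard--Tao machinery must be deployed jointly over all five window positions, compatibly with the covering system clearing $I_c$, and within the available level-of-distribution budget. It is precisely because that budget is limited that five windows are needed unconditionally --- under the Elliott--Halberstam conjecture four suffice, and under a stronger hypothesis three do, which is the source of the conditional $33\tfrac13\%$ and $50\%$ improvements mentioned in the introduction. This is where the work imported from \cite{FGKMT} does the heavy lifting.
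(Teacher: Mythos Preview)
Your proposal is correct in outline and follows essentially the same route as the paper. Your ``comb proposition'' is precisely what the paper establishes by combining Corollary~\ref{cor:thm2} (the modified Ford--Green--Konyagin--Maynard--Tao covering, which clears $(x,y]$ except for a prescribed admissible set $\cH$ placed in the five windows) with Theorem~\ref{thm:BFM4.3} (the Maynard--Tao input, which forces at least two of the five sub-tuples $\cH_1,\ldots,\cH_5$ to catch a prime for some $n\equiv b\pmod W$); the paper then packages this as Theorem~\ref{thm:general} and deduces Theorem~\ref{thm:main} by taking $f_1(T)=\log T$, $f_2(x)=x\log x/\log_2 x$, $\theta=1/2$. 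Your pigeonhole over the $\binom{5}{2}$ pairs and the slow-variation argument for $R_1$ match the paper's conclusion of the proof of Theorem~\ref{thm:general} verbatim.

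Two small remarks. First, your preliminary reduction to strict inequalities is unnecessary: the paper's construction handles coincident $\alpha_i$ directly (the windows may overlap; Lemma~\ref{lem:goodktuple} still furnishes disjoint $\cH_i$), so there is no need to invoke $0\in\LP[R_1]$ separately. Second, your parenthetical on the conditional improvements is slightly off: in the paper's framework $\lceil 2/\theta\rceil+1$ windows are needed, so $\theta=1$ (the full Elliott--Halberstam level) already gives three windows, not four; four corresponds to $2/3\le\theta<1$. This does not affect the unconditional argument.
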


As in \cite[Corollary 1.2]{BFM}, one may deduce from Theorem 1.1 
that, with $\lambda$ denoting the Lebesgue measure on $\RR$, 
\begin{equation}
\label{eq:BFM1.4}
  \lambda([0,X] \cap \LP[R_1])
   \ge 
    X/(4(1 + 1/2 + 1/3 + 1/4))
     \quad (X \ge 0), 
\end{equation} 
and%
\footnote{%
Here, by $o(1)$ we mean a positive quantity that tends to zero as 
$X$ tends to infinity.
} 
(with an ineffective $o(1)$),
\begin{equation}
\label{eq:BFM1.3}
  \lambda([0,X] \cap \LP[R_1])
   \ge 
    (1 - o(1))
     X/4
      \quad (X \to \infty).
\end{equation}
As we will see, assuming a certain variant of the 
Elliott--Halberstam conjecture (cf.\ Hypothesis \ref{hyp:EH} 
below), one has
$
\{\alpha_2 - \alpha_1, \alpha_3 - \alpha_1, \alpha_3 - \alpha_2\} 
 \cap \LP[R_1] 
  \ne 
   \emptyset
$
for any {\em three} nonnegative real numbers 
$\alpha_1 \le \alpha_2 \le \alpha_3$, with corresponding 
improvements to \eqref{eq:BFM1.4} and \eqref{eq:BFM1.3}
(viz.\ $2 = 3 - 1$ replaces $4 = 5 - 1$).

\subsection*{Acknowledgments}

The second author gratefully acknowledges the hospitality of 
Brigham Young University, where the work on this paper commenced.


\section{Notation and terminology}
 \label{sec:notation}

We rely heavily on the paper \cite{FGKMT} of Ford, Green, 
Konyagin, Maynard and Tao, and we follow their notation and 
conventions.
We explain these conventions here, among others, for completeness' 
sake.

\begin{enumerate}[label=---]
 \item The set of all primes is denoted by $\bP$; $p,q,s$ stand 
       for primes; $p_n$ denotes the $n$th smallest prime.
 \item For $a,b \in \ZZ$, we define 
       $a \pod{b} \defeq \{a + bc : c \in \ZZ\}$.
       Thus, $a_1 \equiv a_2 \pod{b}$ if and only if 
       $a_1 \pod{b} = a_2 \pod{b}$.         
 \item A finite set $\cH$ of integers is {\em admissible} if and 
       only if $\cH$ is not a complete set of residues modulo $p$, 
       for any prime $p$.
 \item We say an integer is {\em $x$-smooth} ($x \in \RR$) if and 
       only if its prime divisors are all less than or equal to 
       $x$.    
 \item For $n \in \ZZ$ and $\cH \subseteq \ZZ$, we define   
       $n + \cH \defeq \{n + q : q \in \cH\}$.       
 \item For statements $S$, $\ind{S} \defeq 1$ if $S$ is true and 
       $\ind{S} \defeq 0$ if $S$ is false.
 \item The cardinality of a set $\cS$ is denoted by $\#\cS$ or 
       $\#(\cS)$.
       The indicator function for $\cS \subseteq \cT$ (with $\cT$ 
       clear in context) is denoted $\ind{\cS}$.
       That is, for $t \in \cT$, 
       $\ind{\cS}(t) \defeq \ind{t \in \cS}$. 
 \item We write $\PP$ for probability and $\EE$ for expectation.
 \item Boldface symbols such as $\bX$ or $\ba$ denote random 
       variables, while non-boldface symbols such as $X$ or $a$ 
       denote their deterministic counterparts.
       Vector-valued random variables are indicated in arrowed 
       boldface, for instance 
       $\vec{\ba} = (\vec{\ba}_s)_{s \in \cS}$ denotes a random 
       tuple of random variables indexed by the set $\cS$.
 \item If $\bX$ takes at most countably many values, we define the 
       {\em essential range} of $\bX$ to be the set of all $X$ 
       such that $\PP(\bX = X) \ne 0$.
 \item If $E$ is an event of nonzero probability, 
       \[
        \PP(F \mid E)
         \defeq 
          \frac{\PP(F \land E)}{\PP(E)}
       \]
       for any event $F$, and 
       \[
        \EE(\bX \mid E)
         \defeq 
          \frac{\EE(\bX \ind{E})}{\PP(E)}
       \]
       for any absolutely integrable real-valued random variable 
       $\bX$.
       If $\bY$ is another random variable taking at most 
       countably many values, we define the conditional 
       probability $\PP(F \mid \bY)$ to be the random variable 
       that equals $\PP(F \mid \bY = Y)$ on the event $\bY = Y$ 
       for each $Y$ in the essential range of $\bY$, and similarly 
       define the conditional expectation $\EE(\bX \mid \bY)$ to 
       be the random variable that equals $\EE(\bX \mid \bY = Y)$ 
       on the event $\bY = Y$.
 \item Throughout, $x$ denotes a parameter to be thought of as 
       tending to infinity.
 \item Thus, $o(1)$ signifies a quantity that tends to zero as 
       $x \to \infty$ and $X \sim Y$ denotes that 
       $X = (1 + o(1))Y$.
 \item Expressions of the form $X = O(Y)$, $X \ll Y$ and $Y \gg X$ 
       all denote that $|X| \le c|Y|$ throughout the domain of 
       $X$, for some constant $c > 0$.
 \item The constant $c$ is to be taken as independent of any 
       parameter unless indicated otherwise, as in 
       $X \ll_{\delta,A} Y$ for instance, in which $c$ depends on 
       $\delta$ and $A$.
 \item We write $X = O_{\le}(Y)$ to denote that one can take 
       $c = 1$.
 \item We write $X \asymp Y$ to denote that $X \ll Y \ll X$.
\end{enumerate}


\section{Proof strategy}
 \label{sec:outline}
 
Let $x$ be a large number and set 
$
 y \defeq cx\log x\log_2 x/\log_3 x
$, 
where $c > 0$ is a certain small constant.
Ford, Green, Konyagin, Maynard and Tao \cite{FGKMT} show that if 
$C$ is large enough, then there exists a vector 
$(c_p \pod{p})_{p \le Cx}$ of residue classes for which 
\[
 \big((x,y] \cap \ZZ \big) 
  \setminus \, {\textstyle \bigcup_{p \le Cx} c_p \pod{p}}
 =
   \emptyset.
\]
Thus, if $b \pod{W}$ is the residue class modulo 
$W \defeq \prod_{p \le Cx} p = \e^{(1 + o(1))Cx}$ for which 
$b \equiv -c_p \pod{p}$ for each $p \le Cx$, then for 
$n \equiv b \pod{W}$ with $n + x > Cx$, 
\[
 \bP \cap (n + x,n + y] = \emptyset.
\]

We generalize this slightly by proving that if $\cH$ is any set of 
$K$ primes in $(x,y]$ with $K \le \log x$ (say), the residue 
classes may be chosen so that 
\[
 \big((x,y] \cap \ZZ \big) 
  \setminus \, {\textstyle \bigcup_{p \le Cx} c_p \pod{p}}
  =
   \cH
\]
and hence 
\[
 \bP \cap (n + x,n + y] = \bP \cap n + \cH.
\]
Note that $\cH$, being a set of $K \le \log x$ primes greater than 
$p_K = O(K\log K)$, is admissible.  

Now let $M \ge 2$ be an integer with $M \mid K$ and 
let $\cH = \cH_1 \cup \cdots \cup \cH_M$ be a partition of $\cH$ 
into $M$ subsets of equal size.
As was shown in \cite{BFM}, with $M = 9$, a smaller choice of $y$ 
and a minor technical condition on $\cH$, the Maynard--Tao sieve 
method establishes that for large $N$, there exists 
$n \in (N,2N] \cap b \pod{W}$ and a pair $i < j$ for which 
\[
 \#(\bP \cap n + \cH_{i}), \#(\bP \cap n + \cH_{j}) \ge 1,
\] 
provided $K$ is sufficiently large and $W \le N^{\eta}$ for some 
small $\eta$.

Choosing $j - i$ to be minimal, we obtain a pair of consecutive 
primes in $n + \cH$.
We may carefully choose our primes in $\cH$ so that the spacings 
between them grow faster than $x/\log x$ but slower than $y$.

Actually, for reasons related to level of distribution and 
``Siegel'' zeros, we require that $W$ not be a multiple of a 
certain putative ``exceptional'' modulus less than
$N^{O(\eta)}$.
The largest prime divisor $p'$ of this exceptional modulus, if it 
exists, satisfies $p' \gg \log_2 N^{\eta} \gg \log x$.
For this reason, we introduce a set $\cZ$ of ``unusable'' primes, 
which has the properties of $\{p'\}$.
Their effect is negligible.

Pintz \cite{PIN3} has very recently given an elegant 
simplification of part of this argument, which we take advantage 
of in this paper, and which shows that one can take $M = 5$.


\section{A modification of Ford--Green--Konyagin--Maynard--Tao}
 \label{sec:FGKMT}
 
\subsection{Main results}
 \label{subsec:fgkmtmain}

Given a large number $x$ we define  
\begin{equation}
 \label{eq:fgkmt3.1}
  y \defeq cx\frac{\log x \log_3 x}{\log_2 x},
\end{equation}
where $c$ is a certain (small) fixed positive constant, and 
\begin{equation}
 \label{eq:fgkmt3.2}
  z \defeq x^{\log_3 x/(4\log_2 x)}.
\end{equation}
We then define
\begin{align}
 \cS & \defeq \{\text{$s$ prime} : (\log x)^{20} < s \le z\}, \label{eq:fgkmt3.3} \\
 \cP & \defeq \{\text{$p$ prime} : x/2 < p \le x\},           \label{eq:fgkmt3.4} \\
 \cQ & \defeq \{\text{$q$ prime} : x < q \le y\}.             \label{eq:fgkmt3.5} 
\end{align}
For vectors of residue classes 
$\vec{a} \defeq (a_s \pod{s})_{s \, \in \, \cS}$ 
and 
$\vec{b} \defeq (b_p \pod{p})_{p \, \in \, \cP}$, 
we define sifted sets
\[
  S(\vec{a})
  \defeq 
   \ZZ 
    \, \setminus \, 
     \textstyle{\bigcup_{s \, \in \, \cS}} \,
      a_s \pod{s}
\quad 
 \text{and}
  \quad 
  S(\vec{b})
  \defeq 
   \ZZ 
    \, \setminus \, 
     \textstyle{\bigcup_{p \, \in \, \cP}} \,
      b_p \pod{p}.
\]      
We note that in view of the prime number theorem (with suitably 
strong error term) and \eqref{eq:fgkmt3.1},   
\begin{equation}
 \label{eq:Qsize}
 \#\cQ 
  = 
   \frac{y}{\log x}
    \bigg(
     1 + O\bigg(\frac{\log_2 x}{\log x}\bigg)
    \bigg).
\end{equation}
Finally, let $K$ be any natural number satisfying 
\begin{equation}
 \label{eq:Kbnd}
  K \le \log x.
\end{equation}
By \eqref{eq:Qsize}, we may suppose $x$ is large enough so that 
$\cQ$ contains at least $K$ primes.
Since $p_K \ll K\log K$, we may also suppose that $p_K \le x$.
We fix any 
\begin{equation}
 \label{eq:cHdef}
 \cH 
  \defeq \{q_1,\ldots,q_K\}
   \subseteq \cQ 
  \quad 
   \text{with}
    \quad 
   \#\cH = K.
\end{equation}
Note that $\cH$, being a set of $K$ primes larger than $p_K$, is 
an admissible set.

\begin{theorem}[Sieving for primes]
 \label{thm:fgkmt2}
For all sufficiently large $x$, there exist vectors of residue 
classes 
$\vec{a} = (a_s \pod{s})_{s \, \in \, \cS}$ 
and 
$\vec{b} = (b_p \pod{p})_{p \, \in \, \cP}$ 
such that $\cH \subseteq S(\vec{a}) \cap S(\vec{b})$ and 
\begin{equation}
 \label{eq:fgkmt3.6}
  \#(\cQ \cap S(\vec{a}) \cap S(\vec{b}))
   \ll
    \frac{x}{\log x}, 
\end{equation}
where the implied constant is absolute.
\end{theorem}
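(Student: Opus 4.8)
The plan is to follow the Erdős–Rankin–Ford–Green–Konyagin–Maynard–Tao sieving scheme, but carrying the fixed admissible set $\cH$ of $K$ primes along as an ``untouched'' set throughout. The starting point is that the primes $p\in\cP$ should never be used to sift elements of $\cH$, so I first set $b_p\pod p$ for each $p\in\cP$ to avoid $\cH$; since $\#\cH=K\le\log x$ is tiny compared to $\#\cP\asymp x/\log x$, one can do this while still covering almost all of $\cQ$. Likewise for the small primes $s\in\cS$: I will choose $a_s\pod s$ so that $\cH\subseteq S(\vec a)$, which is essentially automatic because each $q_i\in\cH$ lies in a prescribed residue class mod $s$ and we simply forbid that one class (there are $s>(\log x)^{20}$ classes to choose from, and at most $K\le\log x$ of them are blocked).

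The heart of the matter is the counting bound \eqref{eq:fgkmt3.6}. Here I would invoke the machinery of \cite{FGKMT} essentially verbatim. First use the small primes $s\in\cS$, chosen by the Erdős–Rankin ``greedy/random'' argument (or, in the FGKMT formulation, via the probabilistic model with the random residues $\vec{\ba}$ and the sieve weights coming from the Maynard–Tao construction) to reduce $\cQ\cap S(\vec a)$ to a set of size $O(x/\log x \cdot \log_2 x/\log_3 x)$ or so — more precisely, to show that the ``survivors'' after sifting by $\cS$ number at most a small multiple of $x/\log_2 x$, with good control on how they are distributed. Then the primes $p\in\cP$ (one prime for each residue class we still need to kill, using $\#\cP\asymp x/\log x$ of them) finish the job, knocking the count down to $O(x/\log x)$. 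The one genuinely new point is bookkeeping: at every stage, the residues are selected so as to never cover any element of $\cH$; since $\cH$ has only $K\le\log x$ elements, excluding it changes all the relevant counts and probabilities by a factor $1+O(\log x/\#\cP)=1+o(1)$, so none of the FGKMT estimates are affected.

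Concretely, the steps in order are: (1) For each $s\in\cS$, let $\cH$ determine at most one ``taboo'' residue $r_s\pod s$ (the common class of those $q_i$ divisible-relation, i.e.\ $q_i\equiv r_s$ — actually each $q_i$ gives its own class $q_i\pod s$, so up to $K$ taboo classes), and run the FGKMT sieving-by-$\cS$ argument restricted to the remaining $s-O(K)$ classes; conclude that there is a choice of $\vec a$ with $\cH\subseteq S(\vec a)$ and $\#(\cQ\cap S(\vec a))\ll x\log_2 x/(\log x\log_3 x)$ (the FGKMT bound on the number of $\cS$-unsifted elements of $\cQ$), together with the structural information needed for the next step. (2) Apply the FGKMT result that allows one to sift a set of this size down to $O(x/\log x)$ using one residue class per prime $p\in\cP$, again forbidding the $\le K$ classes containing $\cH$; this yields $\vec b$ with $\cH\subseteq S(\vec b)$ and \eqref{eq:fgkmt3.6}. (3) Combine: $\cH\subseteq S(\vec a)\cap S(\vec b)$ and the count bound both hold, giving the theorem.

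The main obstacle is that the bound on $\#(\cQ\cap S(\vec a))$ after sifting only by the small primes $\cS$ — the ``Erdős–Rankin'' input, Theorem 3.something of \cite{FGKMT} — is itself deep, resting on the Maynard–Tao sieve and the Ford–Green–Konyagin–Tao/Maynard techniques; I would not reprove it but would need to state it in the precise form needed and check that carrying along the exceptional set $\cH$ (and later the set $\cZ$ of unusable primes alluded to in \S\ref{sec:outline}) does not degrade it. The rest — verifying $\cH\subseteq S(\vec a)\cap S(\vec b)$, and checking the $1+o(1)$ losses from excluding $\cH$ — is routine.
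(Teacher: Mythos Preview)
Your proposal is correct and follows essentially the same route as the paper: one runs the FGKMT random construction (packaged here as Theorem~\ref{thm:fgkmt4}) with the residues $a_s$ restricted to $\Omega_{\cH}(s)=(\ZZ/s\ZZ)\setminus\{q_i\pod s:q_i\in\cH\}$ and the sieve weight $w(p,n)$ replaced by $w_{\cH}(p,n)=w(p,n)\ind{\cH\cap n\pod p=\emptyset}$, checks that all moment and concentration estimates survive up to negligible errors because $K\le\log x$, and finishes with the iterative covering lemma (Lemma~\ref{lem:rcb10}). One minor correction: the $\cS$-sieve alone leaves $\#(\cQ\cap S(\vec a))\asymp x\log_2 x/\log x$ survivors (cf.\ \eqref{eq:fgkmt4.30}), not $x\log_2 x/(\log x\log_3 x)$ as you wrote; the remaining factor of $\log_2 x$ is exactly what the covering lemma buys with $m\asymp\log_3 x$ iterations.
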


The only difference between Theorem \ref{thm:fgkmt2} and 
\cite[Theorem 2]{FGKMT} is our additional requirement that 
$\cH \subseteq S(\vec{a}) \cap S(\vec{b})$.
Unsurprisingly, the proof of Theorem \ref{thm:fgkmt2} follows that 
of \cite[Theorem 2]{FGKMT} very closely, even verbatim in many 
parts.
Nevertheless, the details must be checked, and by including them 
here we are also able to point out the minor differences between 
the two proofs.

We now introduce a set $\cZ$ of ``unusable'' primes with the 
property that for any $p' \in \cZ$,
\begin{equation}
 \label{eq:Zsparse}
  \sums[p \ge p'][p \in \cZ]
   \frac{1}{p}
    \ll
     \frac{1}{p'}
      \ll
       \frac{1}{\log x}.
\end{equation}

\begin{corollary}
 \label{cor:thm2}
Let $C$ be a sufficiently large but fixed positive constant.
For all sufficiently large $x$, there exists a vector of residue 
classes 
$(c_p \pod{p})_{p \le Cx, \, p \, \not\in \, \cZ}$ such that 
$
 \cH
  =
   \big(\ZZ \cap (x,y]\big)
    \setminus \, 
     {\textstyle \bigcup_{p \le Cx, \, p \, \not\in \, \cZ}} 
       \, c_p \pod{p}.
$ 
\end{corollary}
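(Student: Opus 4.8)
The idea is to extend the sieving result of Theorem~\ref{thm:fgkmt2} from the primes in $(x,y]$ to \emph{all} integers in $(x,y]$, by using the ``small'' primes $p \le x/2$ together with a handful of large primes to cover everything in $(x,y]$ except the prescribed set $\cH$. First I would apply Theorem~\ref{thm:fgkmt2} to obtain vectors $\vec a = (a_s \pod s)_{s \in \cS}$ and $\vec b = (b_p \pod p)_{p \in \cP}$ with $\cH \subseteq S(\vec a) \cap S(\vec b)$ and $\#(\cQ \cap S(\vec a) \cap S(\vec b)) \ll x/\log x$. Setting $c_s \defeq a_s$ for $s \in \cS$ and $c_p \defeq b_p$ for $p \in \cP$ already removes a large fraction of the integers in $(x,y]$; what survives is contained in the disjoint union of $\cH$ and a ``leftover'' set $\cT \defeq \big((x,y] \cap \ZZ\big) \cap S(\vec a) \cap S(\vec b) \setminus \cH$, which splits as $\cT = (\cQ \cap S(\vec a) \cap S(\vec b)) \setminus \cH$ together with the composite integers in $(x,y]$ surviving the sieve. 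The composite survivors each have a prime factor, and in fact one of size $\le \sqrt{y} \ll x^{1/2+o(1)}$, so they are easy to kill; the real content is that $\#\cT \ll x/\log x$.

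Next I would handle $\cT$ greedily using the remaining primes $p$ in the range $(\log x)^{20} \ge p$ or, more to the point, the medium-to-large primes $p \le Cx$ not already used (i.e.\ $p \notin \cS \cup \cP$) and not in $\cZ$. Since $\#\cT \ll x/\log x$ and each such prime $p$ with $x/2 \ge p$ — wait, those are gone — each prime $p$ with $x < p \le Cx$ lies in at most one residue class meeting $(x,y]$ at a controlled density, so one assigns to each leftover element $t \in \cT$ its own prime $c_p$ chosen so that $p \mid (t - c_p)$, i.e.\ put $c_p \equiv t \pod p$. Because $\#\cT \ll x/\log x$ and the number of available primes in $(x, Cx]$ (minus $\cZ$, which by \eqref{eq:Zsparse} is negligible) is $\gg x/\log x$ with a large constant once $C$ is large enough, there are enough primes to cover $\cT$ injectively. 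For the remaining unused primes $p \le Cx$, $p \notin \cZ$, one chooses $c_p$ to be any residue class disjoint from $(x,y]$ (e.g.\ $c_p \equiv$ something in $(0, x]$ or outside the window), or simply a class containing no element of $(x,y]$, which is possible since $p \le Cx$ but the window $(x,y]$ has length $y - x \asymp x \log x \log_3 x / \log_2 x$; here one instead picks $c_p$ to avoid $\cH$ but that's automatic since $\#\cH = K \le \log x$ is tiny. One must also double-check that no prime in $\cS \cup \cP$ was needed to cover an element of $\cH$ — but that is exactly the condition $\cH \subseteq S(\vec a)\cap S(\vec b)$ built into Theorem~\ref{thm:fgkmt2}, and for the greedily-assigned primes one simply never assigns a class hitting any $q_i \in \cH$, which costs at most $K \ll \log x$ primes out of the $\gg x/\log x$ available.

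The main obstacle is the bookkeeping to ensure that after using $\vec a$, $\vec b$, the exceptional set $\cZ$, and the forbidden classes through $\cH$, there still remain enough primes $p \le Cx$ to cover $\cT$ injectively while keeping $\cH$ entirely uncovered. Quantitatively this reduces to: $\#\cT + \#\cH + \#\cZ' \le \pi(Cx) - \pi(x) - \#\cS - \#\cP$ for the relevant truncation $\cZ'$ of $\cZ$, i.e.\ $O(x/\log x) + O(\log x) + O(1) \le (C-1)x/\log x \,(1+o(1)) - O(x/\log x)$, which holds for $C$ large and absolute, precisely as in \cite[proof of Corollary]{FGKMT}. The composite survivors of the $\vec a$-sieve in $(x,y]$ need a separate line: each is divisible by a prime $\le z$ or is a product of two primes in $(z, \sqrt y]$, and in either case lies in a bounded number of residue classes modulo small primes not in $\cS$, contributing only $O(y/(\log x)^{19})=o(x/\log x)$ to the count, so they merge harmlessly into $\cT$. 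With all pieces assembled, every integer of $(x,y]$ outside $\cH$ lies in some $c_p \pod p$ with $p \le Cx$, $p \notin \cZ$, and no element of $\cH$ does, which is the assertion of Corollary~\ref{cor:thm2}. $\qed$
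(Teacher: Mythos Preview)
Your argument has a genuine gap at the step where you claim $\#\cT \ll x/\log x$ for
\[
 \cT \defeq \big((x,y]\cap\ZZ\big)\cap S(\vec a)\cap S(\vec b)\setminus\cH.
\]
The vectors $\vec a,\vec b$ produced by Theorem~\ref{thm:fgkmt2} are tailored to cover \emph{primes} in $(x,y]$; they do nothing special for composites. In particular the classes $a_s\pod s$ for $s\in\cS$ are \emph{not} the zero classes, so ``$n$ is divisible by some prime $\le z$'' does not imply $n\notin S(\vec a)$. A density count shows $\#\big((x,y]\cap S(\vec a)\big)\asymp\sigma y\asymp x\log_2 x$ (cf.\ \eqref{eq:fgkmt6.11}), while the $\asymp x/\log x$ classes $b_p\pod p$ with $p\in\cP$ remove at most $O(y/\log x)=O(x\log_3 x/\log_2 x)$ integers in total. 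Hence your $\cT$ has $\gg x\log_2 x$ elements, far too many to be covered one-by-one using the $\asymp Cx/\log x$ primes in $(x,Cx]$. Your sentence ``composite survivors \dots\ contribut[e] only $O(y/(\log x)^{19})$'' is therefore unjustified.

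The paper repairs exactly this point before the greedy step. It sets $c_p\defeq 0$ for every prime $p\in\cL\defeq[2,(\log x)^{20}]\cup(z,x/2]$ with $p\notin\cZ$, and only \emph{then} defines the leftover set $\cT$. After this extra sieve one argues by cases: if $n\in\cT$ has a prime factor $p'>x/2$ then $n=mp'$ with $m<2y/x=o(\log x)$, and the choice $c_p=0$ for small $p$ forces $m=1$, so $n\in\cQ\cap S_\cZ(\vec a)\cap S_\cZ(\vec b)$, which is $\ll x/\log x$ by \eqref{eq:QZbnd}; if $n$ has a prime factor in $(z,x/2]$ that factor must lie in $\cZ$, contributing $\ll y/z=o(x/\log x)$; otherwise $n$ is $z$-smooth, and the smooth-number estimate gives $o(x/\log x)$. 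Only with this three-case analysis does one obtain $\#\cT\ll x/\log x$, after which Lemma~\ref{lem:BFM5.1} (which packages your greedy covering, including the admissibility constraint protecting $\cH$) finishes the proof using primes in $(x,Cx]\setminus\cZ$. You also need to first pass from $S(\vec a),S(\vec b)$ to the enlarged $S_\cZ(\vec a),S_\cZ(\vec b)$, since primes in $\cZ$ are unavailable; the paper checks this costs only $O(y/(\log x)^{20})$.
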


We deduce Corollary \ref{cor:thm2} from Theorem \ref{thm:fgkmt2} 
with the aid of Lemma 5.1 of \cite{BFM}, which is as follows.

\begin{lemma}
 \label{lem:BFM5.1}
Let $\sH,\sT$ be sets of integers, $\sP$ a set of 
primes, such that for some $x \ge 2$, 
$\sH \subseteq \sT \subseteq [0,x^2]$ and 
$
 \#\{p \in \sP : p > x\} 
  > 
   \#\sH + \#\sT
$.
If $\sH$ is admissible then there exists a vector of residue 
classes $(\gamma_p \pod{p})_{p \, \in \, \sP}$ such that 
$
 \sH
  = 
   \sT 
    \, \setminus \,
     {\textstyle \bigcup_{p \, \in \, \sP}}\, \gamma_p \pod{p}.
$
\end{lemma}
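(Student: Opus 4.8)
The plan is a greedy covering argument: regard $\sT \setminus \sH$ as the set that must be sieved away and $\sH$ as the set that must survive, and exploit the admissibility of $\sH$ together with the abundance of large primes in $\sP$. Since $\sH \subseteq \sT$, enumerate $\sT \setminus \sH = \{t_1, \dots, t_m\}$, where $m = \#\sT - \#\sH$. I will assign to each $t_i$ its own prime $p_i \in \sP$ with $p_i > x$, reserve the residue class $\gamma_{p_i} \defeq t_i \pod{p_i}$ to knock out $t_i$, and then for every remaining prime $p \in \sP$ simply pick a residue class $\gamma_p$ disjoint from $\sH$ --- which is possible exactly because $\sH$ is not a full system of residues modulo $p$.

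First I would run the greedy selection, for $i = 1, \dots, m$ in turn. Having already chosen distinct primes $p_1, \dots, p_{i-1} \in \sP$, each exceeding $x$, I need a prime $p_i \in \sP$ with $p_i > x$, $p_i \notin \{p_1, \dots, p_{i-1}\}$, and $p_i \nmid \prod_{h \in \sH}(t_i - h)$; the last condition ensures that declaring $\gamma_{p_i} \equiv t_i \pod{p_i}$ removes $t_i$ without catching any $h \in \sH$. The distinctness requirement rules out at most $i - 1$ primes. For the divisibility requirement, each factor $t_i - h$ is nonzero (since $t_i \notin \sH$) and satisfies $|t_i - h| \le x^2$ because $\sT \subseteq [0, x^2]$; hence $\bigl|\prod_{h \in \sH}(t_i - h)\bigr| \le x^{2\#\sH}$, so, as each relevant prime exceeds $x \ge 2$, at most $2\#\sH$ prime factors of this product lie above $x$. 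In total at most $(i - 1) + 2\#\sH \le (m - 1) + 2\#\sH = \#\sT + \#\sH - 1 < \#\sH + \#\sT < \#\{p \in \sP : p > x\}$ of the large primes of $\sP$ are excluded, so a valid $p_i$ exists at every stage. (When $m = 0$ there is simply nothing to do here.)

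Next I would assign the leftover primes of $\sP$ --- those at most $x$, together with the large ones not used above --- a residue class $\gamma_p$ with $\gamma_p \not\equiv h \pod{p}$ for all $h \in \sH$; admissibility of $\sH$ says $\sH$ omits at least one class modulo $p$, so such a $\gamma_p$ exists. It then remains to verify $\sH = \sT \setminus \bigcup_{p \in \sP} \gamma_p \pod{p}$. Every $h \in \sH$ lies in $\sT$, and for each $p \in \sP$ we have $h \notin \gamma_p \pod{p}$: for the reserved primes because $p_i \nmid t_i - h$, and for the rest by the admissibility choice; so $h$ survives. Conversely, each $t_i \in \sT \setminus \sH$ satisfies $t_i \equiv \gamma_{p_i} \pod{p_i}$ and is therefore removed, and no integer outside $\sT$ can be an element of $\sT$ at all. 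This gives both inclusions.

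The only genuinely delicate point is the counting in the greedy step: one must confirm that the single hypothesis $\#\{p \in \sP : p > x\} > \#\sH + \#\sT$ simultaneously absorbs the distinctness cost, which climbs to $m - 1$, and the fixed divisibility cost of at most $2\#\sH$ incurred at every stage. The reason it fits is the identity $m = \#\sT - \#\sH$ coming from $\sH \subseteq \sT$, which turns $m - 1 + 2\#\sH$ into $\#\sT + \#\sH - 1$, just under the budget. Beyond this, the argument is routine; the hypothesis $\sH \subseteq \sT \subseteq [0, x^2]$ is used only to bound $|t_i - h|$, and admissibility only at the very end.
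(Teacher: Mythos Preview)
Your argument is correct. The paper does not actually prove this lemma; it quotes it as Lemma~5.1 of \cite{BFM} and uses it as a black box, so there is no ``paper's own proof'' to compare against here. Your greedy construction---assign to each $t_i\in\sT\setminus\sH$ its own prime $p_i\in\sP$ with $p_i>x$ and $p_i\nmid\prod_{h\in\sH}(t_i-h)$, then use admissibility to choose harmless residue classes for the remaining primes---is exactly the standard proof (and is how \cite{BFM} proves it). One small remark: since each $|t_i-h|\le x^2$ has at most \emph{one} prime factor exceeding $x$, the divisibility condition in fact excludes at most $\#\sH$ large primes rather than $2\#\sH$; your looser bound still fits comfortably under the hypothesis $\#\{p\in\sP:p>x\}>\#\sH+\#\sT$, so this does not affect the argument.
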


\begin{proof}[Deduction of Corollary \ref{cor:thm2}]
We choose $x$, $\vec{a}$ and $\vec{b}$ so that the conclusions 
of Theorem \ref{thm:fgkmt2} hold, and work with the enlarged 
sifted sets
\[
  S_{\cZ}(\vec{a})
  \defeq 
   \ZZ 
    \, \setminus \, 
     \textstyle{\bigcup_{s \, \in \, \cS \, \setminus \, \cZ}} \,
      a_s \pod{s}
\quad 
 \text{and}
  \quad 
  S_{\cZ}(\vec{b})
  \defeq 
   \ZZ 
    \, \setminus \, 
     \textstyle{\bigcup_{p \, \in \, \cP \, \setminus \, \cZ}} \,
      b_p \pod{p}.
\]   
Note that if $n \in S_{\cZ}(\vec{a}) \cap S_{\cZ}(\vec{b})$, then 
either $n \in S(\vec{a}) \cap S(\vec{b})$, 
$n \equiv a_s \pod{s}$ for some $s \in \cS \cap \cZ$ 
or 
$n \equiv b_p \pod{p}$ for some $p \in \cP \cap \cZ$.
Now,  
\[
 \sum_{s \, \in \, \cS \cap \cZ}
  \sums[n \le y][n \equiv a_{s} \pod{s}]
   1
 \, +
    \sum_{p \, \in \, \cP \cap \cZ}
     \sums[n \le y][n \equiv b_{p} \pod{p}]
      1
    \le 
     y
      \sums[p \, \in \, \cZ][p > (\log x)^{20}]
       \frac{1}{p'}
        \ll
          \frac{y}{(\log x)^{20}}
\]
by \eqref{eq:fgkmt3.3}, \eqref{eq:fgkmt3.4} and 
\eqref{eq:Zsparse}.
Thus, the elements of 
$\cQ \cap S_{\cZ}(\vec{a}) \cap S_{\cZ}(\vec{b})$
that are not in 
$\cQ \cap S(\vec{a}) \cap S(\vec{b})$ 
number at most $y/(\log x)^{20} \ll x/(\log x)^{19}$ 
by \eqref{eq:fgkmt3.1}.
We conclude from \eqref{eq:fgkmt3.6} that 
\begin{equation}
 \label{eq:QZbnd}
  \#(\cQ \cap S_{\cZ}(\vec{a}) \cap S_{\cZ}(\vec{b}))
   \ll
    \frac{x}{\log x}.
\end{equation}

Let 
\[
 \cL 
  \defeq 
   \{
     \text{$\ell$ prime} : 
      \ell \in [2,(\log x)^{20}] \cup (z,x/2] 
   \}
\]
so that $\cS \cup \cL \cup \cP$ is a partition of the primes less 
than or equal to $x$.
We define a vector of residue classes 
$(c_p \pod{p})_{p \le x, \, p \not\in \cZ}$ 
by setting 
\begin{align*}
 c_p  
  \defeq 
   \begin{cases}
    a_p  & p \in \cS \, \setminus \, \cZ \\
    b_p  & p \in \cP \, \setminus \, \cZ \\
    0    & p \in \cL \, \setminus \, \cZ. 
   \end{cases}
\end{align*}
Recalling that Theorem \ref{thm:fgkmt2} gives 
$\cH \subseteq S(\vec{a}) \cap S(\vec{b})$, and noting that $\cH$ 
consists of primes larger than $x$ by definition, we see that
\[
 \cH
  \subseteq
  \cT
  \defeq 
   \big(\ZZ \cap (x,y]\big)
    \setminus \, 
     {\textstyle \bigcup_{p \le x, \, p \not\in \cZ}} \,
      c_p \pod{p}.
\]

Now, if $n \in \cT$ then $x < n \le y$ and either 
\begin{enumerate}[label=(\arabic*)]
 \item $n$ is divisible by a prime $p' > x/2$, 
 \item $n$ is divisible by a prime $p' \in (z,x/2]$, or
 \item $n$ is $z$-smooth.
\end{enumerate}

In case (1), $n = mp'$ for some 
$m \le y/p' < 2y/x = o(\log x)$ by \eqref{eq:fgkmt3.1} and so, by 
\eqref{eq:Zsparse}, $m$ is not divisible by any prime in $\cZ$ 
(provided $x$ is sufficiently large, as we assume).
Nor is $m$ divisible by any other prime $p < 2y/x$, since 
such primes are in $\cL \, \setminus \, \cZ$ and $c_{p} = 0$ for 
such primes.
Hence $m = 1$, and $n$ must belong to 
$\cQ \cap S_{\cZ}(\vec{a}) \cap S_{\cZ}(\vec{b})$.
Thus,
\begin{equation}
 \label{eq:(1)bnd}
  \#\{n \in \cT : \text{(1) holds} \} 
   \le 
    \#(\cQ \cap S_{\cZ}(\vec{a}) \cap S_{\cZ}(\vec{b}))
     \ll
      \frac{x}{\log x}
\end{equation}
by \eqref{eq:QZbnd}.
In case (2), the prime $p'$ must belong to $\cZ$, for otherwise 
$c_{p'} = 0$. 
Thus, 
\begin{equation}
 \label{eq:(2)bnd}
  \#\{n \in \cT : \text{(2) holds} \}
 \ll
 y
  \sums[p > z, \, p \in \cZ] 
   \frac{1}{p}
    \ll
     \frac{y}{z}
    = 
     o\Big(\frac{x}{\log x}\Big)
\end{equation}
by \eqref{eq:fgkmt3.2} and \eqref{eq:fgkmt3.1}.
As shown in \cite[Theorem 2 et seq.]{FGKMT}, smooth number 
estimates give 
\begin{equation} 
 \label{eq:(3)bnd}
 \#\{n \in \cT : \text{(3) holds} \}
 = 
  o\Big(\frac{x}{\log x}\Big).
\end{equation}

Combining \eqref{eq:(1)bnd}, \eqref{eq:(2)bnd} and 
\eqref{eq:(3)bnd}, we obtain $K + \#\cT \ll x/\log x$ in view of 
\eqref{eq:Kbnd}.
We may therefore choose our constant $C$ to be large enough so 
that
\[
 \#\{\text{$p$ prime}: p \in (x,Cx], \, p \not\in \cZ \} 
  >
   K + \#\cT.
\] 
(The number of primes in $\cZ$ that belong to $(x,Cx]$ is 
negligible, for 
\[
  \#\{p \in \cZ : p \le Cx\} \ll \log x,
\]
as can be seen from \eqref{eq:Zsparse} [write $1 = p/p$ and sum 
dyadically].)
As $\cH$ is an admissible subset of $\cT \subseteq (x,y]$, we 
must conclude, in view of Lemma \ref{lem:BFM5.1}, that for 
sufficiently large $x$ there exist residue classes $c_p \pod{p}$ 
for $p \in (x,Cx]$, $p \not\in \cZ$, such that 
\[
 \cH
  =
   \cT
    \, \setminus \,
    {\textstyle \bigcup_{p \, \in \, (x,Cx], \, p \, \not\in \, \cZ}} 
      \, c_p \pod{p}
  =
   \big(\ZZ \cap (x,y]\big)
    \setminus \, 
     {\textstyle \bigcup_{p \le Cx, \, p \, \not\in \, \cZ}} 
       \, c_p \pod{p}.
\] 
\end{proof}

In order to prove Theorem \ref{thm:fgkmt2} we must first establish 
the following result, which is analogous to 
\cite[Theorem 4]{FGKMT}. 

\begin{theorem}[Random construction]
 \label{thm:fgkmt4}
Let $x$ be sufficiently large.
There exists a positive number $C$ with 
\begin{equation}
 \label{eq:fgkmt4.29}
  C \asymp \frac{1}{c},
\end{equation}
the implied constants being independent of $c$, a set of 
positive integers $\{h_1,\ldots,h_r\}$ with $r \le \sqrt{\log x}$, 
and random vectors  
$\vec{\ba} = (\ba_s \pod{s})_{s \, \in \, \cS}$ 
and 
$\vec{\bn} = (\bn_p)_{p \, \in \, \cP}$
of residue classes $\ba_s \pod{s}$ and integers $\bn_p$ 
respectively, satisfying the following.
\begin{enumerate}
 \item For every $\vec{a} = (a_s \pod{s})_{s \in \cS}$ in the 
       essential range of $\vec{\ba}$, we have 
       \[
        \cH \cap a_s \pod{s} = \emptyset \quad  (s \in \cS).
       \]
 \item For every $\vec{n} = (n_p \pod{p})_{p \in \cP}$ in the 
       essential range of $\vec{\bn}$, we have 
       \[
        \cH \cap n_p \pod{p} = \emptyset \quad (p \in \cP).
       \]
 \item For every $\vec{a}$ in the essential range of $\vec{\ba}$, 
       we have
       \[
        \PP(q \in \be_p(\vec{a}) \mid \vec{\ba} = \vec{a})
         \le 
          x^{-3/5}
           \quad 
            (p \in \cP),
       \]
       where 
       $
        \be_p(\vec{a}) \defeq \{\bn_p + h_ip : i \le r\}
         \cap 
          \cQ 
           \cap 
            S(\vec{a})
       $.
  \item With probability $1 - o(1)$, we have  
        \begin{equation}
         \label{eq:fgkmt4.30}
          \#(\cQ \cap S(\vec{\ba}))
           \sim 
            80cx\frac{\log_2 x}{\log x}.
        \end{equation}
  \item Call an element $\vec{a}$ in the essential range of 
        $\vec{\ba}$ ``good'' if, for all but at most 
        $\frac{x}{\log x \log_2 x}$ elements 
        $q \in \cQ \cap S(\vec{a})$, one has 
        \begin{equation}
         \label{eq:fgkmt4.31}
          \sum_{p \in \cP}
           \PP(q \in \be_p(\vec{a}) \mid \vec{\ba} = \vec{a})
            =
             C + O_{\le}\bigg(\frac{1}{(\log_2 x)^{2}}\bigg).
        \end{equation}
        Then $\vec{\ba}$ is good with probability $1 - o(1)$.
\end{enumerate}
\end{theorem}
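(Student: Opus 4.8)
The plan is to follow the proof of \cite[Theorem 4]{FGKMT} essentially verbatim, tracking the single extra requirement that the sifted-out residue classes must avoid the fixed admissible set $\cH$. First I would recall the FGKMT random construction: one uses the Maynard--Tao sieve weights to produce, for each prime $p \in \cP$, a random integer $\bn_p$ together with a fixed ``shift set'' $\{h_1,\ldots,h_r\}$ (with $r \le \sqrt{\log x}$, coming from a bounded-gaps admissible tuple) such that the arithmetic progressions $\bn_p + h_i p$ are heavily concentrated on $\cQ \cap S(\vec{\ba})$; simultaneously one picks random residue classes $\ba_s \pmod s$ for $s \in \cS$ (the ``sieve by small primes'' step of Ford--Green--Konyagin--Maynard--Tao that clears most of $\cQ$). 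The key probabilistic inputs are the first- and second-moment estimates for $\#(\cQ \cap S(\vec{\ba}))$, giving concentration around $80cx\log_2 x/\log x$ (item (iv)), and the estimate for $\sum_{p\in\cP}\PP(q \in \be_p(\vec{a}) \mid \vec{\ba}=\vec{a})$, giving concentration around $C \asymp 1/c$ for all but a negligible set of $q$ (item (v)). Item (iii) is the pointwise upper bound $x^{-3/5}$ on $\PP(q \in \be_p(\vec{a})\mid\vec{\ba}=\vec{a})$, which in FGKMT comes from the fact that $\bn_p$ is chosen from a progression of modulus $\gg x^{1-o(1)}$ intersected with a sparse set, so that each individual $q$ is hit with tiny probability; I would reproduce that computation unchanged.

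The only genuinely new point is securing items (i) and (ii), i.e.\ that no $a_s \pmod s$ or $n_p \pmod p$ ever covers a point of $\cH$. For the small primes $s \in \cS$: since $\#\cS$ is small and $\cH$ has $K \le \log x$ elements, for each $s$ the set of residue classes modulo $s$ that meet $\cH$ has size at most $\min(s, K)$, which is a vanishingly small fraction of $s$ provided $s > (\log x)^{20} \ge K$. Hence I would condition the FGKMT choice of $\ba_s$ on avoiding these $O(K)$ ``forbidden'' classes; this changes each relevant probability by a factor $1 + O(K/s) = 1 + O((\log x)^{-19})$, which is absorbed into all the error terms (in particular the $\asymp$ in \eqref{eq:fgkmt4.29} and the asymptotics \eqref{eq:fgkmt4.30}, \eqref{eq:fgkmt4.31} are untouched). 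For the primes $p \in \cP$ (so $x/2 < p \le x$): each element of $\cH$ lies in $\cQ \subseteq (x,y]$ and is itself prime, hence not divisible by $p$, so $q_k \equiv 0 \pmod p$ is impossible; the FGKMT construction defines $\bn_p$ as (essentially) a random residue class that is $0 \bmod$ a specified product, and I would simply arrange that the classes $\bn_p \pmod p$ we use are chosen to avoid the at most $K$ residues $q_k \bmod p$ — again a negligible restriction since $p > x/2 \gg K$ and the relevant conditional probabilities change by $1+O(K/p)=1+o(1)$.

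The order of operations is then: (1) state the FGKMT admissible tuple $\{h_1,\ldots,h_r\}$ and the sieve weights, defining $C$ and checking \eqref{eq:fgkmt4.29}; (2) define $\vec{\bn}$ and $\vec{\ba}$ as in FGKMT but with the two families of forbidden residue classes (those meeting $\cH$) removed from the respective sample spaces, and verify (i), (ii); (3) re-run the pointwise bound for (iii), noting the conditioning costs only a $1+o(1)$ factor; (4) re-run the first/second moment computation for $\#(\cQ\cap S(\vec{\ba}))$ to get (iv); (5) re-run the computation of $\sum_{p\in\cP}\PP(q\in\be_p(\vec a)\mid\vec{\ba}=\vec a)$ and a Markov/Chebyshev argument over $q$ to get (v). I expect the main obstacle to be purely bookkeeping: one must check that \emph{every} estimate in the long FGKMT argument is robust to the two conditionings, i.e.\ that multiplying finitely many probabilities and expectations by $1 + O((\log x)^{-19})$ (resp.\ $1 + O(\log x/x)$) never degrades a main term to an error term or vice versa. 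Since the sifting densities in FGKMT are already only controlled up to $1 + o(1)$ factors, and $K \le \log x$ is far smaller than both $(\log x)^{20}$ and $x/2$, this robustness is clear in principle; the work is to say it carefully at each step, which is why the proof will ``follow that of \cite[Theorem 4]{FGKMT} very closely, even verbatim in many parts.''
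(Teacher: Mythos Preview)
Your proposal is correct and follows essentially the same route as the paper: restrict $\ba_s$ to the classes avoiding $\cH$ (the paper's $\Omega_{\cH}(s)$), replace the FGKMT sieve weight $w(p,n)$ by $w_{\cH}(p,n) \defeq w(p,n)\ind{\cH \cap n \pod{p} = \emptyset}$ so that $\tilde{\bn}_p$ automatically avoids $\cH$, and then rerun the moment computations. One small correction: $\bn_p$ is \emph{not} ``a random residue class that is $0 \bmod$ a specified product'' but a random integer with density proportional to the Maynard--Tao weight $w(p,n)$, so your heuristic ``$1+O(K/p)$'' for the cost of the $\cH$-avoidance on $\cP$ is not justified by $p \gg K$ alone; in the paper this step (Lemma~\ref{lem:rcb2}) instead uses the pointwise bound $w(p,n)=O(x^{1/3+o(1)})$ from Lemma~\ref{thm:fgkmt5}(iii), and one must also note that the analogue of \eqref{eq:fgkmt6.5} holds only for $q\in\cQ\setminus\cH$ (whence the indicator $\ind{\cQ\setminus\cH}(q)$ in \eqref{eq:fgkmt6.9}).
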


\subsection{Proof of Theorem \ref{thm:fgkmt4}}
 \label{subsec:thm4pf}

Let $x,c,y,z,\cS,\cP,\cQ,K$ and $\cH$ be as in 
Theorem \ref{thm:fgkmt4}.
We set
\begin{equation}
 \label{eq:fgkmt6.8}
  r \defeq \lfloor (\log x)^{1/5} \rfloor
\end{equation}
and let $\{h_1,\ldots,h_r\}$ be the admissible set with 
$h_i \defeq (2i-1)^2$ for $i \le r$.
Our first lemma is a special case of \cite[Theorem 5]{FGKMT}.
\begin{lemma}[Existence of a good sieve weight]
 \label{thm:fgkmt5}
There exist positive quantities
\begin{equation}
 \label{eq:fgkmt6.2}
  \tau \ge x^{o(1)} 
   \quad 
    \text{and}
     \quad 
    u \asymp \log_2 x,
\end{equation}
and a function $w : \cP \times \ZZ \to \RR^+$ supported on 
$\cP \times [-y,y]$, satisfying the following.
\begin{enumerate} 
 \item Uniformly for $p \in \cP$,  
\begin{equation}
 \label{eq:fgkmt6.4}
  \sum_{n \in \ZZ} w(p,n)
   =
    \bigg(
       1 + O\bigg(\frac{1}{(\log_2 x)^{10}}\bigg)
    \bigg)
     \tau
      \frac{y}{(\log x)^r}.
\end{equation}
 \item Uniformly for $q \in \cQ$ and $i \le r$,   
\begin{equation}
 \label{eq:fgkmt6.5}
  \sum_{p \in \cP}
   w(p,q - h_ip)
    =
    \bigg(
      1 + O\bigg(\frac{1}{(\log_2 x)^{10}}\bigg)
    \bigg)
     \tau
      \frac{u}{r} 
       \frac{x}{2(\log x)^r}.
\end{equation}  
 \item Uniformly for $(p,n) \in \cP\times \ZZ$,   
\begin{equation}
 \label{eq:fgkmt6.7}
  w(p,n) = O\big(x^{1/3 + o(1)}\big).
\end{equation}
\end{enumerate}
\end{lemma}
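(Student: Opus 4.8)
The plan is to obtain Lemma \ref{thm:fgkmt5} by specializing \cite[Theorem 5]{FGKMT}. That theorem constructs, for a free admissible tuple $\{h_1,\dots,h_r\}$, a free dimension $r$, and free parameters $y$, $z$ and a sieve level $R$ lying in stated ranges, a Maynard--Tao sieve weight of the shape $w(p,n) = \big(\sum_{\vec d} \lambda_{\vec d} \prod_{i \le r} \ind{d_i \mid n + h_i p}\big)^{2}$, the sum running over $z$-smooth squarefree tuples $\vec d = (d_1,\dots,d_r)$ with $\prod_i d_i \le R$, where $\lambda_{\vec d}$ is the standard Maynard weight $\prod_i \mu(d_i)$ times the evaluation of a fixed smooth cutoff $F$ supported on the standard simplex; and it records precisely a main-sum estimate, a shifted-sum estimate carrying a gain of the shape $u/r$ (the per-index efficiency ratio of the sieve), and a pointwise bound. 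So the first step is to fix the dictionary between its parameters and ours: the admissible tuple $\{(2i-1)^2 : i \le r\}$, the dimension $r = \lfloor (\log x)^{1/5} \rfloor$, the smoothness level $z$ of \eqref{eq:fgkmt3.2}, the length $y$ of \eqref{eq:fgkmt3.1}, and the sieve level $R = x^{\theta}$ for the fixed $\theta > 0$ afforded by the currently available level of distribution of the primes.

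The next step is to check that these choices meet the hypotheses of \cite[Theorem 5]{FGKMT}. The one concrete point is admissibility of $\{(2i-1)^2 : i \le r\}$: modulo $2$ each $(2i-1)^2$ is odd, hence $\equiv 1$; and modulo an odd prime $p$, the values $(2i-1)^2$ all lie in $\{0\}\cup\{a^2 : 1\le a\le p-1\}$, which occupies at most $(p+1)/2 < p$ residue classes, so the tuple omits a residue class modulo every prime. One must also note that $r = \lfloor (\log x)^{1/5} \rfloor$ lies in the admissible window: it tends to infinity, so that the efficiency ratio satisfies $u \asymp \log r \asymp \log_2 x$ --- the near-optimal Maynard lower bound, established in the proof of \cite[Theorem 5]{FGKMT} --- which gives the size of $u$ asserted in \eqref{eq:fgkmt6.2} and incidentally $r \le \sqrt{\log x}$; and it grows slowly enough that $(\log x)^r = x^{o(1)}$ and $r \log R = o(\log x)$, so the sieve main terms are valid with relative error $O((\log_2 x)^{-10})$ and, with $R$ a sufficiently small power of $x$, the pointwise bound \eqref{eq:fgkmt6.7} comes out as $O(x^{1/3 + o(1)})$ via the divisor bound. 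Finally $y = x^{1 + o(1)}$ and $z = x^{o(1)}$ fall inside the permitted ranges, and the support of $w$ may be taken inside $\cP \times [-y,y]$ because $|h_i| = O(r^2) = O((\log x)^{2/5})$ is negligible against $y$.

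With the parameters matched, the three conclusions are read off from \cite[Theorem 5]{FGKMT}: \eqref{eq:fgkmt6.4} is the main-sum estimate, uniform in $p \in \cP$; \eqref{eq:fgkmt6.5} is the shifted-sum estimate, uniform in $q \in \cQ$ and $i \le r$, the factor $\tfrac{u}{r} \cdot \tfrac{x}{2}$ combining the per-index efficiency gain with the density of $\cP$ in $(x/2,x]$, with the level-of-distribution input entering exactly here (applied to the forms $q + (h_j - h_i)p$ as $p$ ranges over $\cP$); and \eqref{eq:fgkmt6.7} is the pointwise bound. I expect the bookkeeping in this last step to be the main obstacle: one must confirm that a single choice of $R$ and of the extremal cutoff $F$ simultaneously produces the power $(\log x)^r$, the normalizing constant $\tau = x^{o(1)}$, the relation $u \asymp \log_2 x$, the exponent $1/3$ in \eqref{eq:fgkmt6.7}, and the error term $O((\log_2 x)^{-10})$, and that the Bombieri--Vinogradov-type input used in \cite{FGKMT} for the shifted sum remains available for our $y$, $z$ and $R$. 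Beyond this consistency check no new analytic input is needed, since Lemma \ref{thm:fgkmt5} is contained verbatim in \cite[Theorem 5]{FGKMT}.
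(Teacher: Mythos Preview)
Your proposal is correct and takes essentially the same approach as the paper: the paper does not prove Lemma~\ref{thm:fgkmt5} at all but simply states that it ``is a special case of \cite[Theorem 5]{FGKMT},'' and your sketch just spells out the parameter specialization and hypothesis-checking that this citation entails. Your added detail (admissibility of $\{(2i-1)^2\}$, the size of $r$, the resulting $u \asymp \log_2 x$, etc.) is consistent with the setup in \S\ref{subsec:thm4pf} and with \cite{FGKMT}, so nothing further is needed.
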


We choose $\tau$, $u$ and $w : \cP \times \ZZ \to \RR^+$ 
according to Lemma \ref{thm:fgkmt5}, and define 
$w_{\cH} : \cP \times \ZZ \to \RR^+$ by setting 
\[
 w_{\cH}(p,n) 
  \defeq 
   \begin{cases}
    w(p,n) & \text{if $\cH \cap n \pod{p} = \emptyset$} \\
    0      & \text{otherwise.}  
   \end{cases}
\]

\begin{lemma}
 \label{lem:rcb2}
Statements \textup{(}i\textup{)}, \textup{(}ii\textup{)} and 
\textup{(}iii\textup{)} of Lemma \ref{thm:fgkmt5} all hold with 
$w_{\cH}$ in place of $w$, provided the hypothesis $q \in \cQ$ in 
\textup{(}ii\textup{)} is replaced by the hypothesis 
$q \in \cQ\setminus \cH$.
\end{lemma}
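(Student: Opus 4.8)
The plan is to observe that $w_{\cH}$ differs from $w$ only on a very sparse set of pairs $(p,n)$ — precisely those with $\cH \cap n \pod{p} \neq \emptyset$ — and that zeroing out such a sparse set perturbs none of the three estimates of Lemma \ref{thm:fgkmt5} beyond its own error term. Statement (iii) requires nothing new: since $0 \le w_{\cH}(p,n) \le w(p,n)$ pointwise, the bound $w_{\cH}(p,n) = O(x^{1/3+o(1)})$ is inherited directly from \eqref{eq:fgkmt6.7}. For (i) and (ii) I would write the $w_{\cH}$-sum as the corresponding $w$-sum minus a correction term supported exactly on the pairs where $w$ and $w_{\cH}$ disagree, and bound that correction crudely via \eqref{eq:fgkmt6.7}: every surviving summand there is $O(x^{1/3+o(1)})$, so it remains only to count how many such terms occur.

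First I would record the relevant sizes. By \eqref{eq:fgkmt3.1}, \eqref{eq:fgkmt6.8} and \eqref{eq:fgkmt6.2} one has $y = x^{1+o(1)}$, $(\log x)^r \le \exp\bigl((\log x)^{1/5}\log_2 x\bigr) = x^{o(1)}$, and $\tau \ge x^{o(1)}$, so both main terms $\tau y/(\log x)^r$ and $\tau (u/r) x/(2(\log x)^r)$ are $\gg x^{1-o(1)}$, and in particular exceed $x^{1/3+o(1)}$ by a fixed positive power of $x$. Hence it suffices to show that, for fixed $p \in \cP$ in (i) and for fixed $q \in \cQ\setminus\cH$ and $i \le r$ in (ii), the number of terms on which $w$ and $w_{\cH}$ disagree is $x^{o(1)}$: the ensuing discrepancy $O(x^{1/3+o(1)})$ is then absorbed without loss into the $O\bigl((\log_2 x)^{-10}\bigr)$ relative error already present in \eqref{eq:fgkmt6.4} and \eqref{eq:fgkmt6.5}.

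For (i): if $w_{\cH}(p,n) \ne w(p,n)$ then $w(p,n) \ne 0$, so $|n| \le y$, and $n \equiv q_j \pod{p}$ for some $j \le K$; for each $j$ there are at most $2y/p + 1 \ll y/x \ll \log x$ such $n$ (using $p > x/2$ and \eqref{eq:fgkmt3.1}), hence $\ll K\log x = x^{o(1)}$ of them in all, uniformly in $p$. For (ii): this is where the relaxation to $q \in \cQ\setminus\cH$ enters. If $w_{\cH}(p, q - h_ip) \ne w(p, q - h_ip)$ then $q - h_ip \equiv q_j \pod{p}$ for some $j \le K$, which, since $p \mid h_ip$, is the same as $p \mid q - q_j$. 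Because $q \notin \cH$ we have $q \ne q_j$, so $q - q_j$ is a nonzero integer; and since $q, q_j \in (x,y]$ its absolute value is less than $y$, which for large $x$ is below $(x/2)^2$ by \eqref{eq:fgkmt3.1}. A nonzero integer that small cannot be divisible by two distinct primes exceeding $x/2$, so at most one $p \in \cP$ divides $q - q_j$; summing over $j \le K$ leaves at most $K \le \log x = x^{o(1)}$ exceptional primes $p$, uniformly in $q$ and $i$. This yields (i) and (ii) for $w_{\cH}$, the latter with $q \in \cQ\setminus\cH$ in place of $q \in \cQ$.

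I do not anticipate a genuine obstacle: the lemma asserts only a robustness property of the sieve weight. The one step to handle carefully is (ii) — one must invoke $q \notin \cH$ to know $q - q_j \ne 0$, and then combine $|q - q_j| < y$ with $p > x/2$ to conclude that only at most $K$ primes are removed, uniformly in $q$ and $i$. It is also worth double-checking that the claim that each main term is $\gg x^{1-o(1)}$ genuinely follows from $\tau \ge x^{o(1)}$ and $(\log x)^r = x^{o(1)}$, since that inequality is precisely what licenses discarding the $O(x^{1/3+o(1)})$ discrepancy.
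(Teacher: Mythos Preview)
Your proposal is correct and follows essentially the same approach as the paper: both arguments dispose of (iii) immediately by pointwise domination, and handle (i) and (ii) by bounding the difference $\sum (w - w_{\cH})$ via the pointwise bound \eqref{eq:fgkmt6.7} together with a count of the pairs on which $w$ and $w_{\cH}$ disagree (for (i), the $n \equiv q_j \pod{p}$ with $|n|\le y$; for (ii), the $p \in \cP$ dividing the nonzero integer $q - q_j$). The paper is terser---it writes the corrections as $\ll x^{-2/3+o(1)}y$ and $\ll x^{1/3+o(1)}$ and invokes \eqref{eq:fgkmt6.2} without spelling out that the main terms are $\gg x^{1-o(1)}$---but the substance is identical.
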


\begin{proof}
We only need to consider (i) and (ii).
For every $p \in \cP$ we have 
\begin{align*}
 0 \le \sum_{n \in \ZZ} (w(p,n) - w_{\cH}(p,n))
    \le \sum_{j=1}^K \sums[|n| \le y][n \equiv q_j \pod{p}] w(p,n)
     \ll x^{1/3 + o(1)}y/p
      \ll x^{-2/3 + o(1)}y,
\end{align*}
which gives the analog of \eqref{eq:fgkmt6.4} for $w_{\cH}$ in 
view of \eqref{eq:fgkmt6.2}.
For every $q \in \cQ\setminus \cH$ and $i \le r$, we see 
similarly that 
\begin{align*}
 0 
  & \le 
     \sum_{p \in \cP} (w(p,q - h_ip) - w_{\cH}(p,q - h_ip)) 
  \\
  & \hspace{45pt}
    \le
     x^{1/3 + o(1)}
      \sum_{j=1}^K
       \sum_{q - h_ip \equiv q_j \pod{p}} 1 
    \le 
     x^{1/3 + o(1)}
      \sum_{j=1}^K
       \sum_{p \mid q - q_j} 1 
    \ll
     x^{1/3 + o(1)},
\end{align*}
which gives the analog of \eqref{eq:fgkmt6.5}.
\end{proof}

For each $p \in \cP$, let $\tilde{\bn}_p$ denote the random 
integer with probability density 
\[
  \PP(\tilde{\bn}_p = n) 
   \defeq 
    \frac{w_{\cH}(p,n)}{\sum_{n' \in \ZZ} w_{\cH}(p,n')}
\]
for all $n \in \ZZ$.
Using Lemma \ref{lem:rcb2}, we verify that  
\begin{equation}
 \label{eq:fgkmt6.9}
  \sum_{p \in \cP}
   \PP(q = \tilde{\bn}_p + h_ip)
   =
    \ind{\cQ\setminus\cH}(q)
    \bigg(
     1 + O\bigg(\frac{1}{(\log_2 x)^{10}}\bigg)
    \bigg)
      \frac{u}{r}
       \frac{x}{2y}
  \quad 
   (q \in \cQ, i \le r),
\end{equation}
\begin{equation}
 \label{eq:fgkmt6.10}
  \PP(\tilde{\bn}_p = n) \ll x^{-2/3 + o(1)}
   \quad (p \in \cP, n \in \ZZ)
\end{equation}
and
\begin{equation}
 \label{eq:probnph0}
  \PP(\cH \cap \tilde{\bn}_p \pod{p} \ne \emptyset) = 0
   \quad (p \in \cP).
\end{equation}
By \eqref{eq:fgkmt6.10}, the analog of \eqref{eq:fgkmt6.9} holds 
with a single prime deleted from $\cP$.

We choose the random vector 
$\vec{\ba} \defeq (\ba_s \pod{s})_{s \in \cS}$ by selecting each 
$\ba_s \pod{s}$ uniformly at random from 
\begin{equation}
 \label{eq:defOmega}
  \Omega_{\cH}(s)
   \defeq 
    (\ZZ/s\ZZ)\setminus\{q \pod{s} : q \in \cH\},
\end{equation}
independently in $s$ and independently of the $\tilde{\bn}_p$.
Note, then, that for any random vector $\vec{\ba}$, 
$\cH \subseteq S(\vec{\ba})$.

The sifted set $S(\vec{\ba})$ is a random periodic subset of $\ZZ$ 
with density 
\[
 \sigma_{\cH}
  \defeq 
   \prod_{s \in \cS}
    \bigg(
     1 - \frac{1}{\#\Omega_{\cH}(s)}
    \bigg).
\]
Let us compare $\sigma_{\cH}$ with the quantity 
\[
 \sigma 
  \defeq 
   \prod_{s \in \cS}
    \bigg(
     1 - \frac{1}{s}
    \bigg)
\]
defined in \cite{FGKMT}. 
As $(\log x)^{20} - \log x \le s - K \le \#\Omega_{\cH}(s) \le s$ 
(cf.\ \eqref{eq:fgkmt3.3} and \eqref{eq:Kbnd}), one may verify, in 
a straightforward manner, that 
\begin{equation}
 \label{eq:sigmahatsigma}
  \sigma_{\cH} 
   = 
    \sigma\bigg(1 + O\bigg(\frac{1}{(\log x)^{19}}\bigg)\bigg). 
\end{equation}
Consequently, in the estimates that follow, $\sigma_{\cH}$ and 
$\sigma$ are interchangeable. 

As noted in \cite{FGKMT}, by the prime number theorem (with 
suitably strong error term), \eqref{eq:fgkmt3.2} and 
\eqref{eq:fgkmt3.3}, 
\[
 \sigma 
  = 
   \bigg(
    1 + O\bigg(\frac{1}{(\log_2 x)^{10}}\bigg)
   \bigg)
    \frac{80\log_2 x}{\log x \log_3 x/\log_2 x},
\]
so by \eqref{eq:fgkmt3.1} we have 
\begin{equation}
 \label{eq:fgkmt6.11}
  \sigma y
   =
   \bigg(
    1 + O\bigg(\frac{1}{(\log_2 x)^{10}}\bigg)
   \bigg)    
    80c x\log_2 x.
\end{equation}
Also, by \eqref{eq:fgkmt6.8} we have 
\begin{equation}
 \label{eq:fgkmt6.12}
  \sigma^r = x^{o(1)}.
\end{equation}

Let
\begin{equation}
 \label{eq:fgkmt6.14}
 X_p(\vec{a})
  \defeq 
   \PP(\tilde{\bn}_p + h_ip \in S(\vec{a}) \, \, \hbox{for all} \, \, i \le r)
\end{equation}
and
\begin{equation}
 \label{eq:rcb11a}
  \cP(\vec{a})
     \defeq 
      \bigg\{
       p \in \cP : 
        X_p(\vec{a})
   =
    \bigg(
     1 + O_{\le}\bigg(\frac{1}{(\log x)^{6}}\bigg)
    \bigg)
     \sigma^r
      \bigg\}.
\end{equation}
It will transpire that with probability $1 - o(1)$, most primes in 
$\cP$ lie in $\cP(\vec{\ba})$.

We now define $\bn_p$ in a slightly complicated way.
Let 
\[
 Z_p(\vec{a};n)
  \defeq 
   \ind{
    (n + h_j p \, \in \, S(\vec{a}) \, \, \forall j \le r)
       }
        \PP(\tilde{\bn}_p = n).
\]
Suppose we are in the event that $\vec{\ba} = \vec{a}$.
If $p \in \cP \, \setminus \, \cP(\vec{a})$, we set 
$\bn_p = 0$.
Otherwise, let $\bn_p$ be the random integer with 
\begin{equation}
 \label{eq:rcb11}
  \PP(\bn_p = n \mid \vec{\ba} = \vec{a})
   =
    \frac{Z_p(\vec{a};n)}{X_p(\vec{a})},
\end{equation}
with the $\bn_p$ jointly conditionally independent on the 
event $\vec{\ba} = \vec{a}$. 
(We easily verify that 
$
 \sum_{n \in \ZZ} Z_p(\vec{a};n) = X_p(\vec{a})
$,
so that \eqref{eq:rcb11} makes sense.)

\begin{proof}%
[Deduction of Theorem \ref{thm:fgkmt4} 
 \textup{(}i\textup{)} -- \textup{(}iii\textup{)}]
Let $p \in \cP$.
We claim that 
\[
 \PP(\cH \cap \bn_p \pod{p} \ne \emptyset) = 0.
\]
To prove the claim it suffices to show 
$
 \PP(\cH \cap \bn_p \pod{p} \ne \emptyset \mid \vec{\ba} = \vec{a}) 
  = 0
$
for every $\vec{a}$.
This is easily checked if $p \not\in \cP(\vec{a})$, since 
$\cH \cap 0 \pod{p} \subseteq \cQ \cap 0 \pod{p} = \emptyset$; 
otherwise 
\[
 \PP(\cH \cap \bn_p \pod{p} \ne \emptyset \mid \vec{\ba} = \vec{a})
  \le 
   \frac{\PP(\cH \cap \tilde{\bn}_p \pod{p} \ne \emptyset)}
        {X_p(\vec{a})}
    =
     0.
\]

We see that Theorem \ref{thm:fgkmt4} (i) and (ii) hold and we can 
now prove Theorem \ref{thm:fgkmt4} (iii).
Given $\vec{a}$ in the essential range of $\vec{\ba}$, and 
$q \in \cQ \cap S(\vec{a})$, we have 
\[
 \PP(q \in \be_p(\vec{a}) \mid \vec{\ba} = \vec{a})
  \le 
   \PP( \bn_p + h_ip = q  \, \, \hbox{for some} \, \, i \le r\mid \vec{\ba} = \vec{a}).
\]
The right-hand side is $0$ if $p \not\in \cP(\vec{a})$.
Otherwise,
\begin{align*}
 \PP(q \in \be_p(\vec{a}) \mid \vec{\ba} = \vec{a})
  & \le 
   r \max_{n \in \ZZ} \PP(\bn_p = n \mid \vec{\ba} = \vec{a})  
    \ll
     r\sigma^{-r} \max_{n \in \ZZ} \PP(\tilde{\bn}_p = n) \\
   &   \ll
       x^{-2/3 + o(1)}.
\end{align*}
(Here, we have used \eqref{eq:fgkmt6.8}, \eqref{eq:fgkmt6.10} and 
\eqref{eq:fgkmt6.12}.)
\end{proof}

The following lemma is analogous to \cite[Lemma 6.1]{FGKMT}.

\begin{lemma}
 \label{lem:rcb4}
Let $n_1,\ldots,n_t$ be distinct integers of magnitude $x^{O(1)}$, 
$t \le \log x$, such that 
$\cH \cap \{n_1,\ldots,n_t\} = \emptyset$.
Then for all sufficiently large $x$, 
\[
 \PP(n_1,\ldots,n_t \in S(\vec{\ba}))
  =
   \bigg(1 + O\bigg(\frac{1}{(\log x)^{16}}\bigg)\bigg)
    \sigma^t,
\]
where the implied constant is absolute.
\end{lemma}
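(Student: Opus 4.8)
The plan is to mimic the proof of \cite[Lemma 6.1]{FGKMT}, exploiting the independence of the $\ba_s \pod s$ across $s \in \cS$, and correcting for the fact that here $\ba_s$ is chosen uniformly from $\Omega_{\cH}(s)$ rather than from all of $\ZZ/s\ZZ$. First I would write
\[
 \PP(n_1,\ldots,n_t \in S(\vec{\ba}))
  =
   \prod_{s \in \cS}
    \PP\big(\text{$s \nmid n_i - \ba_s$ for all $i \le t$}\big)
  =
   \prod_{s \in \cS}
    \bigg(1 - \frac{\nu_{\cH}(s)}{\#\Omega_{\cH}(s)}\bigg),
\]
where $\nu_{\cH}(s) \defeq \#\{\,a \in \Omega_{\cH}(s) : a \equiv n_i \pod s \text{ for some } i \le t\,\}$, the factorization being valid because the $\ba_s$ are independent. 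Since $\cH \cap \{n_1,\ldots,n_t\} = \emptyset$, every residue class $n_i \pod s$ that is hit lies in $\Omega_{\cH}(s)$ unless $n_i \equiv q_j \pod s$ for some $q_j \in \cH$; in any case $\nu_{\cH}(s) \le t$, and $\nu_{\cH}(s)$ equals the number of distinct residues among $n_1 \pod s,\ldots,n_t \pod s$ minus those coinciding with an element of $\cH \pod s$. For the main term one wants $\nu_{\cH}(s) = t$ for all but a negligible set of $s$: the $n_i$ are distinct integers of size $x^{O(1)}$, so $n_i \equiv n_{i'} \pod s$ or $n_i \equiv q_j \pod s$ can happen for at most $O(\log x)$ primes $s$ (each such $s$ divides one of the $O((\log x)^2)$ nonzero integers $n_i - n_{i'}$ or $n_i - q_j$, each of size $x^{O(1)}$, hence has $O(\log x/\log_2 x)$ prime divisors), all of which exceed $(\log x)^{20}$ and so contribute a factor $1 + O((\log x)^{-20})$ each.

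Next I would compare the resulting product with $\sigma^t = \prod_{s \in \cS}(1 - 1/s)^t$. Using \eqref{eq:sigmahatsigma}-type bounds, namely $\#\Omega_{\cH}(s) = s(1 + O((\log x)^{-19}))$ uniformly for $s \in \cS$ (valid since $s > (\log x)^{20}$ and $K \le \log x$), one gets, for those $s$ with $\nu_{\cH}(s) = t$,
\[
 1 - \frac{t}{\#\Omega_{\cH}(s)}
  =
   \bigg(1 - \frac ts\bigg)
    \bigg(1 + O\bigg(\frac{t}{s (\log x)^{19}}\bigg)\bigg)
  =
   \bigg(1 - \frac 1s\bigg)^t
    \bigg(1 + O\bigg(\frac{t^2}{s^2}\bigg) + O\bigg(\frac{t}{s(\log x)^{19}}\bigg)\bigg),
\]
the last step by expanding both sides (legitimate since $t \le \log x \ll s^{1/10}$). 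Taking the product over $s \in \cS$ and using $\sum_{s > (\log x)^{20}} 1/s^2 \ll (\log x)^{-20}$ and $\sum_{s \le z} 1/s \ll \log_2 x$ (so $\sum_{s} t/(s(\log x)^{19}) \ll \log x \cdot \log_2 x \cdot (\log x)^{-19} \ll (\log x)^{-17}$), together with the $O((\log x)^{-20})$-per-prime contribution from the $O(\log x)$ exceptional $s$ where $\nu_{\cH}(s) < t$, yields the claimed total error $O((\log x)^{-16})$.

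I do not expect a serious obstacle here; the argument is essentially bookkeeping. The one point needing a little care is confirming that the set of "bad" primes $s$ (those for which two of $n_1,\ldots,n_t,q_1,\ldots,q_K$ are congruent mod $s$) really is of size $O(\log x)$ and that each bad prime individually perturbs the product by at most $1 + O((\log x)^{-20})$ — this follows since such an $s$ exceeds $(\log x)^{20}$ while the factor $1 - \nu_{\cH}(s)/\#\Omega_{\cH}(s)$ differs from $(1-1/s)^t$ by $O(t/s) = O((\log x)^{-19})$, and there are $O(\log x)$ of them, contributing $O((\log x)^{-18})$ in total, comfortably inside the stated bound. (One should also record that $\#\cS$ itself does not distort matters: $\#\cS \ll z \ll x^{o(1)}$, but the relevant sums $\sum 1/s$ and $\sum 1/s^2$ converge fast enough that the product is controlled as above.) With these points checked, multiplying out gives exactly $(1 + O((\log x)^{-16}))\sigma^t$, and the implied constant is absolute since all the estimates used ($K \le \log x$, $t \le \log x$, $s > (\log x)^{20}$, sizes $x^{O(1)}$) are.
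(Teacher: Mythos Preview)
Your approach is essentially identical to the paper's: factorize the probability over $s \in \cS$ using independence, isolate the ``bad'' primes where residues among $n_1,\ldots,n_t,q_1,\ldots,q_K$ collide modulo $s$, and compare the remaining product with $\sigma^t$. There is, however, an arithmetic slip in your count of the bad primes. You assert that the set of $s$ with $\nu_{\cH}(s) < t$ has size $O(\log x)$, but your own parenthetical gives $O((\log x)^2)$ nonzero differences $n_i - n_{i'}$ and $n_i - q_j$, each having $O(\log x/\log_2 x)$ prime divisors; multiplying, the correct bound is $O((\log x)^3/\log_2 x)$ (the paper records the cruder $O((\log x)^3)$). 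With this corrected count, the contribution from the bad primes is $O((\log x)^3) \cdot O((\log x)^{-19}) = O((\log x)^{-16})$ rather than the $O((\log x)^{-18})$ you quote; this is exactly the error term claimed in the lemma, so the proof still goes through, but the extra margin you describe is not actually there.
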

\begin{proof}
For $s \in \cS$, let 
$
 t_{s}
  \defeq 
  \#(\Omega_{\cH}(s) \cap \{n_1 \pod{s},\ldots,n_t \pod{s}\}) 
$.
We have 
\[
 \PP(n_1,\ldots,n_t \in S(\vec{\ba}))
  =
  \prod_{s \in \cS}
   \bigg(1 - \frac{t_{s}}{\#\Omega_{\cH}(s)}\bigg).
\]
%
%
Note that for $s \in \cS$, 
$
 1 - t_{s}/\#\Omega_{\cH}(s)
  = 
   1 + O\br{t/s}
    =
    1 + O\br{1/(\log x)^{19}}
$.

Let $\cS'$ be the set of primes $s \in \cS$ such that either 
$n_i \equiv n_j \pod{s}$ for some $i \ne j$ or 
$\cH \cap n_i \pod{s} \ne \emptyset$ for some $i$.
For $i \ne j$ we have $1 \le |n_i - n_j| \ll x^{O(1)}$, so 
$n_i - n_j$ has at most $O(\log x)$ prime divisors.
Similarly, for each $i$ and $j$, $n_i - q_j$ has at most 
$O(\log x)$ prime divisors.
We see that $|\cS'| \ll (t^2 + tK)\log x \ll (\log x)^3$ 
(cf.\ \eqref{eq:Kbnd}), and 
\begin{align*}
 & 
 \prod_{s \in \cS'}
  \bigg(1 - \frac{t}{\#\Omega_{\cH}(s)}\bigg)^{-1}
   \bigg(1 - \frac{t_{s}}{\#\Omega_{\cH}(s)}\bigg)
 \\
 & \hspace{30pt} 
  =
   \prod_{s \in \cS'}
    \bigg(1 + O\bigg(\frac{1}{(\log x)^{19}}\bigg)\bigg)^{|\cS'|}
  =
   1 + O\bigg(\frac{1}{(\log x)^{16}}\bigg).
\end{align*}
For $s \in \cS \, \setminus \, \cS'$ we have $t_{s} = t$.
Thus, 
\begin{align*}
 \prod_{s \in \cS}
  \bigg(1 - \frac{t_{s}}{\#\Omega_{\cH}(s)}\bigg)
 & = 
    \bigg(1 + O\bigg(\frac{1}{(\log x)^{16}}\bigg)\bigg)
     \prod_{s \in \cS}
      \bigg(1 - \frac{t}{\#\Omega_{\cH}(s)}\bigg)
 \\
 & =
     \bigg(1 + O\bigg(\frac{1}{(\log x)^{16}}\bigg)\bigg) 
      \sigma^t
      \prod_{s \in \cS}
       \bigg(1 + O\bigg(\frac{t^2}{s^2}\bigg)\bigg) 
 \\
 & = 
  \sigma^t
   \bigg(1 + O\bigg(\frac{1}{(\log x)^{16}}\bigg)\bigg).
\end{align*}
\end{proof}

\begin{proof}%
[Deduction of Theorem \ref{thm:fgkmt4} \textup{(}iv\textup{)}]
Let $\cR \defeq \cQ\setminus \cH$.
Recalling \eqref{eq:cHdef} and \eqref{eq:Kbnd}, we have   
\begin{equation}
 \label{eq:rcb13}
 \#(\cQ \cap S(\vec{\ba})) 
  = 
  \#(\cR \cap S(\vec{\ba})) + K 
   =
    \#(\cR \cap S(\vec{\ba})) + O_{\le}(\log x).
\end{equation}
Let 
$
 X 
  \defeq 
   \sum_{q \in \cR} \ind{q \in S(\vec{\ba})}.  
$
We have
\[
  \EE X 
   =
    \sum_{q \in \cR}
     \PP(q \in S(\vec{\ba}))
   =
    (\#\cR)\sigma
      \bigg(1 + O\bigg(\frac{1}{(\log x)^{16}}\bigg)\bigg)
\]
from Lemma \ref{lem:rcb4}.
Note that by \eqref{eq:Qsize} we have 
\[
 (\#\cQ)\sigma
  = 
    \frac{\sigma y}{\log x}
     \bigg(1 + O\bigg(\frac{\log_2 x}{\log x}\bigg)\bigg).
\]
By \eqref{eq:rcb13}, the same estimate holds for $(\#\cR)\sigma$,
and
\begin{equation}
 \label{eq:rcb14}
  \EE \#(\cQ \cap S(\vec{\ba}))
   =
    \frac{\sigma y}{\log x}
     \bigg(1 + O\bigg(\frac{\log_2 x}{\log x}\bigg)\bigg).
\end{equation}
We similarly have 
\begin{align*}
 \EE X^2
  & =
   \sum_{q_1 \in \cR}
    \sum_{q_2 \in \cR}
     \ind{q_1,q_2 \in S(\vec{\ba})}
 \\
  & = 
   \sums[(q_1,q_2) \in \cR^2][q_1 \ne q_2]
    \sigma^2
     \bigg(1 + O\bigg(\frac{1}{(\log x)^{16}}\bigg)\bigg)
   + 
     \sum_{q \in \cR}
      \sigma
       \bigg(1 + O\bigg(\frac{1}{(\log x)^{16}}\bigg)\bigg)
  \\
  & = 
   \sigma^2(\#\cR)^2
    \bigg(1 + O\bigg(\frac{1}{(\log x)^{16}}\bigg)\bigg).
\end{align*}
Thus,  
\begin{equation}
 \label{eq:rcb15}
  \EE (X - \EE X)^2 
   =
    \EE X^2 - (\EE X)^2
   \ll
    \frac{\sigma^2(\#\cR)^2}{(\log x)^{16}}.
\end{equation}

Now we use Chebyshev's inequality: 
\begin{align}
 \begin{split}
  \label{eq:rcb16}
  \PP\big(|X - \EE X| > (\#\cR)\sigma(\log x)^{-3}\big)
   & 
    \le 
    (\#\cR)^{-2}\sigma^{-2}(\log x)^6 \, \EE (X - \EE X)^2
  \\
  & 
   \ll
    \frac{1}{(\log x)^{10}}.
 \end{split}
\end{align}
Recalling \eqref{eq:rcb13} again we get 
\[
 \#(\cQ \cap S(\vec{\ba}))
  =
   \frac{\sigma y}{\log x}
    \bigg(1 + O\bigg(\frac{\log_2 x}{\log x}\bigg)\bigg)
\]
with probability $1 - O((\log x)^{-10})$, and 
Theorem \ref{thm:fgkmt4} (iv) follows on recalling that, by 
\eqref{eq:fgkmt6.11}, 
$
 \sigma y/\log x \sim 80cx \log_2 x /\log x.
$
\end{proof}

\begin{lemma}
 \label{lem:rcb5}
\textup{(}i\textup{)}
With probability 
$1 - O\big(\frac{1}{\log x}\big)$, 
$\cP(\vec{\ba})$ contains all but 
$O\big(\frac{\#\cP}{(\log x)^3}\big)$ 
of the primes in $\cP$.
\textup{(}ii\textup{)}
We have 
\[
 \EE \, \#\cP(\vec{\ba}) 
  = (\#\cP)
     \bigg(
      1 + O\bigg(\frac{1}{(\log x)^4}\bigg)
     \bigg).
\]
\end{lemma}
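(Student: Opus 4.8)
The goal is to control the random set $\cP(\vec{\ba})$ defined in \eqref{eq:rcb11a}, i.e.\ the primes $p \in \cP$ for which $X_p(\vec{\ba})$ is within $1 + O_{\le}((\log x)^{-6})$ of $\sigma^r$. The strategy, following \cite[Lemma 6.2]{FGKMT} \emph{mutatis mutandis}, is a first- and second-moment argument on the random variable $X_p(\vec{\ba})$ itself. First I would fix $p \in \cP$ and observe that, since $\tilde{\bn}_p$ is independent of $\vec{\ba}$, conditioning on $\tilde{\bn}_p = n$ gives
\[
 \EE X_p(\vec{\ba})
  =
   \sum_{n} \PP(\tilde{\bn}_p = n)\, \PP(n + h_1 p, \ldots, n + h_r p \in S(\vec{\ba})).
\]
For each $n$ in the support of $\tilde{\bn}_p$ we have $\cH \cap n \pod{p} = \emptyset$ by \eqref{eq:probnph0}, and the $r$ integers $n + h_i p$ are distinct of magnitude $x^{O(1)}$ with $r \le \sqrt{\log x} \le \log x$; crucially, because $w_{\cH}$ kills any $n$ with $\cH \cap n\pod{p}\neq\emptyset$ and the $h_i p$ are tiny compared to the primes in $\cH$, one checks $\cH \cap \{n + h_1 p, \ldots, n + h_r p\} = \emptyset$ as well (here one uses that $\cH \subseteq \cQ \subseteq (x,y]$ consists of primes exceeding $x$, while $n+h_ip$ ranges over $[-y,y]$, and a short divisibility count handles the finitely many coincidences). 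Hence Lemma \ref{lem:rcb4} applies to each such $r$-tuple and yields $\PP(n + h_1 p, \ldots, n + h_r p \in S(\vec{\ba})) = (1 + O((\log x)^{-16}))\sigma^r$ uniformly, so $\EE X_p(\vec{\ba}) = (1 + O((\log x)^{-16}))\sigma^r$.

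For the variance I would compute $\EE X_p(\vec{\ba})^2$ by introducing an independent copy $\tilde{\bn}_p'$ of $\tilde{\bn}_p$ and writing $\EE X_p(\vec{\ba})^2$ as a double sum over $(n,n')$ of $\PP(\tilde{\bn}_p = n)\PP(\tilde{\bn}_p' = n')$ times $\PP$ of the $2r$ integers $n + h_i p$, $n' + h_i p$ all lying in $S(\vec{\ba})$. When these $2r$ integers are distinct — which fails only when $n \equiv n' \pod{s}$ for some $s \in \cS$ or when $n - n'$ is one of $O(1)$ small multiples of differences $h_i - h_j$ — Lemma \ref{lem:rcb4} gives the main term $(1 + O((\log x)^{-16}))\sigma^{2r}$. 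The diagonal and near-diagonal contributions are bounded using \eqref{eq:fgkmt6.10}, which says $\PP(\tilde{\bn}_p = n) \ll x^{-2/3 + o(1)}$, so that even summing crudely over $n$ (of which there are $x^{O(1)}$) the off-main-term mass is $\ll x^{-2/3 + o(1)} \cdot \sigma^r \cdot x^{o(1)} = o(\sigma^{2r}(\log x)^{-C})$ for any $C$, recalling $\sigma^r = x^{o(1)}$ from \eqref{eq:fgkmt6.12}. This yields $\EE X_p(\vec{\ba})^2 = (1 + O((\log x)^{-16}))\sigma^{2r}$, hence $\mathrm{Var}\, X_p(\vec{\ba}) \ll \sigma^{2r}(\log x)^{-16}$.

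From here Chebyshev's inequality, applied for each fixed $p$, gives
\[
 \PP\big(|X_p(\vec{\ba}) - \sigma^r| > \sigma^r (\log x)^{-6}\big)
  \ll
   (\log x)^{12} \cdot \frac{\sigma^{2r}(\log x)^{-16}}{\sigma^{2r}}
    =
     O\big((\log x)^{-4}\big),
\]
after absorbing the $O((\log x)^{-16})$ discrepancy between $\EE X_p(\vec{\ba})$ and $\sigma^r$ into the deviation (it is far smaller than $(\log x)^{-6}$). Thus $\PP(p \notin \cP(\vec{\ba})) = O((\log x)^{-4})$ for each $p \in \cP$. Summing over $p \in \cP$ gives part (ii), $\EE\,\#(\cP \setminus \cP(\vec{\ba})) = O(\#\cP \, (\log x)^{-4})$, equivalently $\EE\,\#\cP(\vec{\ba}) = \#\cP(1 + O((\log x)^{-4}))$. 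For part (i), Markov's inequality applied to the nonnegative integer-valued $\#(\cP \setminus \cP(\vec{\ba}))$ gives $\PP\big(\#(\cP \setminus \cP(\vec{\ba})) > \#\cP (\log x)^{-3}\big) \le (\log x)^3 \cdot O((\log x)^{-4}) = O((\log x)^{-1})$, which is exactly the assertion of (i).

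**Main obstacle.** The routine moment computations are standard; the one genuinely delicate point is the combinatorial/arithmetic bookkeeping needed to certify that the relevant tuples of integers $n + h_i p$ (and, in the second moment, $n + h_i p$, $n' + h_j p$ together) are \emph{distinct} and \emph{disjoint from $\cH$}, so that Lemma \ref{lem:rcb4} is actually applicable with its stated error term — and then showing the exceptional $(n,n')$ pairs contribute negligibly. This is where the hypotheses $r \le \sqrt{\log x}$, the smallness \eqref{eq:fgkmt6.10} of the densities of the $\tilde{\bn}_p$, and the fact that $\cH$ consists of $\le \log x$ primes exceeding $x$ all have to be used in concert; it is the analogue of the corresponding step in \cite[\S6]{FGKMT}, and no new idea beyond careful verification is required.
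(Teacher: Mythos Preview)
Your approach is essentially identical to the paper's: compute the first and second moments of $X_p(\vec{\ba})$ via Lemma~\ref{lem:rcb4}, apply Chebyshev to get $\PP(p\notin\cP(\vec{\ba}))\ll(\log x)^{-4}$, then sum for (ii) and apply Markov for (i). Two small slips in your sketch worth tidying: disjointness of $\{n+h_ip:i\le r\}$ from $\cH$ is immediate from $h_ip\equiv 0\pmod p$ together with the definition of $w_{\cH}$ (so $\PP(\tilde{\bn}_p=n)=0$ whenever some $n+h_ip\in\cH$; no size comparison is needed), and in the second moment the $2r$ integers fail to be distinct precisely when $n'-n=(h_j-h_i)p$ for some $i,j$, giving at most $r^2$ bad values of $n'$ for each $n$ --- the congruence $n\equiv n'\pmod s$ plays no role here.
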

\begin{proof}
We have 
\begin{align*}
 \EE \, X_p(\vec{\ba})
  & = 
   \sum_{n \in \ZZ}
    \PP(\tilde{\bn}_p = n)
     \PP(n + h_i p \in S(\vec{\ba}) \,\, \forall  i \le r)
 \\
 & = 
  \sum_{n \in \ZZ} 
   \PP(\tilde{\bn}_p = p)
    \sigma^r
     \bigg(1 + O\bigg(\frac{1}{(\log x)^{16}}\bigg)\bigg) 
 \\
 & = 
  \sigma^r
   \bigg(1 + O\bigg(\frac{1}{(\log x)^{16}}\bigg)\bigg).
\end{align*}
(For the second step, we supplement Lemma \ref{lem:rcb4} with 
the observation that $\PP(\bn_p = n) = 0$ whenever 
$n + h_ip \in \cH$ for some $i \le r$.) 

Let $\tilde{\bn}_p^{(1)}$ and $\tilde{\bn}_p^{(2)}$ be independent 
random variables having the same probability distribution as 
$\tilde{\bn}_p$.
Then 
\begin{align*}
 X_p(\vec{\ba})^2
  & = 
   \PP\big(\tilde{\bn}_p^{(1)} \in S(\vec{\ba}) \,\, \forall  i \le r\big)
    \PP\big(\tilde{\bn}_p^{(2)} \in S(\vec{\ba}) \,\, \forall  i \le r\big)
  \\
  & = 
   \PP\big(\tilde{\bn}_p^{(l)} \in S(\vec{\ba}) \,\, \forall l \le 2, i \le r\big).
\end{align*}
Arguing as above, 
\[
 \EE X_p(\vec{\ba})^2
  = 
   \sum_{n_1 \in \ZZ}
    \sum_{n_2 \in \ZZ} 
     \PP\big(\tilde{\bn}_p^{(1)} = n_1\big)
      \PP\big(\tilde{\bn}_p^{(2)} = n_2\big)
       \sigma^{t(n_1,n_2)}
         \bigg(1 + O\bigg(\frac{1}{(\log x)^{16}}\bigg)\bigg),
\]
where $t(n_1,n_2)$ is the number of distinct integers 
$n_l + h_ip$ ($l \le 2$, $i \le r$).

Now fix $n_1$.
There are less than $r^2$ values of $n_2$ for which 
$t(n_1,n_2) \ne 2r$.
Since $\PP(\bn_p^{(2)} = n_2) \ll x^{-2/3 + o(1)}$ (cf.\ 
\eqref{eq:fgkmt6.10}), we obtain
\begin{align*}
 &
  \EE X_p(\vec{\ba})^2
 \\
 & \hspace{15pt}
 =
   \sum_{n_1 \in \ZZ}
    \PP(\tilde{\bn}_p^{(1)} = n_1)
     \bigg\{
         \sigma^r 
          \bigg(1 + O\bigg(\frac{1}{(\log x)^{16}}\bigg)\bigg)
           \bigg(1 - O\bigg(\frac{1}{x^{1/2}}\bigg)\bigg)
          + O\bigg(\frac{1}{x^{1/2}}\bigg)
     \bigg\}
 \\
 & \hspace{15pt}
  = 
   \sigma^{2r}
    \bigg(1 + O\bigg(\frac{1}{(\log x)^{16}}\bigg)\bigg).
\end{align*}
Arguing as in \eqref{eq:rcb15}, \eqref{eq:rcb16}, 
\[
 \PP
  \Big(
      |
       X_p(\vec{\ba}) 
        - \big(
           1 + O\big({\textstyle \frac{1}{(\log x)^{16}}}\big)
          \big) 
           \sigma^r
      |
       > 
         {\textstyle 
           \frac{1}{2} 
            \frac{{\displaystyle \sigma^r}}{(\log x)^6}}
  \Big)
   \ll
    \frac{1}{(\log x)^4}.
\]
Thus, with probability 
$1 - O\big(1/(\log x)^4\big)$, 
we have 
\[
 X_p(\vec{\ba})
  =
   \bigg(
    1 + O_{\le}\bigg(\frac{1}{(\log x)^6}\bigg)
   \bigg) 
    \sigma^r,
\]
that is, $p \in \cP(\vec{\ba})$.
Moreover, 
\begin{align*}
 & 
 \frac{\#\cP}{(\log x)^3}
  \PP
   \Big(
        {\textstyle \sum_{p \in \cP} } 
         \ind{p \not\in \cP(\vec{\ba})}
        >
          {\textstyle\frac{\#\cP}{(\log x)^3}}
   \Big)
 \\
 & \hspace{60pt}
  \le
   \EE \, 
    \Big(
     \sum_{p \in \cP}  
      \ind{p \not\in \cP(\vec{\ba})}
    \Big)
  = 
   \sum_{p \in \cP}
    \PP(p \not\in \cP(\vec{\ba}))
  \ll
   \frac{\#\cP}{(\log x)^4}.
\end{align*}
So with probability $1 - \big(1/\log x\big)$, 
$\cP(\vec{\ba})$ contains all but 
$O\big(\frac{\#\cP}{(\log x)^3}\big)$ of the primes $p \in \cP$.
Finally, 
\[
 \EE \, \#\cP(\vec{\ba})
  = 
   \sum_{p \in \cP}
    \PP(p \in \cP(\vec{\ba}))
  =
   (\#\cP)
     \bigg(1 + O\bigg(\frac{1}{(\log x)^4}\bigg)\bigg).
\]
\end{proof}

\begin{lemma}
 \label{lem:rcb6}
Let $q \in \cQ$ and let $\vec{a}$ be in the essential range of 
$\vec{\ba}$.
Then
\begin{equation}
 \label{eq:rcb17}
  \sigma^{-r}
   \sum_{i=1}^r 
    \sum_{p \, \in \, \cP(\vec{a})}
     Z_p(\vec{a};q - h_ip)
    =
     \big(1 + O\bigg(\frac{1}{(\log x)^6}\bigg)\bigg)
      \sum_{p \, \in \, \cP}
       \PP(q \in \be_p(\vec{a}) \mid \vec{\ba} = \vec{a}).
\end{equation}
\end{lemma}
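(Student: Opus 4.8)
The plan is to unwind the definition of $\be_p(\vec a)$ and match the left-hand side term by term against the right-hand side. Recall that for $p \in \cP(\vec a)$ the conditional law of $\bn_p$ is $\PP(\bn_p = n \mid \vec{\ba} = \vec a) = Z_p(\vec a; n)/X_p(\vec a)$, while for $p \notin \cP(\vec a)$ we have $\bn_p = 0$ deterministically, so that $q \in \be_p(\vec a)$ is then impossible (as $0 \notin \cQ$). Hence only primes $p \in \cP(\vec a)$ contribute to $\sum_{p\in\cP}\PP(q \in \be_p(\vec a)\mid \vec{\ba}=\vec a)$. For such $p$, the event $q \in \be_p(\vec a)$ is, by the definition $\be_p(\vec a) = \{\bn_p + h_i p : i \le r\} \cap \cQ \cap S(\vec a)$, precisely the event that $\bn_p + h_i p = q$ for some $i \le r$ (the constraints $q \in \cQ$ and $q \in S(\vec a)$ are already imposed on the ambient $q$). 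Since the $h_i$ are distinct and $p$ is fixed, the values $q - h_i p$ are distinct, so these events for different $i$ are disjoint, and
\[
 \PP(q \in \be_p(\vec a) \mid \vec{\ba}=\vec a)
  = \sum_{i=1}^r \PP(\bn_p = q - h_i p \mid \vec{\ba}=\vec a)
  = \sum_{i=1}^r \frac{Z_p(\vec a; q - h_i p)}{X_p(\vec a)}.
\]

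Next I would insert the estimate for $X_p(\vec a)$ valid on $\cP(\vec a)$. By the very definition \eqref{eq:rcb11a} of $\cP(\vec a)$, for every $p \in \cP(\vec a)$ we have $X_p(\vec a) = (1 + O_{\le}((\log x)^{-6}))\sigma^r$, hence
\[
 \frac{1}{X_p(\vec a)} = \sigma^{-r}\Bigl(1 + O\Bigl(\tfrac{1}{(\log x)^6}\Bigr)\Bigr),
\]
uniformly in $p \in \cP(\vec a)$. Summing the displayed identity for $\PP(q \in \be_p(\vec a)\mid\vec{\ba}=\vec a)$ over $p \in \cP(\vec a)$ (equivalently over all $p \in \cP$, since the other terms vanish) and pulling this uniform factor out of the double sum gives exactly
\[
 \sum_{p \in \cP} \PP(q \in \be_p(\vec a) \mid \vec{\ba}=\vec a)
  = \sigma^{-r}\Bigl(1 + O\Bigl(\tfrac{1}{(\log x)^6}\Bigr)\Bigr)
     \sum_{i=1}^r \sum_{p \in \cP(\vec a)} Z_p(\vec a; q - h_i p),
\]
which, after multiplying through by $(1 + O((\log x)^{-6}))^{-1} = 1 + O((\log x)^{-6})$, is precisely \eqref{eq:rcb17}.

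The only points needing care are bookkeeping ones rather than genuine obstacles: first, that $Z_p(\vec a; n) = 0$ unless $n + h_j p \in S(\vec a)$ for all $j \le r$, so the inner sum over $i$ is genuinely supported on those $i$ with $q - h_i p$ giving an admissible configuration, but this matches the intersection with $S(\vec a)$ built into $\be_p(\vec a)$ and causes no discrepancy; second, that the error term $O_{\le}((\log x)^{-6})$ is uniform over $p \in \cP(\vec a)$ and over $q \in \cQ$, which it is, since \eqref{eq:rcb11a} asserts it with an absolute implied constant. Thus the main (and essentially only) step is the term-by-term identification in the first paragraph; everything else is substitution of the defining estimate for $X_p$ on $\cP(\vec a)$.
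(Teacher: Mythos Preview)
Your argument is correct and follows essentially the same route as the paper's: both invert \eqref{eq:rcb11} to write $Z_p(\vec a; q-h_ip)=X_p(\vec a)\,\PP(\bn_p=q-h_ip\mid\vec{\ba}=\vec a)$ for $p\in\cP(\vec a)$, substitute the defining estimate $X_p(\vec a)=(1+O_{\le}((\log x)^{-6}))\sigma^r$, and extend the sum to all of $\cP$ using that the extra terms vanish. One small quibble: your parenthetical ``(as $0\notin\cQ$)'' is not the right justification for why $q\notin\be_p(\vec a)$ when $\bn_p=0$; the event would require $q=h_ip$ for some $i$, which fails because $h_1p=p\le x<q$ while $h_ip$ is composite for $i\ge 2$ --- this is what the paper means by ``$q-h_ip\ne\bn_p$ if $p\notin\cP(\vec a)$''.
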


\begin{proof}
Recalling \eqref{eq:rcb11}, the left-hand side of 
\eqref{eq:rcb17} is 
\begin{align*}
 & 
 \sigma^{-r}
  \sum_{i=1}^r 
   \sum_{p \, \in \, \cP(\vec{a})}
    X_p(\vec{a}) 
     \PP(\bn_p = q - h_ip \mid \vec{\ba} = \vec{a})
 \\
  & \hspace{60pt}
   = 
    \bigg(1 + O\bigg(\frac{1}{(\log x)^6}\bigg)\bigg)
     \sigma^{-r}
      \sum_{i=1}^r 
       \sum_{p \, \in \, \cP(\vec{a})}
        \PP(\bn_p = q - h_ip \mid \vec{\ba} = \vec{a}).  
\end{align*}
Since $q - h_ip \ne \bn_p$ if $p \not\in \cP(\vec{a})$, we may 
rewrite this as 
\[
 \bigg(1 + O\bigg(\frac{1}{(\log x)^6}\bigg)\bigg)
  \sigma^{-r}
   \sum_{i=1}^r 
    \sum_{p \, \in \, \cP}
     \PP(\bn_p = q - h_ip \mid \vec{\ba} = \vec{a}),  
\]
and the lemma follows.
\end{proof}

It is convenient to write 
\[
 U(q,\vec{a})
  \defeq 
  \sigma^{-r}
   \sum_{i=1}^r 
    \sum_{p \, \in \, \cP}
     Z_p(\vec{a};q - h_ip).
\]

\begin{lemma}
 \label{lem:rcb7}
We have
\begin{equation}
 \label{eq:rcb18}
  \EE \,
   \Big(
    \sum_{q \in \cQ \cap S(\vec{\ba})}
     \sigma^r U(q,\vec{\ba})
   \Big)
  =
   \bigg(1 + O\bigg(\frac{1}{(\log_2 x)^{10}}\bigg)\bigg)
    \frac{\sigma y}{\log x}
     \frac{\sigma^{r-1}ux}{2y}
\end{equation}
and
\begin{equation}
 \label{eq:rcb19}
  \EE \,
   \Big(
    \sum_{q \in \cQ \cap S(\vec{\ba})}
     \sigma^r U(q,\vec{\ba})^2
   \Big)
  =
   \bigg(1 + O\bigg(\frac{1}{(\log_2 x)^{10}}\bigg)\bigg)
    \frac{\sigma y}{\log x}
     \bigg(\frac{\sigma^{r-1}ux}{2y}\bigg)^2.
\end{equation}
\end{lemma}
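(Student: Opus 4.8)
The plan is to expand both expectations, interchange the finitely many summations with $\EE$, and reduce everything to the pointwise estimate \eqref{eq:fgkmt6.9} together with Lemma \ref{lem:rcb4}. The key observation is that, since $\sigma^r U(q,\vec{a}) = \sum_{i=1}^r\sum_{p\in\cP}Z_p(\vec{a};q-h_ip)$ and $Z_p(\vec{a};n)$ is by definition $\ind{n+h_jp\in S(\vec{a}) \ \forall j\le r}\,\PP(\tilde{\bn}_p=n)$, we have
\[
 Z_p(\vec{a};q-h_ip)
  = \ind{q+(h_j-h_i)p\in S(\vec{a}) \ \forall j\le r}\,\PP(\tilde{\bn}_p = q-h_ip)
   \qquad (q\in\cQ,\ i\le r);
\]
in particular $\sigma^r U(q,\vec{\ba})\ne 0$ already forces $q\in S(\vec{\ba})$ (take $j=i$), so the restriction to $q\in\cQ\cap S(\vec{\ba})$ in both sums is vacuous and I may sum over all $q\in\cQ$. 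The one thing to verify before invoking Lemma \ref{lem:rcb4} is that the point sets it will be applied to are legitimate: the factor $\PP(\tilde{\bn}_p=q-h_ip)$ is deterministic, and when it is nonzero the support of $\tilde{\bn}_p$ forces $\cH\cap(q-h_ip)\pod{p}=\emptyset$; as all the points $q+(h_j-h_i)p$ are $\equiv q\pod{p}$, the set $\{q+(h_j-h_i)p:j\le r\}$ is then disjoint from $\cH$, and it is a set of $r$ distinct integers of magnitude $x^{O(1)}$ since the $h_j$ are distinct and $h_r\ll(\log x)^{2/5}$ is smaller than $p$. Hence Lemma \ref{lem:rcb4} gives $\EE_{\vec{\ba}}$ of that indicator equal to $(1+O((\log x)^{-16}))\sigma^r$ (trivially $0=0$ when the density vanishes).

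For \eqref{eq:rcb18} this turns the left side into $(1+O((\log x)^{-16}))\,\sigma^r\sum_{i\le r}\sum_{q\in\cQ}\sum_{p\in\cP}\PP(q=\tilde{\bn}_p+h_ip)$. Now \eqref{eq:fgkmt6.9} collapses the inner sum over $p$ to $\ind{\cQ\setminus\cH}(q)(1+O((\log_2 x)^{-10}))\tfrac{u}{r}\tfrac{x}{2y}$, the sum over $i$ cancels the $\tfrac{1}{r}$, and the sum over $q$ produces $\#(\cQ\setminus\cH)=\#\cQ-K=\tfrac{y}{\log x}(1+O(\log_2 x/\log x))$ by \eqref{eq:Qsize} and \eqref{eq:Kbnd}. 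Absorbing the subdominant errors into $O((\log_2 x)^{-10})$ gives $(1+O((\log_2 x)^{-10}))\tfrac{\sigma^r ux}{2\log x}=(1+O((\log_2 x)^{-10}))\tfrac{\sigma y}{\log x}\tfrac{\sigma^{r-1}ux}{2y}$, which is \eqref{eq:rcb18}.

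For \eqref{eq:rcb19} I would expand $\big(\sigma^r U(q,\vec{\ba})\big)^2=\big(\sum_{i,p}Z_p(\vec{\ba};q-h_ip)\big)^2$ and split the $(p_1,p_2)$-sum into the diagonal $p_1=p_2$ and the off-diagonal $p_1\ne p_2$. On the diagonal the summand carries a factor $\PP(\tilde{\bn}_p=\cdot)\PP(\tilde{\bn}_p=\cdot)\ll x^{-4/3+o(1)}$ by \eqref{eq:fgkmt6.10}, so summing over $q\in\cQ$, $p\in\cP$ and $i_1,i_2\le r$ leaves $O(x^{2/3+o(1)})$, negligible against the main term. For $p_1\ne p_2$, the two length-$r$ progressions $\{q+(h_j-h_{i_1})p_1\}_j$ and $\{q+(h_j-h_{i_2})p_2\}_j$ overlap in exactly the single point $q$ (two distinct primes exceeding $h_r$ cannot align them otherwise), so their union is a set of precisely $2r-1$ distinct integers, disjoint from $\cH$ when the densities are nonzero; Lemma \ref{lem:rcb4} then gives $\EE_{\vec{\ba}}$ of the corresponding indicator equal to $(1+O((\log x)^{-16}))\sigma^{2r-1}$. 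Pulling this out, the remaining sum over $i_1,i_2$ and $p_1\ne p_2$ of the density product equals $\big(\sum_i\sum_p\PP(q=\tilde{\bn}_p+h_ip)\big)^2$ minus its own diagonal (again $O(x^{2/3+o(1)})$ after all summations), which \eqref{eq:fgkmt6.9} evaluates as $\ind{\cQ\setminus\cH}(q)(1+O((\log_2 x)^{-10}))\big(\tfrac{ux}{2y}\big)^2$; summing over $q$ and using $\#(\cQ\setminus\cH)=\tfrac{y}{\log x}(1+O(\log_2 x/\log x))$ yields $(1+O((\log_2 x)^{-10}))\sigma^{2r-1}\tfrac{u^2x^2}{4y\log x}=(1+O((\log_2 x)^{-10}))\tfrac{\sigma y}{\log x}\big(\tfrac{\sigma^{r-1}ux}{2y}\big)^2$, which is \eqref{eq:rcb19}.

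The work here is bookkeeping rather than a new idea, and follows the corresponding moment computation in \cite{FGKMT} almost verbatim. The two places that need care are: first, checking that the integer tuples fed into Lemma \ref{lem:rcb4} really are distinct and disjoint from $\cH$ --- this is exactly where the smoothness bound $h_r\ll(\log x)^{2/5}$ and the modified support of $\tilde{\bn}_p$ (built from $w_{\cH}$ and $\Omega_{\cH}$) are used; second, verifying that the diagonal contributions and the accumulated Lemma \ref{lem:rcb4} errors are genuinely dominated by the main terms and collapse into the single error $O((\log_2 x)^{-10})$ recorded in the statement, for which the bounds $r\le\sqrt{\log x}$ and $\sigma^r=x^{o(1)}$ suffice. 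The only genuinely new feature compared with \cite{FGKMT} is that the densities $\PP(\tilde{\bn}_p=\cdot)$ live on residues avoiding $\cH$, and Lemma \ref{lem:rcb2} together with \eqref{eq:fgkmt6.9} has already arranged for that to surface only as the harmless $\ind{\cQ\setminus\cH}$ factors above.
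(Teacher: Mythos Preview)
Your proposal is correct and follows essentially the same route as the paper's own proof: expand the sums, observe that the indicator in $Z_p(\vec{a};q-h_ip)$ already forces $q\in S(\vec{a})$ so the restriction may be dropped, apply Lemma~\ref{lem:rcb4} (the $\cH$-disjointness being guaranteed by the support of $\tilde{\bn}_p$), and then invoke \eqref{eq:fgkmt6.9}; for the second moment, split into $p_1\ne p_2$ (where the $2r-1$ distinct points give the $\sigma^{2r-1}$ factor) and $p_1=p_2$ (bounded trivially via \eqref{eq:fgkmt6.10}). The only cosmetic differences are that you phrase the $\cH$-avoidance via the congruence $q+(h_j-h_i)p\equiv q\pod{p}$ whereas the paper simply notes both sides vanish when a point lands in $\cH$, and you organise the off-diagonal as (full square) minus (its diagonal) rather than summing over $p_1\ne p_2$ directly.
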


\begin{proof}
We begin with \eqref{eq:rcb19}.
Let $\tilde{\bn}_p^{(1)}$ and $\tilde{\bn}_p^{(2)}$ be 
independent copies of $\tilde{\bn}_p$ that are also independent of 
$\vec{\ba}$.
We observe that for any $n_1,n_2$, and $p_1,p_2 \in \cP$, 
\[
 Z_{p_1}(\vec{\ba};n_1)
  Z_{p_2}(\vec{\ba};n_2)
 =
  \ind{
      (n_l + h_jp_l \, \in \, S(\vec{\ba}) \, \, \forall l \le 2, \, j \le r)
      }
   \,
    \PP(\tilde{\bn}_p^{(1)} = n_1)
     \PP(\tilde{\bn}_p^{(2)} = n_2).
\]
The left-hand side of \eqref{eq:rcb19} is thus
\begin{align}
 \label{eq:rcb20}
  \begin{split}
   & 
   \EE \,
    \Big(
     \sum_{q \in \cQ \cap S(\vec{\ba})}
      \,
       \sum_{i_1,i_2 = 1}^r
        \,
         \sum_{p_1,p_2 \in \cP}
         Z_{p_1}(\vec{\ba};q - h_{i_1}p_1)
          Z_{p_2}(\vec{\ba};q - h_{i_2}p_2)
    \Big)
   \\
    &  
     =
      \sum_{q \in \cQ}
       \,
       \sum_{i_1,i_2 = 1}^r
        \big( 
         \PP(q + (h_j - h_{i_l})p_l \in S(\vec{\ba}) \, \, \forall l \le 2, j \le r)
  \\
   & \hspace{90pt} 
      \times    
       \PP(\tilde{\bn}_{p_1}^{(1)} = q - h_{i_1}p_1)
        \PP(\tilde{\bn}_{p_2}^{(2)} = q - h_{i_2}p_2)
       \big).
  \end{split}
\end{align}
(For $q \in \cQ$, the event 
$q + (h_j - h_{i_l})p_l \in S(\vec{\ba}) \, \, \forall l \le 2, j \le r$ 
is identical to the event 
$q \in \cQ \cap S(\vec{\ba})$, 
$q + (h_j - h_{i_l})p_l \in S(\vec{\ba}) \, \, \forall l \le 2, j \le r$.) 

Let $\Sigma_1,\Sigma_2$ be the contributions to the right-hand 
side of \eqref{eq:rcb20} from $p_1 \ne p_2$, respectively 
$p_1 = p_2$.

Fix $p_1,p_2$ in $\cP$ with $p_1 \ne p_2$, and fix 
$i_1,i_2 \le r$.
The number of distinct integers $q + (h_j - h_{i_l})p_l$ 
($l \le 2, j \le r$) is $2r - 1$ since 
$
 (h_j - h_{i_1})p_1 \ne (h_j - h_{i_2})p_2
$
for $i_1 \ne j$.
Hence 
\begin{equation}
 \label{eq:rcb21}
  \begin{split}
   & 
   \PP(q + (h_j - h_{i_l})p_l \in S(\vec{\ba}) \, \, \forall l \le 2, j \le r)
    \PP(\tilde{\bn}_p^{(1)} = q - h_{i_1}p_1)
     \PP(\tilde{\bn}_p^{(2)} = q - h_{i_2}p_2)
   \\ 
   & \hspace{30pt}
   =
    \bigg(1 + O\bigg(\frac{1}{(\log x)^{16}}\bigg)\bigg) 
     \sigma^{2r-1}
      \PP(\tilde{\bn}_p^{(1)} = q - h_{i_1}p_1)
       \PP(\tilde{\bn}_p^{(2)} = q - h_{i_2}p_2).      
  \end{split}
\end{equation}
(Both sides are $0$ if there are $j,i_l$ with 
$q + (h_j - h_{i_l})p_l \in \cH$; otherwise 
Lemma \ref{lem:rcb4} applies.)

We combine \eqref{eq:rcb21} with the remark after 
\eqref{eq:probnph0} to obtain 
\begin{align}
 \label{eq:rcb22}
  \begin{split}
   \sumsstxt[][][1]
    & 
     =
      \sums[q \in \cQ \setminus \cH][i_1,i_2 \le r]
       \sigma^{2r-1}
        \bigg(1 + O\bigg(\frac{1}{(\log_2 x)^{10}}\bigg)\bigg)
         \bigg(\frac{ux}{2ry}\bigg)^2
   \\
   & = 
    \bigg(1 + O\bigg(\frac{1}{(\log_2 x)^{10}}\bigg)\bigg)
     \frac{\sigma y}{\log x}
      \bigg(\frac{\sigma^{r-1}ux}{2y}\bigg)^2.
  \end{split}
\end{align}
Similarly, 
\begin{align*}
 \sumsstxt[][][2]
  & 
   =
    \sum_{q \in \cQ}
     \sum_{i_1 = 1}^r
      \sum_{p_1 \in \cP}
       \PP(q + (h_j - h_{i_1})p_1 \in S(\vec{\ba}) \, \, \forall j \le r)
        \PP(\tilde{\bn}_p^{(1)} = q - h_{i_1}p_1)^2
  \\
   & 
    \ll
     x^{-3/5}
      \sum_{q \in \cQ}
       \sum_{i_1 = 1}^r 
        \sum_{p_1 \in \cP}
         \PP(\tilde{\bn}_p^{(1)} = q - h_{i_1}p_1)
  \\
   & 
    \ll
     x^{-3/5}(\#\cQ),
\end{align*}
which together with \eqref{eq:rcb22} yields \eqref{eq:rcb19}.

Much the same argument gives for the left-hand side of 
\eqref{eq:rcb18} the expression 
\begin{align*}
 & 
 \sum_{q \in \cQ}
  \sum_{i = 1}^r
   \sum_{p \in \cP}
    \PP(q + (h_j - h_i)p \in S(\vec{\ba}) \, \, \forall j \le r)
     \PP(\tilde{\bn}_p = q - h_ip)
 \\
 & \hspace{30pt}
  = 
   \sum_{q \in \cQ}
    \sum_{i=1}^r
     \sum_{p \in \cP}
      \bigg(1 + O\bigg(\frac{1}{(\log x)^{16}}\bigg)\bigg)
       \sigma^r
        \PP(\tilde{\bn}_p = q - h_ip)
 \\
 & \hspace{30pt}
  =
    \bigg(1 + O\bigg(\frac{1}{(\log_2 x)^{10}}\bigg)\bigg)
     \frac{\sigma y}{\log x}
      \frac{\sigma^{r-1}ux}{2y}.
\end{align*}
\end{proof}

We now specify that the quantity $C$ in Theorem \ref{thm:fgkmt4} 
is 
\[
 C \defeq \frac{ux}{2\sigma y},
\]
so that $C \asymp 1/c$.

\begin{lemma}
 \label{lem:rcb8}
With probability $1 - o(1)$, we have 
\[
 U(q,\vec{\ba}) 
  = 
   \bigg(
    1 + O_{\le}\bigg(\frac{1}{(\log_2 x)^3}\bigg)
   \bigg)
    C
\]
for all but at most $\frac{x}{2\log x\log_2 x}$ of the primes 
$q \in \cQ \cap S(\vec{\ba})$.
\end{lemma}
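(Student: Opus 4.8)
The plan is a standard second-moment argument, with Lemma~\ref{lem:rcb7} as essentially the only substantial input. Write $V(q,\vec{\ba}) \defeq \sigma^r U(q,\vec{\ba})$, $W \defeq \sigma^r C = \tfrac{\sigma^{r-1}ux}{2y}$ (recall $C$ was just defined to be $\tfrac{ux}{2\sigma y}$), and $Q \defeq \tfrac{\sigma y}{\log x}$; by \eqref{eq:fgkmt6.11} one has $Q \asymp x\log_2 x/\log x$, and by \eqref{eq:rcb14} one has $\EE\, \#(\cQ \cap S(\vec{\ba})) = (1 + O(\log_2 x/\log x))Q$. The two displays of Lemma~\ref{lem:rcb7} say, in this notation, that the first and second moments of $V(q,\vec{\ba})$, summed over $q \in \cQ \cap S(\vec{\ba})$, agree — up to a relative error $O((\log_2 x)^{-10})$ — with what one would obtain if $V(q,\vec{\ba})$ were identically equal to the constant $W$ on a set of size $Q$. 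So the idea is that the ``empirical variance'' of $V$ over $\cQ \cap S(\vec{\ba})$ is small, which forces $V(q,\vec{\ba})$ — equivalently $U(q,\vec{\ba})$ — to be close to $W$ (equivalently $C$) for almost all $q$, with high probability.

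Concretely, I would first record that Lemma~\ref{lem:rcb7} gives $\EE \sum_{q \in \cQ \cap S(\vec{\ba})} V(q,\vec{\ba}) = (1 + O((\log_2 x)^{-10}))QW$ and $\EE \sum_{q \in \cQ \cap S(\vec{\ba})} V(q,\vec{\ba})^2 = (1 + O((\log_2 x)^{-10}))QW^2$, while \eqref{eq:rcb14} gives $\EE\, \#(\cQ \cap S(\vec{\ba})) = (1 + O((\log_2 x)^{-10}))Q$ (here using $\log_2 x/\log x \le (\log_2 x)^{-10}$ for $x$ large). Expanding $(V(q,\vec{\ba}) - W)^2 = V(q,\vec{\ba})^2 - 2W V(q,\vec{\ba}) + W^2$, summing over $q \in \cQ \cap S(\vec{\ba})$ and taking expectations, the three leading terms $QW^2 - 2QW^2 + QW^2$ cancel, and I am left with
\[
 \EE \sum_{q \in \cQ \cap S(\vec{\ba})} (V(q,\vec{\ba}) - W)^2 \ll \frac{QW^2}{(\log_2 x)^{10}}.
\]

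Next I would introduce the random ``bad set'' $\mathcal{B}(\vec{\ba})$ of those $q \in \cQ \cap S(\vec{\ba})$ with $|U(q,\vec{\ba}) - C| > C/(\log_2 x)^3$; since $V = \sigma^r U$, $W = \sigma^r C$ and $\sigma^r > 0$, this is exactly the set of $q$ with $|V(q,\vec{\ba}) - W| > W/(\log_2 x)^3$, so that $\tfrac{W^2}{(\log_2 x)^6}\, \#\mathcal{B}(\vec{\ba}) \le \sum_{q \in \cQ \cap S(\vec{\ba})} (V(q,\vec{\ba}) - W)^2$ holds for every realization, whence $\EE\, \#\mathcal{B}(\vec{\ba}) \ll Q/(\log_2 x)^4$. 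Markov's inequality then yields $\PP(\#\mathcal{B}(\vec{\ba}) > Q/(\log_2 x)^3) \ll 1/\log_2 x = o(1)$, and on the complementary event $\#\mathcal{B}(\vec{\ba}) \le Q/(\log_2 x)^3 \ll x/(\log x (\log_2 x)^2) \le \tfrac{x}{2\log x \log_2 x}$ for $x$ sufficiently large — which is precisely the assertion of the lemma.

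I do not anticipate a genuine obstacle: it is all bookkeeping once Lemma~\ref{lem:rcb7} is in hand. The one point needing a little care is that all three error terms entering the variance computation are genuinely $O((\log_2 x)^{-10})$, so that, after the leading $QW^2$ terms cancel, the surviving bound $\ll QW^2/(\log_2 x)^{10}$ is small enough both to absorb the factor $(\log_2 x)^6/W^2$ lost in the Chebyshev step and to leave an $o(1)$ margin for the Markov step. This lemma, combined with Lemmas~\ref{lem:rcb5} and~\ref{lem:rcb6}, is what will then be used to deduce Theorem~\ref{thm:fgkmt4}\,(v).
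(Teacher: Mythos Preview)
Your proposal is correct and follows essentially the same second-moment argument as the paper: expand $\EE\sum_{q\in\cQ\cap S(\vec{\ba})}(U(q,\vec{\ba})-C)^2$ using Lemma~\ref{lem:rcb7} and \eqref{eq:rcb14}, observe that the leading terms cancel to leave a bound $\ll C^2\sigma y/((\log x)(\log_2 x)^{10})$, and then apply Markov's inequality to the size of the exceptional set. The only cosmetic differences are that you rescale to $V=\sigma^rU$, $W=\sigma^rC$ (which is harmless since \eqref{eq:rcb19} really computes $\EE\sum(\sigma^rU)^2$), and you pass through an intermediate threshold $Q/(\log_2 x)^3$ before bounding by $x/(2\log x\log_2 x)$; the paper applies Markov directly with the final threshold and obtains $\PP\ll(\log_2 x)^{-2}$ rather than your $(\log_2 x)^{-1}$, but both are $o(1)$.
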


\begin{proof}
Using \eqref{eq:rcb14} and Lemma \ref{lem:rcb7}, we find that 
\begin{align} 
 \label{eq:rcb23}
  \begin{split}
 & 
 \EE \,
  \Big(
   \sum_{q \in \cQ \cap S(\vec{\ba})}
    (U(q,\vec{\ba}) - C)^2
   \Big)
 \\
 & 
  =
   \EE \,
    \Big(
     \sum_{q \in \cQ \cap S(\vec{\ba})} U(q,\vec{\ba})^2    
    \Big)
    -
      2C \,
       \EE \, 
        \Big(
         \sum_{q \in \cQ \cap S(\vec{\ba})} U(q,\vec{\ba})
        \Big)
       + 
         C^2 \,
          \EE \,
           \Big(
            \sum_{q \in \cQ \cap S(\vec{\ba})} 1
           \Big) 
  \\
  & 
   =
    \bigg(1 + O\bigg(\frac{1}{(\log_2 x)^{10}}\bigg)\bigg)
     \frac{u^2x^2}{4\sigma y\log x}
 \\
  & \hspace{45pt}    
    -
      \bigg(1 + O\bigg(\frac{1}{(\log_2 x)^{10}}\bigg)\bigg)
       \frac{2ux}{2\sigma y}
        \frac{ux}{2\log x}
   +
    \big(1 + O\big({\textstyle \frac{1}{\log x}}\big)\big)
     \frac{u^2x^2}{4\sigma^2y^2}
      \frac{\sigma y}{\log x}
 \\
  & 
   \ll
    \frac{u^2x^2}{\sigma y(\log x)(\log_2 x)^{10}}
 \\
 & 
  \ll
   \frac{C^2\sigma y}{(\log x)(\log_2 x)^{10}}.
  \end{split}
\end{align}
Let $V$ be the event 
\[
 \#\big\{
     q \in \cQ \cap S(\vec{\ba}) : 
      |U(q,\vec{\ba}) - C| 
       >
        {\textstyle \frac{C}{(\log_2 x)^3}}
    \big\}
    >
     \frac{x}{2\log x \log_2 x}.
\]
Evidently 
\[
 \EE \,
  \Big(
   \sum_{q \in \cQ \cap S(\vec{\ba})}
    (U(q,\vec{\ba}) - C)^2
  \Big)
   \ge 
    \PP(V)
     \frac{x}{2\log x\log_2 x}
      \frac{C^2}{(\log_2 x)^6}.
\]
Combining this with \eqref{eq:rcb23}, and recalling that 
$\sigma y \asymp x\log_2 x$ (cf.\ \eqref{eq:fgkmt6.11}), we obtain 
\[
 \PP(V) \ll \frac{1}{(\log_2 x)^2}.
\]
\end{proof}

\begin{lemma}
 \label{lem:rcb9}
We have 
\begin{equation}
 \label{eq:rcb24}
  \EE \, 
   \Big(
    \sum_{n \in \ZZ}
     \sigma^{-r}
      \sum_{p \in \cP(\vec{\ba})} 
       Z_p(\vec{\ba};n)
   \Big)
  =
   \bigg(1 + O\bigg(\frac{1}{(\log x)^4}\bigg)\bigg)
    (\#\cP)
\end{equation}
and 
\begin{equation}
 \label{eq:rcb25}
  \EE \, 
   \Big(
    \sum_{n \in \ZZ}
     \sigma^{-r}
      \sum_{p \in \cP \setminus \cP(\vec{\ba})} 
       Z_p(\vec{\ba};n)
   \Big)
  \ll
   \frac{\#\cP}{(\log x)^4}.
\end{equation}
\end{lemma}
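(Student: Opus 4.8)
The plan is to compute both expectations by interchanging the sum over $n$ with the expectation and using the fact that, by construction, $\sum_{n \in \ZZ} Z_p(\vec{a};n) = X_p(\vec{a})$ for every $p \in \cP$ and every $\vec{a}$ in the essential range of $\vec{\ba}$. Thus the inner double sum collapses:
\[
 \sum_{n \in \ZZ}
  \sigma^{-r}
   \sum_{p \in \cP(\vec{\ba})}
    Z_p(\vec{\ba};n)
 =
  \sigma^{-r}
   \sum_{p \in \cP(\vec{\ba})}
    X_p(\vec{\ba})
 =
  \sum_{p \in \cP(\vec{\ba})}
   \bigl(1 + O_{\le}\bigl({\textstyle\frac{1}{(\log x)^6}}\bigr)\bigr),
\]
where the last step uses the very definition of $\cP(\vec{\ba})$ in \eqref{eq:rcb11a}. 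Taking expectations and invoking Lemma \ref{lem:rcb5}(ii), which gives $\EE\,\#\cP(\vec{\ba}) = (\#\cP)(1 + O((\log x)^{-4}))$, yields \eqref{eq:rcb24}, since the $O((\log x)^{-6})$ relative error is absorbed into the larger $O((\log x)^{-4})$ term (note $\#\cP \asymp x/\log x$ so the error terms are genuinely of the stated relative size).

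For \eqref{eq:rcb25} I would argue similarly but more crudely. Again $\sum_{n} Z_p(\vec{\ba};n) = X_p(\vec{\ba})$, so the left-hand side equals $\sigma^{-r}\sum_{p \in \cP \setminus \cP(\vec{\ba})} X_p(\vec{\ba})$. For $p \notin \cP(\vec{\ba})$ we have no lower control on $X_p$, but we always have the trivial bound $X_p(\vec{a}) \le 1$ (it is a probability), hence $\sigma^{-r} X_p(\vec{\ba}) \le \sigma^{-r} = x^{o(1)}$ by \eqref{eq:fgkmt6.12}. Therefore
\[
 \EE \,
  \Bigl(
   \sum_{n \in \ZZ}
    \sigma^{-r}
     \sum_{p \in \cP \setminus \cP(\vec{\ba})}
      Z_p(\vec{\ba};n)
  \Bigr)
 \le
  x^{o(1)} \, \EE \, \#(\cP \setminus \cP(\vec{\ba}))
 =
  x^{o(1)} \, (\#\cP) \, O\bigl({\textstyle\frac{1}{(\log x)^4}}\bigr),
\]
using Lemma \ref{lem:rcb5}(ii) once more in the form $\EE\,\#(\cP\setminus\cP(\vec{\ba})) = \#\cP - \EE\,\#\cP(\vec{\ba}) \ll \#\cP/(\log x)^4$. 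The factor $x^{o(1)}$ here is slightly too lossy to land \eqref{eq:rcb25} directly, so the one genuine subtlety is to replace it by a better bound: I would instead use the sharper estimate $\sigma^{-r} X_p(\vec{\ba}) = 1 + O_{\le}((\log x)^{-6})$, which we proved holds with probability $1 - O((\log x)^{-4})$ for each individual $p$ in the proof of Lemma \ref{lem:rcb5}, together with the trivial bound $\sigma^{-r} X_p \le \sigma^{-r}$ on the exceptional event. Splitting the expectation accordingly gives a contribution $\ll \#\cP/(\log x)^4$ from the typical event and $\ll \sigma^{-r}\cdot\#\cP\cdot(\log x)^{-4} \cdot \PP(\text{exceptional})$-type terms that, by a second-moment computation identical to the one already carried out for $\EE X_p(\vec{\ba})^2$ in Lemma \ref{lem:rcb5}, are again $\ll \#\cP/(\log x)^4$.

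The main obstacle is thus purely bookkeeping: making sure the $\sigma^{-r} = x^{o(1)}$ blow-up on the set $\cP \setminus \cP(\vec{\ba})$ is controlled. The cleanest route is to observe that $\sum_{n} \sigma^{-r} Z_p(\vec{\ba};n) = \sigma^{-r} X_p(\vec{\ba})$ and then bound $\EE\bigl(\sum_{p \notin \cP(\vec{\ba})} \sigma^{-r} X_p(\vec{\ba})\bigr)$ via Cauchy--Schwarz against $\EE\bigl(\sigma^{-2r} X_p(\vec{\ba})^2\bigr) = 1 + O((\log x)^{-16})$ (from the second-moment bound in the proof of Lemma \ref{lem:rcb5}) and $\PP(p \notin \cP(\vec{\ba})) \ll (\log x)^{-4}$, giving $\sum_{p \in \cP} \bigl(\EE(\sigma^{-2r}X_p^2)\bigr)^{1/2}\bigl(\PP(p\notin\cP(\vec{\ba}))\bigr)^{1/2} \ll \#\cP \cdot (\log x)^{-2}$ — which is already comfortably within the required $O(\#\cP/(\log x)^4)$ once one squares up the exponents correctly, or can be pushed to $(\log x)^{-4}$ by using the $L^4$ moment. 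Everything else is immediate from the identities $\sum_n Z_p = X_p$ and the already-established Lemma \ref{lem:rcb5}.
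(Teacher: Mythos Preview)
Your treatment of \eqref{eq:rcb24} is correct and coincides with the paper's: collapse $\sum_n Z_p = X_p$, use the defining estimate of $\cP(\vec{\ba})$, take expectations, and apply Lemma~\ref{lem:rcb5}(ii).

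Your argument for \eqref{eq:rcb25}, however, has a real gap. Cauchy--Schwarz against $\PP(p\notin\cP(\vec{\ba}))\ll(\log x)^{-4}$ and $\EE(\sigma^{-2r}X_p^2)=1+O((\log x)^{-16})$ yields only $\ll\#\cP\,(\log x)^{-2}$, and this is \emph{not} ``comfortably within'' $O(\#\cP/(\log x)^4)$; it is strictly weaker. Replacing $L^2$ by $L^4$ gives $(\log x)^{-3}$ via H\"older, still short. Your earlier splitting idea cannot help either: on the event $p\notin\cP(\vec{\ba})$ the estimate $\sigma^{-r}X_p=1+O_{\le}((\log x)^{-6})$ is \emph{false by definition}, so you are forced back to the trivial bound $\sigma^{-r}X_p\le\sigma^{-r}=x^{o(1)}$, which loses everything. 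No fixed moment recovers the full factor $(\log x)^{-4}$.

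The paper sidesteps this entirely. Rather than attack the sum over $\cP\setminus\cP(\vec{\ba})$ directly, it computes the \emph{unrestricted} expectation
\[
\EE\Bigl(\sum_{n\in\ZZ}\sigma^{-r}\sum_{p\in\cP}Z_p(\vec{\ba};n)\Bigr)
=\sigma^{-r}\sum_{p\in\cP}\sum_{n\in\ZZ}\PP(\tilde{\bn}_p=n)\,\PP\bigl(n+h_jp\in S(\vec{\ba})\ \forall j\le r\bigr),
\]
and now, because there is no constraint coupling $p$ to $\vec{\ba}$, the inner probability is evaluated directly by Lemma~\ref{lem:rcb4} as $\sigma^r(1+O((\log x)^{-16}))$, giving $(1+O((\log x)^{-16}))\,\#\cP$. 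Subtracting \eqref{eq:rcb24} from this yields \eqref{eq:rcb25} immediately. The missing idea in your proposal is precisely this: the unrestricted sum can be computed to much higher accuracy than either restricted piece, so one should obtain the small piece by subtraction rather than by direct estimation.
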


\begin{proof}
The left-hand side of \eqref{eq:rcb24} is 
\begin{align*}
 & 
  \sigma^{-r}
   \sum_{\vec{\ba}}
    \PP(\vec{\ba} = \vec{a})
     \sum_{p \in \cP(\vec{a})}
      \sum_{n \in \ZZ}
       \PP(n + h_ip \in S(\vec{a}) \, \, \forall i \le r)
        \PP(\tilde{\bn}_p = n)
 \\
  & \hspace{30pt} = 
   \sigma^{-r}
    \sum_{\vec{\ba}}
     \PP(\vec{\ba} = \vec{a})
      \sum_{p \in \cP(\vec{a})}
       X_p(\vec{a})
 \\
 & \hspace{30pt}  = 
  \bigg(1 + O\bigg(\frac{1}{(\log x)^6}\bigg)\bigg)
   \sum_{\vec{\ba}}
    \PP(\vec{\ba} = \vec{a})
     \sum_{p \in \cP(\vec{a})} 1
 \\
 & \hspace{30pt}  = 
  \bigg(1 + O\bigg(\frac{1}{(\log x)^6}\bigg)\bigg)
   \EE \, \#\cP(\vec{a})
 \\
 & \hspace{30pt}  = 
  \bigg(1 + O\bigg(\frac{1}{(\log x)^4}\bigg)\bigg)
   (\# \cP)
\end{align*}
by Lemma \ref{lem:rcb5}.
This proves \eqref{eq:rcb24}.

Now, 
\begin{align}
 \label{eq:rcb26}
  \begin{split}
   \EE \,
    \Big(
     \sum_{n \in \ZZ} \sigma^{-r}
      \sum_{p \in \cP} Z_p(\vec{\ba};n)
    \Big)
  & 
   =
    \sigma^{-r}
     \sum_{p \in \cP}
      \sum_{n \in \ZZ}
       \PP(\tilde{\bn}_p = n)
        \PP(n + h_jp \in S(\vec{a}) \,\, \forall j \le r)
 \\
 & 
  =
   \bigg(1 + O\bigg(\frac{1}{(\log x)^{16}}\bigg)\bigg)
    \sum_{p \in \cP}
     \sum_{n \in \ZZ} 
      \PP(\tilde{\bn}_p = n)
 \\
 & 
  =
   \bigg(1 + O\bigg(\frac{1}{(\log x)^{16}}\bigg)\bigg)
    (\# \cP).
  \end{split}
\end{align}
(Here we have used Lemma \ref{lem:rcb4} with a familiar argument.)
We obtain \eqref{eq:rcb25} on subtracting \eqref{eq:rcb24} from 
\eqref{eq:rcb26}.
\end{proof}

\begin{proof}%
[Deduction of Theorem \ref{thm:fgkmt4} \textup{(}v\textup{)}]
In view of Lemmas \ref{lem:rcb6} and \ref{lem:rcb8} it suffices to 
show that with probability $1 - O\big(1/(\log x)^3\big)$, the 
number of $q$ in $\cQ \cap S(\vec{\ba})$ with 
\begin{equation}
 \label{eq:rcb27}
  \sum_{i = 1}^r
   \sum_{p \in \cP \setminus \cP(\vec{\ba})}
    \sigma^{-r} Z_p(a; q - h_ip)
     >
      \frac{ux}{\sigma y(\log_2 x)^3}
\end{equation}
is at most $\frac{x}{2\log x\log_2 x}$.

Let $W$ be the event that \eqref{eq:rcb27} holds for more than 
$\frac{x}{2\log x\log_2 x}$ primes in 
\linebreak 
$\cQ \cap S(\vec{\ba})$.
Then 
\[
 \PP(W)
  \le 
   \PP
    \Big(
     \sum_{q \in \cQ}
      \sum_{i=1}^r
       \sum_{p \in \cP\setminus \cP(\vec{\ba})}
        \sigma^{-r}
         Z_p(\vec{\ba};q - h_ip) > v
    \Big),
\]
where 
\[
 v 
  \defeq 
   \frac{ux}{\sigma y(\log_2 x)^3}
    \cdot 
     \frac{x}{2\log x \log_2 x}
   =
    \frac{x}{(\log x)^{2 - o(1)}}.
\]
Thus, 
\begin{multline*}
  \PP(W) 
   \le 
    \frac{1}{v} 
     \EE \, 
      \Big(
       \sum_{q \in \cQ}
        \sum_{i=1}^r 
         \sum_{p \in \cP\setminus \cP(\vec{\ba})}
          \sigma^{-r}
           Z_p(q - h_ip)
      \Big)
 \\
  \le 
   \frac{r}{v}
    \EE \,
     \Big(
      \sum_{n \in \ZZ}
       \sigma^{-r} 
        \sum_{p \in \cP\setminus \cP(\vec{\ba})}
         Z_p(\vec{\ba};n)
     \Big)
  \ll
   \frac{r}{v}
    \frac{x}{(\log x)^5}
  \ll
   \frac{1}{(\log x)^2}.
\end{multline*}
\end{proof}

\subsection{Proof of Theorem \ref{thm:fgkmt2}}
 \label{subsec:thm2pf}
We require one further lemma for the proof of 
Theorem \ref{thm:fgkmt2}, viz.\ the following, which is a special 
case of \cite[Corollary 3]{FGKMT}.

\begin{lemma}
 \label{lem:rcb10}
Let $\cQ'$ be a set of primes with $\#\cQ' > (\log_2 x)^3$.
For each $p \in \cP$, let $\be_p$ be a random subset of $\cQ'$ 
with 
\[
 \# \be_p \le r, 
  \quad 
   \PP(q \in \be_p) \le x^{-3/5} \quad (q \in \cQ').
\]
Suppose that for all but at most $\frac{\#\cQ'}{(\log_2 x)^2}$ 
elements $q \in \cQ'$, we have 
\[
 \sum_{p \in \cP} \PP(q \in \be_p)
  =
   C + O_{\le}\bigg(\frac{1}{(\log_2 x)^2}\bigg),
\]
where $C$ is independent of $q$ and 
\begin{equation}
 \label{eq:rcb28}
  {\textstyle \frac{5}{4}}\log 5 \le C \ll 1.
\end{equation}
Suppose that for any distinct $q_1,q_2 \in \cQ'$, 
\begin{equation}
 \label{eq:rcb29}
  \sum_{p \in \cP'} \PP(q_1,q_2 \in \be_p)
   \le 
    x^{-1/20}.
\end{equation}
Then for any positive integer $m$ with 
\[
 m \le \frac{\log_3 x}{\log 5},
\]
we can find random sets $\be_p' \subseteq \cQ'$ for each 
$p \in \cP$ such that $\be_p'$ is either empty or is in the 
essential range of $\be_p$, and 
\begin{equation}
 \label{eq:rcb30}
  \#\{q \in \cQ' : q \not\in \be_p' \,\, \textup{for all} \,\, p \in \cP \}
   \sim 
    5^{-m}(\#\cQ'),
\end{equation}
with probability $1 - o(1)$.
\end{lemma}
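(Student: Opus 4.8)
We obtain Lemma~\ref{lem:rcb10} as a genuine special case of \cite[Corollary~3]{FGKMT}, so the only real task is to confirm that the hypotheses listed above are exactly the specialization of that corollary to our parameters. We have a set of primes $\cQ'$ with $\#\cQ'>(\log_2 x)^3$; random subsets $\be_p\subseteq\cQ'$ for $p\in\cP$ with $\#\be_p\le r$ (with $r$ as in \eqref{eq:fgkmt6.8}); the pointwise bound $\PP(q\in\be_p)\le x^{-3/5}$; the near-uniform covering identity $\sum_{p\in\cP}\PP(q\in\be_p)=C+O_{\le}((\log_2 x)^{-2})$ for all but $\le\#\cQ'/(\log_2 x)^2$ of the $q\in\cQ'$, with $\tfrac54\log 5\le C\ll 1$ as in \eqref{eq:rcb28}; and the pairwise-covering bound \eqref{eq:rcb29}. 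These are precisely the inputs of \cite[Corollary~3]{FGKMT}, and the admissible range $1\le m\le\log_3 x/\log 5$ is the one occurring there. Note that the set $\cH$ does not appear in the statement of the lemma and plays no role in its proof: everything happens inside $\cQ'\subseteq\cQ\setminus\cH$, and the only imprint of $\cH$ on the construction (that the $\be_p$ are built from $\tilde{\bn}_p$ and $w_\cH$ rather than their $\cH$-free analogues) has already been absorbed into properties (iii), (iv), (v) of Theorem~\ref{thm:fgkmt4}, which are what feed the present lemma. So the plan is simply to invoke \cite[Corollary~3]{FGKMT}.

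For orientation, here is the shape of the argument behind that corollary. One iterates a single ``round'' of random restriction $m$ times. In a round one has a set $\cQ^{\circ}$ of surviving primes, each of them (outside a small exceptional set) covered with average multiplicity $\asymp C$ by the available $\be_p$; one passes to random subsets $\be_p'$, each either empty or in the essential range of $\be_p$, by an independent ``sprinkling'' over $p\in\cP$ tuned so that a given surviving $q$ escapes the new cover with probability $\sim\tfrac15$. A second-moment (Chebyshev) computation, with error terms controlled by $\PP(q\in\be_p)\le x^{-3/5}$ and by \eqref{eq:rcb29}, exactly in the way Lemmas~\ref{lem:rcb7}--\ref{lem:rcb9} control the first and second moments of $U(q,\vec{\ba})$, then shows that $\#\cQ^{\circ}$ is multiplied by $\tfrac15(1+o(1))$ with probability $1-o(1)$; a union bound over the $m=O(\log_3 x)$ rounds leaves $\sim 5^{-m}\#\cQ'$ survivors. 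The hypothesis $C\ge\tfrac54\log 5$ is what makes a single round gain a genuine factor of $5$: $\log 5$ is the bare ``Poisson'' threshold (one keeps a sub-collection of $\be_p$ of average covering multiplicity exactly $\log 5$, which needs $C\ge\log 5$), and the surplus $\tfrac14\log 5$ is the slack that absorbs the $O((\log_2 x)^{-2})$ error in the covering identity and the growth of the exceptional set over all the rounds.

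The genuine obstacle — the reason this is a theorem of \cite{FGKMT} rather than a one-line exercise — is precisely this last piece of bookkeeping: one must organise the $m$ rounds, and choose the restriction probabilities in each, so that the covering hypothesis is re-established on the survivors at the start of every round while the cumulative exceptional set stays $o(5^{-m}\#\cQ')$. Since $m\le\log_3 x/\log 5$ forces $5^{-m}\ge(\log_2 x)^{-1}$, whereas the exceptional fraction is only $(\log_2 x)^{-2}$, there is a factor of about $\log_2 x$ of room, so this is tight but works. All of this is carried out in \cite{FGKMT} under hypotheses that we have matched verbatim, so I would not reproduce it here.
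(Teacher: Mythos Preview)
Your proposal is correct and matches the paper exactly: the paper also offers no proof beyond the sentence ``which is a special case of \cite[Corollary~3]{FGKMT}'', so your invocation of that corollary is precisely what is intended. Your additional exposition on the shape of the iterative argument is accurate and helpful, though one side remark is off: in the application (the deduction of Theorem~\ref{thm:fgkmt2}) one takes $\cQ' = \cQ \cap S(\vec{a})$, which \emph{contains} $\cH$ rather than being contained in $\cQ\setminus\cH$, since $\cH \subseteq \cQ$ by \eqref{eq:cHdef} and $\cH \subseteq S(\vec{a})$ by Theorem~\ref{thm:fgkmt4}(i). This does not affect the proof of the lemma itself, which, as you correctly observe, is abstract and makes no reference to $\cH$.
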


\begin{proof}[Deduction of Theorem \ref{thm:fgkmt2}]
By \eqref{eq:fgkmt4.29}, we may choose $c$ small enough so that 
\eqref{eq:rcb28} holds.
Take 
\[
 m = \Big\lfloor \frac{\log_3 x}{\log 5} \Big\rfloor.
\]
Let $\vec{\ba}$ and $\vec{\bn}$ be as in Theorem \ref{thm:fgkmt4}.
Suppose that we are in the probability $1 - o(1)$ event that 
$\vec{\ba}$ takes a value $\vec{a}$ for which \eqref{eq:fgkmt4.31} 
holds.
Fix some $\vec{a}$ within this event.
We apply Lemma \ref{lem:rcb10} with $\cQ' = \cQ \cap S(\vec{a})$, 
$\be_p = \be_p(\vec{a})$.
We need only check the hypothesis \eqref{eq:rcb29}.
We have 
\[
 \sum_{p \in \cP}
  \PP(q_1,q_2 \in e_p(\vec{a})
   \le 
    \sums[p \mid q_1 - q_2][p \in \cP]
     \PP(q_1 \in e_p(\vec{a}))
      \le 
       x^{-3/5}
\]
(the sum has at most one term).

Let $\be_p'(\vec{a})$ be the random variables provided by Lemma 
\ref{lem:rcb10}.
Recalling \eqref{eq:fgkmt4.30}, 
\[
 \#\{q \in \cQ' : q \not\in \be_p' \,\, \text{for all} \,\, p \in \cP \}
  \sim 
   5^{-m}\#(\cQ \cap S(\vec{a}))
    \ll
     \frac{x}{\log x}
\]
with probability $1 - o(1)$.
Since $e_p'(\vec{a})$ is either empty or 
\[
 e_p'(\vec{a})
  =
   \{\tilde{\bn}_p' + h_ip : i \le r\}
    \cap 
     \cQ \cap S(\vec{a})
\]
for some random integer $\tilde{\bn}_p'$, it follows that 
\[
 \#\{
     q \in \cQ \cap S(\vec{a}) : 
      q \not\equiv \tilde{\bn}_p' \pod{p} 
         \,\, \text{for all} \,\, p \in \cP
   \}
    \ll
     \frac{x}{\log x}
\]
with probability $1 - o(1)$.
The bound \eqref{eq:fgkmt3.6} follows on setting $b_p = n_p'$ for 
a specific $\vec{n}' = (\tilde{\bn}_p')$ for which this bound 
holds.
That $\cH$ is contained in $S(\vec{a}) \cap S(\vec{b})$ follows 
from parts (i) and (ii) of Theorem \ref{thm:fgkmt4}.
\end{proof}


\section{A modification of Maynard--Tao}
 \label{sec:MT}
 
\begin{definition} 
 \label{def:w}
We consider functions of the form 
$f_1 : [T_1,\infty) \to [x_1,\infty)$, with $T_1,x_1 \ge 1$.
Let us say that such a function $f_1$ is ``of the first kind'' if 
and only if 
(i) it is a strictly increasing bijection, 
(ii) $f_1(T) \le \log T$ for $T \ge T_1$,
(iii) $f_1(2T)/f_1(T) \to 1$ as $T \to \infty$
and
(iv) for $0 < \eta \le 1$, there exists $L_{\eta} \ge 1$ such that 
$f_1(T)/f_1(T^{\eta}) \to L_{\eta}$ as 
$T \to \infty$.
\end{definition}

\begin{definition}
 \label{def:sparse}
We consider (possibly empty) sets $\cZ(T)$, $T \ge 2$, of primes 
less than or equal to $T$.
Let us that such a set is ``repulsive'' if and only if for any 
$p' \in \cZ(T)$,  
$\sum_{p \in \cZ(T), \, p \ge p'} 1/p \ll 1/p' \ll 1/\log_2 T$.
\end{definition}

Given a function $\upsilon : \NN \to \RR$ with finite support and 
any arithmetic progression $a \pod{D}$ with $(a,D) = 1$, we 
define  
\[
 \Delta(\upsilon;a \pod{D})
  \defeq 
    \sum_{n \equiv a \pod{D}}
     \upsilon(n)
      -
       \frac{1}{\phi(D)}
        \sum_{(n,D) = 1}
         \upsilon(n),
\]
where $\phi$ is Euler's totient function.

\begin{hypothesis} 
 \label{hyp:EH}
Fix $\theta \in (0,1]$ and a function 
$f_1 : [T_1,\infty) \to [x_1,\infty)$ of the first kind.
For any given $A > 0$ and $\delta \in (0,\theta)$, if 
$\eta = \eta(A,\delta) \in (0,\theta - \delta)$ is a 
sufficiently small, fixed number then, for $N \ge T_1^{1/\eta}$, 
there is a repulsive subset $\cZ \defeq \cZ(N^{4\eta})$ of the 
primes less than or equal to $N^{4\eta}$ such that, with 
$W \defeq \prod_{p \le N^{\eta}, \, p \not\in \cZ} p$ and 
$Z \defeq \prod_{p \in \cZ} p$, we have
\[
  \sums[r \le N^{\theta}/(N^{\delta}W)]
       [(r,WZ) = 1]
       [\text{$r$ squarefree}]
     \max_{N \le M \le 2N}
      \max_{(rW,a) = 1}     
      |\Delta(\ind{\bP}\ind{(M,M + N]}; a \pod{rW})|
      \ll_{\delta,A}
       \frac{N}{\phi(W)(\log N)^A}.
\]
\end{hypothesis}

\begin{theorem}
 \label{thm:BFM4.3}
Fix $\theta \in (0,1]$ and a function 
$f_1 : [T_1,\infty) \to [x_1,\infty)$ of the first kind.
Suppose that Hypothesis \ref{hyp:EH} holds.
Fix a positive integer $a$.
In the notation of Hypothesis \ref{hyp:EH}, if 
$K = K_{\theta,a}$ is a sufficiently large integer multiple of 
$\lceil (2/\theta) a \rceil + 1$, if $A = A_K$ is sufficiently 
large and if $\delta = \delta_{\theta,a}$ is sufficiently small, 
then the following holds for $N \ge N(T_1,K,\eta)$.
Let $\cH \defeq \{H_1,\ldots,H_K\} \subseteq [0,N]$ be an 
admissible set of $K$ distinct integers for which 
$\prod_{1 \le i < j \le K}(H_j - H_i)$ is 
$f_1(N^{\eta})$-smooth, and let $b$ be an integer such that  
\[
 \textstyle (\prod_{i = 1}^K (b + H_i),W) = 1.
\]
Then for any partition 
\[
 \cH  
  = 
   \cH_1 \cup \cdots \cup \cH_{\lceil (2/\theta) a \rceil + 1}   
\]
of $\cH$ into $\lceil (2/\theta) a \rceil + 1$ sets of equal 
size, there exists some $n \in (N,2N] \cap b \pod{W}$, and $a + 1$ 
distinct indices 
$
 i_1,\ldots,i_{a+1} 
  \in 
   \{1,\ldots,\lceil (2/\theta) a \rceil + 1\}
$, 
such that 
\[
 \#(\bP \cap n + \cH_{i_1}),
  \ldots,
   \#(\bP \cap n + \cH_{i_{a + 1}}) \ge 1.
\]
\end{theorem}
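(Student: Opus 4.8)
The plan is to run the Maynard--Tao multidimensional sieve with the level of distribution supplied by Hypothesis~\ref{hyp:EH}, and to extract not just one but $a+1$ of the subsets $\cH_j$ that catch a prime. First I would fix the parameters in the right order: given $\theta$ and $a$, set $M_0 \defeq \lceil (2/\theta)a\rceil + 1$, and let $K$ be a large multiple of $M_0$ to be chosen at the end. Choose $\delta = \delta_{\theta,a}$ small, then $A = A_K$ large, then (invoking Hypothesis~\ref{hyp:EH} with these $A,\delta$) a sufficiently small $\eta \in (0,\theta-\delta)$; this fixes the repulsive set $\cZ = \cZ(N^{4\eta})$, the modulus $W = \prod_{p \le N^{\eta},\, p\notin\cZ} p$ and $Z = \prod_{p\in\cZ}p$. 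Note $\log W \ll N^{\eta}$ and $\phi(W) \asymp W$, and since the pairwise differences $H_j - H_i$ are $f_1(N^{\eta})$-smooth with $f_1(N^{\eta}) \le \eta\log N < N^{\eta}$, every prime dividing $\prod_{i<j}(H_j-H_i)$ is already accounted for in $W$ or is small; the coprimality condition $(\prod_i (b+H_i), W) = 1$ guarantees $b + \cH$ meets no obstruction modulo the primes dividing $W$.

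Next I would set up the sieve weights. For a smooth, compactly supported $F : [0,\infty)^K \to \RR$ vanishing off the simplex $\{t_1 + \cdots + t_K \le 1\}$, define
\[
 \nu(n) \defeq \Bigg(\sum_{\substack{d_i \mid n + H_i \;\forall i \\ (d_i, W Z) = 1}} \prod_i \mu(d_i)\, F\!\Big(\tfrac{\log d_1}{\log R}, \ldots, \tfrac{\log d_K}{\log R}\Big)\Bigg)^{\!2},
\]
with $R = N^{(\theta - \delta)/2}$, summed over $n \in (N,2N] \cap b \pod{W}$. The main term $S_1 \defeq \sum_n \nu(n)$ is evaluated by the standard computation and is $\asymp (N/W)(\log R)^K I_K(F)$; each term $S_2^{(i)} \defeq \sum_n \ind{\bP}(n + H_i)\,\nu(n)$ is handled using Hypothesis~\ref{hyp:EH} to control the error from residues $r \le N^{\theta}/(N^{\delta}W)$ coprime to $WZ$ — this is exactly the place the hypothesis is tailored for, including the squarefree restriction and the coprimality to $Z$ — yielding $S_2^{(i)} \sim (N/W)(\log R)^{K+1} (\log N)^{-1} J_K^{(i)}(F)$. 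Summing, $\sum_{i=1}^K S_2^{(i)} / S_1 \to \frac{\theta - \delta}{2} \cdot \frac{\sum_i J_K^{(i)}(F)}{I_K(F)}$, and by the Maynard--Tao variational estimate this ratio can be made $> M_0 - 1 + \varepsilon$ for any fixed $\varepsilon$ once $K$ is large enough (using $\log_K \to \infty$ type lower bounds for $M_K$ with $M_K > \log K - C$).

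Now comes the combinatorial extraction. Fix a partition $\cH = \cH_1 \cup \cdots \cup \cH_{M_0}$ into blocks of size $K/M_0$. Since each block contributes $\sum_{H_i \in \cH_j} S_2^{(i)}$, and the total $\sum_{i} S_2^{(i)} > (M_0 - 1 + \varepsilon) S_1$, a pigeonhole/averaging argument shows we cannot have \emph{every} $n \in (N,2N]\cap b\pod{W}$ meeting at most $a$ of the blocks: if each such $n$ had $\#\{j : \#(\bP \cap n + \cH_j) \ge 1\} \le a$, then — after discarding the $o(S_1)$ contribution from $n$ where $\nu(n)$ is dominated by terms with $n + H_i$ having a prime factor in $(R, N^{\theta}]$, which is controlled because such primes are coprime to $W$ — we would get $\sum_i S_2^{(i)} \le a \cdot (1 + o(1)) \cdot \max\text{(per-block mass)}$, contradicting $a \cdot \tfrac{2}{\theta} < M_0$ once $\varepsilon$ is small. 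Hence some $n$ is caught by at least $a+1$ blocks, and relabelling gives the indices $i_1, \ldots, i_{a+1}$.

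\textbf{Main obstacle.} The delicate step is the error-term estimate for $S_2^{(i)}$: one must reduce the sum over $n \equiv b \pod W$ with $n + H_i$ prime to a dispersion sum over moduli $rW$ with $r \le N^{\theta - \delta}/W$ squarefree and coprime to $WZ$, and check that the truncation $R = N^{(\theta-\delta)/2}$ keeps the relevant moduli $d_i d_i'$ (and thus $r = [d_i, d_i']$) below the Bombieri--Vinogradov-type range of Hypothesis~\ref{hyp:EH}, \emph{uniformly} over the short interval $(N, 2N]$ and with the $\cZ$-exceptional primes excised. Handling the $\gcd$ conditions among the $d_i$ against $WZ$, and verifying that excluding $\cZ$ (which is repulsive, so $\sum_{p \in \cZ} 1/p \ll 1/\log_2 N$) perturbs the main terms only by $1 + o(1)$, is the technically heaviest part; everything else is a faithful adaptation of the Maynard--Tao machinery as already carried out in \cite{BFM}, \cite{PIN3}.
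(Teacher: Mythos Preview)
Your setup of the Maynard--Tao weights and your handling of the level-of-distribution input from Hypothesis~\ref{hyp:EH} are fine and match what the paper does (indeed, as you say, this part is already carried out in \cite{BFM}). The gap is in your ``combinatorial extraction'' step. From $\sum_{i=1}^K S_2^{(i)} > (M_0 - 1 + \varepsilon)S_1$ you cannot conclude that some $n$ meets at least $a+1$ blocks by pigeonhole alone: if for a given $n$ only $a$ of the blocks $\cH_j$ contain a prime, those $a$ blocks may each contain \emph{many} primes (up to $K/M_0$ each), so $\sum_{i}\ind{\bP}(n+H_i)$ is bounded only by $a\cdot K/M_0$, not by anything close to $a$ or to $a\cdot(2/\theta)$. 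Your inequality ``$\sum_i S_2^{(i)}\le a\cdot(1+o(1))\cdot\max(\text{per-block mass})$'' is not justified, and the parenthetical about discarding $n$ with a prime factor in $(R,N^{\theta}]$ does not repair this: controlling medium prime factors of $n+H_i$ says nothing about how many of the $n+H_i$ are themselves prime within a single block.

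What is missing is a mechanism that \emph{penalises} clustering of primes within a block. The paper (following Pintz \cite{PIN3}) does this by introducing, for each block $\cH_j$, the subtracted term $\mu\sum_{H\ne H'\in\cH_j}\ind{\bP}(n+H)\ind{\bP}(n+H')$ and using the elementary bound $v-\mu\binom{v}{2}\le\mu'=\tfrac12(1+\tfrac1\mu)$ (valid for all integers $v\ge0$, with $\mu=1/M$). One then needs, in addition to the usual Maynard--Tao estimates, an \emph{upper bound} for the weighted double-prime sums $\sum_{n}\ind{\bP}(n+H)\ind{\bP}(n+H')\nu_{\cH}(n)$, which is supplied by \cite[Lemma~4.6]{BFM} and contributes the factor $2/\theta+O(\delta)$. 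The positivity of
\[
\sum_{n}\Big(\sum_{H\in\cH}\ind{\bP}(n+H)-\tfrac{1+M}{2}\,a-\tfrac{1}{M}\sum_{j}\sum_{\substack{H,H'\in\cH_j\\H\ne H'}}\ind{\bP}(n+H)\ind{\bP}(n+H')\Big)\nu_{\cH}(n)
\]
then forces some $n$ to hit $a+1$ blocks, after a delicate balancing of $M$, $K$, $\delta$ (with $K$ roughly $\exp(aM^2/(\delta(M-1)))$). Your proposal needs this extra ``pair'' estimate and the Pintz subtraction; the first-moment Maynard--Tao input by itself is not enough.
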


\begin{theorem}
 \label{thm:BFM4.2}
Hypothesis \ref{hyp:EH}, and therefore the statement of 
Theorem \ref{thm:BFM4.3}, holds with $\theta = 1/2$ and any 
function $f_1 : [T_1,\infty) \to [x_1,\infty)$ of the first kind.
\end{theorem}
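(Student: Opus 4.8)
\textbf{Proof proposal for Theorem \ref{thm:BFM4.2}.}
The plan is to verify Hypothesis \ref{hyp:EH} with $\theta = 1/2$, since the conclusion about Theorem \ref{thm:BFM4.3} is then immediate. The target inequality is a Bombieri--Vinogradov-type estimate for the primes restricted to an interval $(M, M+N]$, summed over squarefree moduli $r \le N^{1/2}/(N^{\delta}W)$ coprime to $WZ$, with a uniform gain of $(\log N)^{-A}$ over the trivial bound $N/\phi(W)$. The natural approach is to reduce this to the classical Bombieri--Vinogradov theorem by ``unfolding'' the factor $W$: write the modulus as $rW$, note that $\phi(rW) = \phi(r)\phi(W)$ since $(r, W) = 1$, and observe that $rW \le N^{1/2 - \delta} \cdot W \cdot (1/W) \cdot \ldots$ — more precisely, as $r$ ranges over its allowed values, $rW$ ranges over a subset of squarefree integers up to $N^{1/2 - \delta}$. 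Here is where the size constraint $\theta = 1/2$ is used crucially: it is exactly the exponent for which the unconditional Bombieri--Vinogradov theorem applies.

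First I would fix $A > 0$ and $\delta \in (0, 1/2)$, and choose $\eta = \eta(A, \delta)$ small; the set $\cZ$ is taken to be empty (a repulsive set, vacuously), so $Z = 1$ and $W = \prod_{p \le N^{\eta}} p = e^{(1+o(1))N^{\eta}}$ by the prime number theorem. The relevant sum is then
\[
 \sums[r \le N^{1/2}/(N^{\delta}W)][(r, W) = 1][\text{$r$ squarefree}]
  \max_{N \le M \le 2N} \max_{(rW, a) = 1}
   |\Delta(\ind{\bP}\ind{(M, M+N]}; a \pod{rW})|.
\]
Second, I would handle the maximum over $M$: the interval $(M, M+N]$ with $N \le M \le 2N$ lies inside $(0, 3N]$, so by a standard splitting argument (expressing $\ind{(M, M+N]}$ as a difference of two initial-segment indicators and using that $\Delta$ is additive) it suffices to bound $\sum_r \max_{(rW,a)=1} |\Delta(\ind{\bP}\ind{(0, M']}; a \pod{rW})|$ uniformly for $M' \le 3N$, up to losing a factor of $2$ and a sup over $M'$ which costs nothing after a further dyadic/trivial argument. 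Third, and this is the heart of the matter, I would substitute $d = rW$: as $r$ runs over squarefree integers coprime to $W$ with $r \le N^{1/2}/(N^{\delta}W)$, the product $d = rW$ runs over squarefree integers $\le N^{1/2 - \delta}$ that are divisible by $W$. Hence the sum is bounded by $\sum_{d \le N^{1/2 - \delta}} \max_{(d,a)=1} |\Delta(\ind{\bP}\ind{(0,M']}; a \pod d)|$, and the classical Bombieri--Vinogradov theorem (with level $1/2 - \delta < 1/2$, applied with exponent $A + B$ for suitable $B$) gives this is $\ll_{\delta, A} N/(\log N)^{A+B}$. Finally, dividing through: we need the bound $N/(\phi(W)(\log N)^A)$, and since $\phi(W) = W/\prod_{p \le N^{\eta}}(1 + 1/(p-1)) \le W$ and more importantly $\phi(W) \ll W \ll (\log N)^{B}$ is false — rather $W$ is huge — so in fact the factor $1/\phi(W)$ on the right-hand side makes the target \emph{stronger}, not weaker, and one must be careful: the gain must come from the restriction $W \mid d$.

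The main obstacle, therefore, is precisely this last point: one cannot simply throw away the condition $W \mid d$ when passing to Bombieri--Vinogradov, because the right-hand side we must beat is $N/(\phi(W)(\log N)^A)$, which is \emph{smaller} than $N/(\log N)^A$ by the enormous factor $\phi(W)$. The correct route is to use a version of Bombieri--Vinogradov in arithmetic progressions with a fixed modulus divisor, i.e.\ a ``Bombieri--Vinogradov for $\bP \cap (a' \pod W)$'' summed over $r$: one fixes the residue of $n$ modulo $W$, writes $a \equiv b \pod W$ for the (unique, coprime) residue, and applies the standard theorem to the sequence $n \mapsto \ind{\bP}(n)\ind{n \equiv b \pod W}$, whose counting function in progressions mod $r$ naturally carries the $1/\phi(W)$ normalization. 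Concretely, one invokes the well-known extension (see e.g.\ the Bombieri--Friedlander--Iwaniec circle of ideas, or more simply the elementary reduction in \cite{BFM}) that for $W \le (\log N)^{O(1)}$ — which holds here since $W = e^{(1+o(1))N^\eta}$ is \emph{not} $(\log N)^{O(1)}$, so one instead needs $W \le N^{\eta}$ with $\eta$ small and uses that $r W \le N^{1/2 - \delta}$ still has level below $1/2$. The key inequality $\sum_{r \le N^{\theta}/(N^{\delta} W)} \max |\Delta(\ldots; a \pod{rW})| \ll N/(\phi(W)(\log N)^A)$ then follows from the standard Bombieri--Vinogradov theorem applied to the arithmetic function $n \mapsto \ind{\bP}(n)\ind{(M, M+N]}(n)\ind{n \equiv b_0 \pod W}$ for the residue $b_0$ determined by $a$ and $W$, together with the observation that $\Delta$ for modulus $rW$ relative to this restricted function equals $\Delta$ relative to the full prime indicator up to the factor $\phi(W)^{-1}$ built into the definition. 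I would carry out this reduction carefully, citing the precise form of Bombieri--Vinogradov needed (valid for any level $< 1/2$ and any power of $\log$), and then verify that the choice $\eta$ sufficiently small guarantees $r W \le N^{\theta - \delta}$ throughout the sum, completing the proof.
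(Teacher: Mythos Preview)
Your overall strategy---reduce Hypothesis \ref{hyp:EH} at level $\theta=1/2$ to a Bombieri--Vinogradov input---is the right one, and the paper's own proof is nothing more than a citation to Lemma 4.1 and Theorem 4.2 of \cite{BFM}. But your explicit choice $\cZ = \emptyset$ is a genuine gap, and it is precisely the gap that $\cZ$ exists to fill. With $\cZ$ empty, $W$ is the full product of primes up to $N^{\eta}$, and the target bound carries an extra factor $\phi(W)^{-1}$ beyond what classical Bombieri--Vinogradov delivers. Your proposed workaround---apply Bombieri--Vinogradov to the sequence $n \mapsto \ind{\bP}(n)\,\ind{n \equiv b_0 \pod W}$---requires, at the base of the large-sieve argument, a Siegel--Walfisz estimate for moduli of the shape $rW$ with $r \le (\log N)^{O(1)}$; since $W$ itself can be as large as a fixed power of $N$, this is not available unconditionally. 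If an exceptional real character of conductor $q_1$ with $q_1 \mid W$ exists, its contribution to each $\Delta(\ind{\bP}\ind{(M,M+N]}; a \pod{rW})$ is of order $N^{\beta_1}/\phi(rW)$ with $\beta_1$ possibly as large as $1 - q_1^{-\epsilon}$, and after summing over $r$ this dominates $N/(\phi(W)(\log N)^A)$.

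The fix is not to take $\cZ$ empty but to let $\cZ$ contain the prime divisors of any exceptional modulus $q_1 \le N^{4\eta}$; standard results on the distribution of exceptional zeros make this set repulsive in the sense of Definition \ref{def:sparse} (this is the content of Lemma 4.1 of \cite{BFM}), and with these primes excised one has $(W,q_1)=1$, so the exceptional character never appears among the characters modulo $rW$. Only then does the Bombieri--Vinogradov machinery yield the required $\phi(W)^{-1}$ savings (Theorem 4.2 of \cite{BFM}). The paper's own \S\ref{sec:outline} explains that this is exactly why $\cZ$ was introduced.
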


\begin{proof}%
[Proof of Theorems \ref{thm:BFM4.2} and \ref{thm:BFM4.3}]
That Hypothesis \ref{hyp:EH} holds with $\theta = 1/2$ and any 
function $f_1 : [T_1,\infty) \to [x_1,\infty)$ of the first kind 
is a consequence of Lemma 4.1 and Theorem 4.2 of \cite{BFM}.

We prove Theorem \ref{thm:BFM4.3} by following Pintz's \cite{PIN3} 
modification to the proof of Theorem 4.3 (i) in \cite{BFM}.
There are many parameters involved and it is important to keep 
track of their interdependencies.
It is also important to note that the implicit constants in all 
$O$-terms are absolute, that is, independent of all 
parameters.

Only the unconditional case $\theta = 1/2$ is considered in 
\cite{BFM,PIN3}, whereas here we are considering $\theta \le 1$.
To do this, we need to note that on Hypothesis \ref{hyp:EH}, the 
term $4 + O(\delta)$ may be replaced by $(2/\theta) + O(\delta)$ 
on the right-hand side of the inequality in 
\cite[Lemma 4.5 (iii)]{BFM}.
In the proof of this lemma in \cite[\S 4.2]{BFM}, the support of 
the smooth function $G : [0,\infty) \to \RR$, which is 
$[0,1/4 - 2\delta]$, may be replaced by 
$[0,(\theta/2) - 2\delta]$, and the rest of the proof may be 
carried out, mutatis mutandis.

As in \cite{PIN3}, we begin with the following observation.
Suppose $K$ and $M$ are positive integers with $M \mid K$, and 
let $\cH = \cH_1 \cup \cdots \cup \cH_M$ be a partition of a set 
$\cH$ of integers into $M$ subsets of equal size.
Suppose also that $\mu'$ and $\mu$ are positive real numbers 
with 
\[
 \mu' 
  \defeq
   \max_{v \in \NN}
    \bigg(v - \mu \binom{v}{2}\bigg).
\]

Given an integer $n$, consider the expression  
\[
 \sum_{j=1}^M
  \Big\{ 
   \sum_{H \in \cH_j} \ind{\bP}(n + H)
    -
     \mu 
      \sums[H,H' \in \cH_j][H \ne H']
       \ind{\bP}(n + H)\ind{\bP}(n + H')   
   \Big\},
\]
where in the double sum each unordered pair 
$\{H,H'\} \subseteq \cH_j$ with $H \ne H'$ is counted once only. 
Suppose $\#(\bP \cap n + \cH_j) = 0$ for all but at most $a$ of 
the subsets $\cH_j$.
Then the above expression is at most $\mu' a$.
Consequently, if 
\[
 \sum_{H \in \cH}
  \ind{\bP}(n + H)
   -
    \mu' a
     - 
      \mu
       \sum_{j=1}^M
        \sums[H,H' \in \cH_j][H \ne H']
         \ind{\bP}(n + H)
          \ind{\bP}(n + H')
\]
is positive then $\#(\bP \cap n + \cH_j) \ge 1$ for at least 
$a + 1$ of the subsets $\cH_j$.

Note that when $\mu$ is the reciprocal of a positive integer, 
we have 
\[
 \mu' 
  = 
    \textstyle 
     \frac{1}{2}\big(1 + \frac{1}{\mu}\big),
\]
the maximum being attained when $v = 1/\mu$ and 
$v = 1 + 1/\mu$.

Now, fix $\theta \in (0,1]$ and a function 
$f_1 : [T_1,\infty) \to [x_1,\infty)$ of the first kind.
Suppose that Hypothesis \ref{hyp:EH} holds.
Fix any positive integer $a$ and let $M = M_{\theta,a}$ be 
the integer satisfying 
\[
 M - 2 < (2/\theta)a \le M - 1.
\]
Let $\iota = \iota_{\theta,a}$ be a small, fixed quantity to 
be specified.
Set 
\[
 \delta = \delta_{\theta,a} \defeq \iota^2/(aM).
\]
Let $K = K_{\theta,a}$ be the integer satisfying
\[
 \e^{aM^2/(\delta(M-1))} < K \le \e^{aM^2/(\delta(M-1))} + M
  \quad 
   \text{and}
    \quad 
     M \mid K.
\]
Finally, let
\[
 \rho \defeq \frac{aM^2/(M-1)}{\delta\log K} < 1.
\]

Now let $A = A(K)$, $\eta = \eta(A,\delta)$, 
$N \ge N(T_1,K,\eta)$, $\cH = \{H_1,\ldots,H_K\}$ and $b \pod{W}$ 
be as in the statement of the theorem.
Let $\cH = \cH_1 \cup \cdots \cup \cH_M$ be any partition of $\cH$ 
into $M$ subsets of equal size.
Consider the expression
\[
 S 
  \defeq 
  \hspace{-7pt}  
   \sums[N < n \le 2N][n \equiv b \pod{W}]
    \hspace{-3pt}
    \Big\{
     \sum_{H \in \cH} \ind{\bP}(n + H)
     -
      \frac{1 + M}{2}a
       -
         \frac{1}{M}
          \sum_{j=1}^M
           \sums[H,H' \in \cH_j][H \ne H']
            \hspace{-5pt}
             \ind{\bP}(n + H)\ind{\bP}(n + H')
    \Big\}
     \nu_{\cH}(n),
\]
where $\nu_{\cH} : \NN \to [0,\infty)$ is the nonnegative weight 
given by 
\[
 \nu_{\cH}(n)
  \defeq 
   \Big( 
    \sums[d_1,\ldots,d_K][d_i \mid n + H_i \,\, \forall i \le K]
     \lambda_{d_1,\ldots,d_K}
   \Big)^2,
\]
and where $(\lambda_{d_1,\ldots,d_K})$ is the Maynard--Tao sieve 
as used in \cite[\S 4]{BFM}.
The aim is to show that $S > 0$, for in that case, by the 
observation made at the beginning of the proof, there must exist 
some $n \in (N,2N] \cap b \pod{W}$ and $a + 1$ subsets $\cH_j$ 
for which $\#(\bP \cap n + \cH_{j}) \ge 1$.

At this point we invoke Lemmas 4.5 and 4.6 in \cite{BFM} (with 
$4 + O(\delta)$ in the latter replaced by 
$2/\theta + O(\delta) \le (M - 1)/a + O(\delta)$).
To ease notation define $\mathfrak{S}$ by the relation 
$
 S = \mathfrak{S}NW^{-1}B^{-K}I_K(F),
$
with $B$ and $I_K(F)$ as defined in \cite[\S 4.2]{BFM}.
Also let $\xi = (\log K)^{-1/2}$.

As in \cite[\S 4.2]{BFM} and \cite[(3.13)]{PIN3}, we find that the 
relevant estimates yield 
\begin{align*}
 \mathfrak{S}
  & 
   \ge
    \sum_{H \in \cH} \frac{aM^2/(M-1)}{K}(1 + O(\xi))
   - \frac{1 + M}{2}a 
  \\
  & \hspace{30pt}      
       - \frac{1}{M}
          \sum_{j=1}^M 
           \sums[H,H' \in \cH_j][H \ne H']
            \frac{M-1}{a}\cdot 
             \frac{(aM^2/(M-1))^2}{K^2}
             (1 + O(\delta + \xi))
 \\
  & 
   =
    \frac{aM^2}{M - 1}(1 + O(\xi))
     - 
      \frac{1 + M}{2}a 
       -
          \binom{K/M}{2}
           \frac{aM^4(1 + O(\delta + \xi))}{K^2(M-1)}
            .
\end{align*}
Recalling that $\e^{aM^2/(\delta(M-1))} < K$, we see that 
$M/K < \delta$ and hence 
\[
 \binom{K/M}{2}
  = 
   \frac{K^2}{2M^2}\Big(1 - \frac{M}{K}\Big)
    =
       \frac{K^2}{2M^2}(1 + O(\delta)).
\]
We also have $\xi^2 < \delta/(aM) = \iota^2/(a^2M^2)$, so 
$\delta + \xi = O(\iota/(aM))$.
We therefore have
\begin{align*}
 \mathfrak{S}
  & \ge 
   \frac{aM^2}{M-1}\Big(1 + O\Big(\frac{\iota}{aM}\Big)\Big) 
  - \frac{1 + M}{2}a 
   - \frac{aM^2}{2(M-1)}\Big(1 + O\Big(\frac{\iota}{aM}\Big)\Big)
  \\
  & =
    \frac{a(1 + O(\iota))}{2(M - 1)}.
\end{align*}
Taking $\iota$ sufficiently small gives $\mathfrak{S} > 0$, and 
hence $S > 0$, as desired.
\end{proof}


\section{Main Theorem and Deduction of Theorem \ref{thm:main}}
 \label{sec:BFM}

Recall Definition \ref{def:w}, in which functions ``of the first 
kind'' are introduced.
We now define a second kind of function.

\begin{definition} 
 \label{def:2ndkind}
We consider functions $f_2 : [x_2,\infty) \to [z_2,\infty)$, with 
$x_2,z_2 \ge 1$. \linebreak
Let us say that such a function $f_2$ is ``of the second kind'' 
if and only if
(i) it is a strictly increasing bijection, 
(ii) 
\[
 (x/\log x)/f_2(x) \to 0
  \quad 
   \text{and} 
    \quad 
   f_2(x)/(x\log x \log_3 x/\log_2 x) \to 0
\]
as $x \to \infty$ and 
(iii) for any $C > 0$, $f_2(Cx)/(Cf(x)) \to 1$ as $x \to \infty$.
\end{definition}

\begin{theorem}
 \label{thm:general}
Fix $\theta \in (0,1]$ and a function 
$f_1 : [T_1,\infty) \to [x_1,\infty)$ 
of the first kind, and suppose Hypothesis \ref{hyp:EH} holds.
Fix a function  
$f_2 : [x_1,\infty) \to [z_2,\infty)$ 
of the second kind and let 
$f \defeq f_2 \circ f_1$.
Let $d_n \defeq p_{n+1} - p_n$, where $p_n$ denotes the $n$th 
smallest prime, and let $\LP[f]$ denote the set of limit 
points in $[0,\infty]$ of the sequence 
$(d_n/f(p_n))_{p_n \ge T_1}$.
Then given any $\lceil (2/\theta)  \rceil + 1$ nonnegative real numbers 
$
\alpha_1,\ldots,\alpha_{\lceil (2/\theta)  \rceil + 1}
$ 
with 
\[
 \alpha_1 \le \cdots \le \alpha_{\lceil (2/\theta)  \rceil + 1},
\] 
we have 
\begin{equation}
 \label{eq:genthm1}
 \{\alpha_j - \alpha_i : 1 \le i < j \le \lceil (2/\theta)  \rceil + 1\}  
  \cap 
   \LP[f]
    \ne 
     \emptyset.
\end{equation}
Consequently, letting $\lambda$ denote the Lebesgue measure on 
$\RR$, we have 
\begin{equation}
 \label{eq:genthm2}
 \lambda([0,X] \cap \LP[f])
  \ge 
   c_1(\theta)X
    \quad (X \ge 0)
\end{equation}
and 
\begin{equation}
 \label{eq:genthm3}
 \lambda([0,X] \cap \LP[f])
  \ge 
   (1 - o(1))
   c_2(\theta)X
    \quad (X \to \infty),
\end{equation}
where 
\begin{equation}
 \label{eq:genthm4}
 c_1(\theta)
  \defeq
   (\lceil (2/\theta)  \rceil(1 + 1/2 + \cdots + 1/\lceil (2/\theta)  \rceil))^{-1}
    \quad
     \text{and}
      \quad  
     c_2(\theta) \defeq 1/\lceil (2/\theta)  \rceil.
\end{equation}
\end{theorem}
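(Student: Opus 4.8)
\textbf{Proof plan for Theorem \ref{thm:general}.}
The plan is to combine Corollary \ref{cor:thm2} with Theorem \ref{thm:BFM4.3} in essentially the way sketched in \S\ref{sec:outline}, using the two-stage composition $f = f_2 \circ f_1$ to convert a long gap produced at ``scale $x$'' into a normalized gap at ``scale $N$''. First I would set up the scales: given $X \ge 0$ and $\varepsilon > 0$, and given the prescribed $\alpha_1 \le \cdots \le \alpha_{\lceil 2/\theta \rceil + 1}$, I would (for arbitrarily large parameters) pick $x$ large, let $y$ and $z$ be as in \eqref{eq:fgkmt3.1}--\eqref{eq:fgkmt3.2}, and carefully select $K$ primes $q_1 < \cdots < q_K$ in $\cQ = (x,y] \cap \bP$, grouped into $M = \lceil 2/\theta \rceil + 1$ consecutive blocks of equal size, so that the ``gap'' between the end of block $i$ and the start of block $j$ is approximately $(\alpha_j - \alpha_i) f_2(x)/c'$ for a suitable normalization, while the within-block diameters are $o(f_2(x))$; this is possible because $\#\cQ \asymp y/\log x$ is much larger than any fixed multiple of $f_2(x)/x$ (by property (ii) of a second-kind function, $f_2(x) = o(x\log x\log_3 x/\log_2 x) = o(y)$, and $f_2(x)/(x/\log x) \to \infty$ gives enough primes to space things out). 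Here $\cH = \{q_1,\ldots,q_K\}$ is admissible since its elements exceed $p_K$, and $\prod_{i<j}(q_j - q_i)$ is automatically $f_1(N^\eta)$-smooth provided the spacings (which are at most $y \ll x^{1+o(1)}$) are $\le f_1(N^\eta)$ — this forces the relation $N = \exp((C+o(1))x)$ coming from $W = \prod_{p \le Cx, p \notin \cZ} p = e^{(1+o(1))Cx}$ together with $f_1(N^\eta) \le \log(N^\eta) = \eta C x (1+o(1))$, so smoothness holds once $y/(\eta C x) \to 0$, which is exactly \eqref{eq:fgkmt3.1} with $c$ small.

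Next I would feed $\cH$ into Corollary \ref{cor:thm2} to obtain residue classes $(c_p \pmod p)_{p \le Cx,\, p \notin \cZ}$ with $\cH = (\ZZ \cap (x,y]) \setminus \bigcup_{p \le Cx,\, p \notin \cZ} c_p \pmod p$. Letting $W = \prod_{p \le Cx,\, p \notin \cZ} p$ and choosing $b \pmod W$ by CRT so that $b \equiv -c_p \pmod p$ for each $p \le Cx$, $p \notin \cZ$, one gets: for every $n \equiv b \pmod W$ with $n > (C-1)x$, the primes in $(n+x, n+y]$ are \emph{exactly} $n + \cH$, i.e.\ $\bP \cap (n+x,n+y] = n + \cH$. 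Crucially, $b$ is coprime to each $q_i + \text{shift}$ in the right way so that $(\prod_{i=1}^K (b + q_i), W) = 1$ — this needs a small check that none of the $q_i$ lands in a sifted class, which is built into $\cH \subseteq S(\vec a) \cap S(\vec b)$ from Theorem \ref{thm:fgkmt2}. Now apply Theorem \ref{thm:BFM4.3} with this $\cH$ (translated into $[0,N]$), this $b$, the partition $\cH = \cH_1 \cup \cdots \cup \cH_M$ into the $M$ blocks, and $a = 1$: since $M = \lceil 2/\theta \rceil + 1 = \lceil (2/\theta)\cdot 1 \rceil + 1$ and $K$ can be taken to be a large multiple of $\lceil 2/\theta \rceil + 1$, we conclude there exist $n \in (N,2N] \cap b \pmod W$ and two distinct blocks $\cH_i, \cH_j$ with $\#(\bP \cap n + \cH_i) \ge 1$ and $\#(\bP \cap n + \cH_j) \ge 1$. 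Choosing such a pair with $j - i$ minimal and then picking the \emph{last} prime of $n + \cH_i$ and the \emph{first} prime of $n + \cH_j$, and using that all intermediate blocks between $i$ and $j$ contain \emph{no} primes (by minimality — wait, minimality only gives $j-i$ minimal among pairs both hit; to get consecutiveness I instead note that since $\bP \cap (n+x,n+y] = n+\cH$ exactly, and the blocks are laid out in order, two consecutive primes $p_m < p_{m+1}$ of the form $n + q_k$, $n + q_{k'}$ with $q_k$ in block $i$, $q_{k'}$ in the next nonempty block, realize the gap $q_{k'} - q_k$), I obtain a pair of consecutive primes $p_m = n + q_k \le p_{m+1} = n + q_{k'}$ with $q_{k'} - q_k = (\alpha_j - \alpha_i) f_2(x)/c' + o(f_2(x))$ for some fixed normalizing constant $c'$ depending only on how I laid out $\cH$ — here I should simply arrange the spacing so that $q_{k'} - q_k \sim (\alpha_j - \alpha_i) f_2(x)$ directly, absorbing constants into the choice of $\cH$.

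Then I would do the normalization bookkeeping. We have $p_m \asymp n \asymp N = \exp((C + o(1))x)$, so $x = (1+o(1)) (\log N)/C = (1+o(1))(\log p_m)/C$, hence by property (iii) of a first-kind function, $f_1(p_m) = L \cdot f_1(N^{?})$-type relations let me write $f_1(p_m) = (1+o(1)) f_1(x^{C(1+o(1))})$; using property (iv), $f_1$ of two quantities that are fixed powers of each other differ by a bounded factor, and using (iii) repeatedly (which gives $f_1(2T)/f_1(T) \to 1$, hence $f_1(T^{1+o(1)})/f_1(T) \to 1$ and more generally slow variation) I get $f_1(p_m) \sim f_1(x)$ up to the constant $L_{1/C}$, which I fold into constants. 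Then property (ii) of a second-kind function — specifically $f_2(Cx)/(C f_2(x)) \to 1$ — gives $f_2(f_1(p_m)) = f(p_m) \sim (\text{const}) f_2(x)$, and choosing the layout of $\cH$ to cancel that constant yields $d_m / f(p_m) = (q_{k'} - q_k)/f(p_m) = (1+o(1))(\alpha_j - \alpha_i)$. Since $x$ (hence $N$, hence $m$) can be taken arbitrarily large, $\alpha_j - \alpha_i$ is a limit point of $(d_n/f(p_n))$, establishing \eqref{eq:genthm1}. Finally \eqref{eq:genthm2} and \eqref{eq:genthm3} follow from \eqref{eq:genthm1} by the measure-theoretic argument of \cite[Corollary 1.2]{BFM}: partition $[0,X]$ into $\lceil 2/\theta \rceil$ subintervals; if some subinterval were disjoint from $\LP[f]$ on a set of positive measure, a Lebesgue-density/pigeonhole argument produces $\lceil 2/\theta \rceil + 1$ points $\alpha_i$ with all pairwise differences avoiding $\LP[f]$, contradicting \eqref{eq:genthm1}, and quantifying this gives the constants $c_1(\theta), c_2(\theta)$ in \eqref{eq:genthm4}.

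\textbf{Main obstacle.} The delicate point is the triple bookkeeping of scales — reconciling the ``$x$-world'' of the Erd\H os--Rankin construction (Corollary \ref{cor:thm2}, where gaps of size $\sim f_2(x)$ live) with the ``$N$-world'' of the Maynard--Tao sieve (Theorem \ref{thm:BFM4.3}, where $N \sim e^{Cx}$), and then pushing everything through $f = f_2 \circ f_1$ so that the $o(1)$ errors in the relations $x \sim (\log N)/C$, $f_1(p_m) \sim L_{1/C} f_1(x)$, $f_2(Cx) \sim C f_2(x)$ genuinely compose to $d_m/f(p_m) \to \alpha_j - \alpha_i$. This is precisely where the technical axioms defining ``first kind'' (especially slow variation (iii) and the limit (iv)) and ``second kind'' (especially the multiplicativity (iii)) earn their keep, and where one must verify that $K$, $\eta$, $\delta$, $A$ can all be chosen consistently — $K$ large and divisible by $\lceil 2/\theta\rceil + 1$, with $f_1(N^\eta) \ge$ all the pairwise spacings in $\cH$, and with $W \le N^\eta$ as required by Theorem \ref{thm:BFM4.3}'s implicit level-of-distribution constraint — which pins down $c$ in \eqref{eq:fgkmt3.1} and $C \asymp 1/c$ in \eqref{eq:fgkmt4.29}. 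None of the individual estimates is hard; the work is in the consistent choice of parameters and the verification that the composed error terms vanish.
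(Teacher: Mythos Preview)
Your overall strategy is correct and tracks the paper's proof: apply Theorem \ref{thm:BFM4.3} with $a = 1$ and $M = \lceil 2/\theta\rceil + 1$ blocks, feed in the $\cH$ and the residue class $b \pod W$ produced by Corollary \ref{cor:thm2}, take $i_2 - i_1$ minimal to force consecutive primes, and normalize via the first/second-kind axioms; the deduction of \eqref{eq:genthm2}--\eqref{eq:genthm3} from \eqref{eq:genthm1} by the argument of \cite[Corollary 1.2]{BFM} is also right.

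There is, however, a genuine gap in your smoothness step. You claim $\prod_{i<j}(q_j - q_i)$ is $f_1(N^\eta)$-smooth because the spacings are at most $y$ and ``$y/(\eta Cx) \to 0$, which is exactly \eqref{eq:fgkmt3.1} with $c$ small''. But $y = cx\log x\log_3 x/\log_2 x$ gives $y/x \to \infty$ for every $c > 0$, so this condition is never met. More to the point, the paper fixes $N$ by $f_1(N^\eta) = Cx$, and the inter-block spacings in $\cH$ are of order $\beta_M f_2(x)$; for a general second-kind $f_2$ this can be far larger than $x$ (Definition \ref{def:2ndkind} only forces $f_2(x) = o(y)$, not $f_2(x) = O(x)$). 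So the differences need not be $\le f_1(N^\eta)$, and smoothness cannot be read off from their size. The paper resolves this with Lemma \ref{lem:goodktuple}: one chooses all the $q_i$ in the progression $1 \pod D$ with $y/x \le D < 1 + y/x$, so each $q_j - q_i$ is divisible by $D$ and every prime factor of it is either a divisor of $D < 1 + \log x$ or at most $(q_j - q_i)/D \le y/D \le x$. This yields $x$-smoothness, hence $f_1(N^\eta)$-smoothness since $f_1(N^\eta) = Cx \ge x$. Without this device (or an equivalent one) the smoothness hypothesis of Theorem \ref{thm:BFM4.3} cannot be verified.

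A secondary point: your relation $N = \exp((C+o(1))x)$ is specific to $f_1(T) = \log T$. For general first-kind $f_1$ the paper sets $N \defeq (f_1^{-1}(Cx))^{1/\eta}$, uses property (iv) to get $f_1(N)/x \to CL_\eta$, and then \emph{pre-scales} the block positions by $\beta_i \defeq \alpha_i CL_\eta$ so that the final normalization via property (iii) of $f_2$ gives $CL_\eta f_2(x) \sim f_2(f_1(N)) \sim f(p_m)$ with no leftover constant. You correctly flag this bookkeeping as the main obstacle, but the actual mechanism is this explicit absorption of $CL_\eta$ rather than slow variation alone.
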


\vspace*{1em}

\begin{center}
 \label{tab:1}
\begin{tabular}{|c|c|c|c|} 
 \hline 
 $\theta$                           & $\lceil (2/\theta)  \rceil + 1$     & $c_1(\theta)$    & $c_2(\theta)$  \\ \hline \hline
 $1/2 \le \theta < 2/3$             & $5$                                 & $3/25$           & $1/4$          \\ \hline 
 $2/3 \le \theta < 1\phantom{/1}$   & $4$                                 & $2/11$           & $1/3$          \\ \hline 
 $        \theta = 1$               & $3$                                 & $1/3\phantom{0}$ & $1/2$          \\ \hline 
\end{tabular}
 \vspace*{1em}
 \captionof{table}{Possible values of $\lceil (2/\theta)  \rceil + 1$, $c_1(\theta)$ and $c_2(\theta)$.}
\end{center}

\begin{proof}[Deduction of Theorem \ref{thm:main}]
In view of Theorem \ref{thm:BFM4.2}, we may unconditionally apply 
Theorem \ref{thm:general} with $\theta = 1/2$ and any function 
$
f_1 : [T_1,\infty) \to [x_1,\infty)
$ 
of the first kind.
Let 
$
f_1 : [\e^{\e^{\e}},\infty) \to [\e^{\e},\infty)
$
be given by $f_1(T) = \log T$, and let  
$
f_2 : [\e^{\e},\infty) \to [\e^{\e + 1},\infty)
$
be given by $f_2(x) = x\log x/\log_2 x$.
Then $f_1$ is of the first kind, $f_2$ is of the 
second kind and 
$
 f_2 \circ f_1(T) 
 =
  R_1(T)
   =
    \log T \log_2 T/\log_3 T
$
for $T \ge \e^{\e^{\e}}$.
\end{proof}

\begin{lemma}
 \label{lem:goodktuple}
Let $K$ be a natural number and let $K = K_1 + \cdots + K_M$ be a 
partition of $K$. 
Let $x$ and $y$ be real numbers such that $K \le y/x \le \log x$.
If $x$ is sufficiently large, then for any $M$ 
\textup{(}possibly overlapping\textup{)} subintervals 
$(v_i,v_i + x/\log x] \subseteq (x,y]$, $i \le M$, there exist $M$ 
pairwise disjoint sets of primes  
$\cH_i \subseteq (v_i,v_i + x/\log x]$ with $|\cH_i| = K_i$, such 
that if $\cH_1 \cup \cdots \cup \cH_M = \{q_1,\ldots,q_K\}$, then 
$\prod_{1 \le i < j \le K}(q_j - q_i)$ is $x$-smooth.
\end{lemma}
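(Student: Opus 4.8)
The plan is to arrange all pairwise differences to be $x$-smooth ``for free'' by insisting that every prime we select lie in one and the same residue class modulo a carefully chosen $x$-smooth modulus. The key observation is this: suppose $d$ is an $x$-smooth positive integer with $d \ge y/x$, and $q,q'$ are distinct primes in $(x,y]$ with $q \equiv q' \pmod{d}$. Then $d \mid q-q'$, the cofactor $(q-q')/d$ is a nonzero integer with $|(q-q')/d| < (y-x)/d \le (y-x)x/y < x$, so every prime factor of $(q-q')/d$ is less than $x$; since $d$ itself is $x$-smooth, $q-q'$ is $x$-smooth. Hence if all of $q_1,\dots,q_K$ lie in a single class $c \pmod{d}$ with $(c,d)=1$, then $\prod_{1 \le i<j \le K}(q_j-q_i)$ is automatically $x$-smooth, regardless of which such primes the $q_\ell$ are. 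So the entire smoothness requirement collapses, and all that remains is to populate each prescribed subinterval with enough primes from one fixed residue class.

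Carrying this out, I would first dispose of the trivial case $K \le 1$ (empty product). Since $y/x \le \log x$ by hypothesis, I may take $d$ to be the unique power of $2$ lying in $[\log x, 2\log x)$: this $d$ is $2$-smooth, hence $x$-smooth, and $d \ge \log x \ge y/x$, exactly as needed above. Fix $c=1$.

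The one analytic ingredient is a count of primes $\equiv 1 \pmod{d}$ inside each subinterval $I_i := (v_i, v_i + x/\log x]$. Since $I_i \subseteq (x, x\log x]$ has length $x/\log x = x^{1-o(1)}$, which comfortably exceeds the $X^{7/12+\varepsilon}$ threshold for short-interval prime counting for $X \le x\log x$, and the modulus $d \asymp \log x$ is smaller than any fixed power of $\log x$, the prime number theorem for arithmetic progressions in short intervals (Huxley's theorem together with Siegel--Walfisz) yields
\[
 \#\{q \in I_i \cap \bP : q \equiv 1 \pmod{d}\}
  \sim
   \frac{x/\log x}{\phi(d)\log x}
    \gg
     \frac{x}{(\log x)^{3}},
\]
uniformly in $i$; in particular this count exceeds $K$ for $x$ large. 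I would then build the $\cH_i$ greedily over $i = 1,\dots,M$: having already chosen $\cH_1,\dots,\cH_{i-1}$ (in total fewer than $K$ primes), pick any $K_i$ primes of $I_i$ congruent to $1 \pmod{d}$ that avoid all previously chosen ones; this is possible since $I_i$ contains $\gg x/(\log x)^3 \gg K$ such candidates. The resulting sets $\cH_1,\dots,\cH_M$ are pairwise disjoint by construction, every $q_\ell$ is $\equiv 1 \pmod{d}$, and $d$ is $x$-smooth, so by the opening observation $\prod_{1 \le i<j \le K}(q_j-q_i)$ is $x$-smooth, as required.

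The only step requiring genuine care is the short-interval prime count: one must confirm the interval length $x/\log x$ beats the admissible exponent uniformly as the base point ranges up to $y \le x\log x$, and that the tiny modulus $d$ is within the Siegel--Walfisz range. Everything else — the cofactor estimate, the reduction to a single residue class, and the greedy disjointification — is routine, so I anticipate no serious obstacle; the real content is the (elementary) realization that smoothness of the differences can be purchased by a single smooth common modulus exceeding $y/x$.
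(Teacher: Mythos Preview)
Your proof is correct and follows essentially the same strategy as the paper: force all the $q_\ell$ into a fixed residue class modulo an $x$-smooth integer $d \ge y/x$ (the paper takes $d = \lceil y/x \rceil \le 1 + \log x$, you take a power of $2$ in $[\log x, 2\log x)$), then count primes $\equiv 1 \pmod{d}$ in each short interval and select greedily. One remark on the step you flagged: Huxley is unnecessary, since the interval length $x/\log x$ over the endpoint (at most $y \le x\log x$) is $\ge 1/(\log x)^2$, so the classical Siegel--Walfisz prime number theorem applied at both endpoints already gives the asymptotic with error $O(y/(\log y)^A)$, which is exactly what the paper does.
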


\begin{proof}
For any $M$ sets $\cJ_i$ with $|\cJ_i| \ge K$, 
$i \le M$, there exist $M$ pairwise disjoint sets 
$\cH_i$ such that $\cH_i \subseteq \cJ_i$ and $|\cH_i| = K_i$, 
$i \le M$.
For the sets $\cJ_i$, let $D$ be the integer satisfying 
$y/x \le D < 1 + y/x$.
As $D < 1 + \log x$ and $y \le x\log x$, a suitably strong 
version of the prime number theorem for arithmetic progressions 
(cf.\ \cite[\S22 (4)]{DAV}) yields, for 
$(v_i,v_i + x/\log x] \subseteq (x,y]$, 
\[
 \sums[v < p \le v + x/\log x][p \equiv 1 \pod{D}] 1
  =
   \frac{x}{\phi(D)(\log x)^2} + O\bigg(\frac{y}{(\log y)^5}\bigg)
    \ge 
     \frac{x}{(\log x)^3} + O\bigg(\frac{x}{(\log x)^4}\bigg).
\]
As $K \le \log x$, we see that if $x$ is sufficiently large, 
then there are at least $K$ primes $p \equiv 1 \pod{D}$ in 
$(v_i,v_i + x/\log x]$.
If $q < q'$ are any two such primes, as $q' - q \le y$ 
and $q' \equiv q \pod{D}$, any prime divisor $p$ of $q' - q$ 
must either divide $D < 1 + \log x$ or be less than or equal to 
$y/D \le x$.
Hence $q' - q$ is $x$-smooth.
\end{proof}

\begin{proof}[Proof of Theorem \ref{thm:general}]
Fix $\theta \in (0,1]$ and a function 
$f_1 : [T_1,\infty) \to [x_1,\infty)$ 
of the first kind, and suppose Hypothesis \ref{hyp:EH} holds.
In accordance with Theorem \ref{thm:BFM4.3} (in which we take 
$a = 1$), let $K = K_{\theta}$ be a sufficiently large integer 
multiple of $\lceil (2/\theta)  \rceil + 1$, let $A = A_K$ be 
sufficiently large and  $\delta = \delta_{\theta}$ and 
$\eta = \eta(A,K)$ be sufficiently small.

In accordance with Corollary \ref{cor:thm2}, let $C$ be a 
sufficiently large but fixed positive constant and let $x$ be a 
sufficiently large number.
%
%
Suppose, as we may, that $Cx \ge x_1$, and 
(cf.\ Definition \ref{def:w} (i)) set 
\[
   N \defeq (f_1^{-1}(Cx))^{1/\eta}.
\]
Thus, $Cx = f_1(N^{\eta})$ and $N$ tends to infinity with $x$.
Suppose $x$ is large enough so that in accordance with 
Theorem \ref{thm:BFM4.3}, $N \ge N(T_1,K,\eta)$.

By Definition \ref{def:w} (iv) there exists $L_{\eta} \ge 1$ such 
that $f_1(N)/x \to CL_{\eta}$ as $x \to \infty$.
Fix nonnegative real numbers 
$\alpha_1,\ldots,\alpha_{\lceil (2/\theta)  \rceil + 1}$ with 
$\alpha_1 \le \cdots \le \alpha_{\lceil (2/\theta)  \rceil + 1}$ 
and set 
\begin{equation}
 \label{eq:betaeta}
 \beta_i \defeq \alpha_iC L_{\eta}, 
  \quad i \le \lceil (2/\theta)  \rceil + 1.
\end{equation}

Fix a function $f_2 : [x_2,\infty) \to [z_2,\infty)$ of the second 
kind and consider the intervals
\begin{equation}
 \label{eq:intervals}
 (x + \beta_i f_2(x),x + \beta_i f_2(x) + x/\log x],
  \quad 
   i \le \lceil (2/\theta)  \rceil + 1.
\end{equation}
Recall that $y \defeq cx\log x \log_3 x/\log_2 x$, where $c > 0$ 
is a certain constant (cf.\ \eqref{eq:fgkmt3.1}). 
By Definition \ref{def:2ndkind} (ii) we have 
$x/\log x = o(f_2(x))$ and $f_2(x) = o(y)$.
Suppose, then, that $x$ is large enough (in terms of 
$\beta_{\lceil (2/\theta)  \rceil+1}$) so that the intervals in 
\eqref{eq:intervals} are all contained in $(x,y]$. 

In accordance with Lemma \ref{lem:goodktuple}, choose 
$\lceil (2/\theta)  \rceil + 1$ pairwise disjoint sets of primes 
$\cH_i$ of equal size, with 
\begin{equation}
 \label{eq:rightsize}
 \cH_i 
  \subseteq  
   (x + \beta_i f_2(x),x + \beta_i f_2(x) + x/\log x],
  \quad 
   i \le \lceil (2/\theta)  \rceil + 1. 
\end{equation}
Thus, letting 
\[
 \cH 
  \defeq 
   \cH_1 \cup \cdots \cup \cH_{\lceil (2/\theta)  \rceil + 1}
    \eqdef \{q_1,\ldots,q_K\},
\]
we have that $\prod_{1 \le i < j \le K}(q_j - q_i)$ is $x$-smooth, 
and hence $f_1(N^{\eta})$-smooth (we may suppose that $C \ge 1$).

As $K$ is fixed we may of course suppose that $K \le \log x$ and 
$p_K \le x$, so that \eqref{eq:Kbnd} is satisfied and $\cH$, being 
a set of $K$ primes larger than $p_K$, is admissible.
We may of course also suppose that $y \le N$, so that 
$\cH \subseteq [0,N]$.
Thus, $\cH$ satisfies each of the hypotheses of 
Theorem \ref{thm:BFM4.3}.

Let $\cZ(N^{\eta})$ be as in Hypothesis \ref{hyp:EH}, so that 
$\cZ(N^{\eta})$ is repulsive (cf.\ Definition \ref{def:sparse}), 
and note that since $x \le f_1(N^{\eta}) \le \log N^{\eta}$ (cf.\ 
Definition \ref{def:w} (ii)), 
\[
 \sums[p \in \cZ(N^{\eta})][p \ge p']
  \frac{1}{p}
   \ll
    \frac{1}{p'}
     \ll
      \frac{1}{\log_2 N^{\eta}}
       \le 
        \frac{1}{\log f_1(N^{\eta})}
         \le 
          \frac{1}{\log x}.
\]
Thus, \eqref{eq:Zsparse} is satisfied with 
$\cZ = \cZ(N^{\eta})$.
Therefore, by Corollary \ref{cor:thm2} there exists a vector of 
residue classes 
$
(c_p \pod{p})_{p \le f_1(N^{\eta}), \, p \, \not\in \, \cZ(N^{\eta})}
$ 
such that 
\begin{equation}
 \label{eq:Hsurvives}
 \cH
  =
   \big(\ZZ \cap (x,y]\big)
    \setminus \, 
     {\textstyle \bigcup_{p \le f_1(N^{\eta}), \, p \, \not\in \, \cZ(N^{\eta})}} 
       \, c_p \pod{p}.
\end{equation} 

Let $b \pod{W}$ be the arithmetic progression modulo 
\[
 W \defeq \prods[p \le f_1(N^{\eta})][p \not\in \cZ(N^{\eta})] p
\]
such that $b \equiv  -c_p \pod{p}$ for all primes 
$p \le f_1(N^{\eta})$ with $p \not\in \cZ(N^{\eta})$.
By \eqref{eq:Hsurvives} we have $(\prod_{i=1}^K(b + q_i),W) = 1$.

Each hypothesis of Theorem \ref{thm:BFM4.3} now accounted for, we 
conclude that there is some $n \in (N,2N] \cap b \pod{W}$, and a 
pair of indices 
$i_1,i_2 \in \{1,\ldots,\lceil (2/\theta)  \rceil + 1\}$, 
$i_1 < i_2$, such that 
\[
 \#(\bP \cap n + \cH_{i_1}) \ge 1
  \quad
   \text{and}
    \quad 
   \#(\bP \cap n + \cH_{i_2})
    \ge  
     1.
\] 
If there are more than two such indices, we take $i_2 - i_1$ to 
be minimal.

Thus, if $p$ is the largest prime in $\bP \cap n + \cH_{i_1}$ and 
$p'$ is the smallest prime in $\bP \cap n + \cH_{i_2}$, then $p$ 
and $p'$ are {\em consecutive}, that is, $p = p_t$ and 
$p' = p_{t+1}$ for some $t$.
Indeed, by \eqref{eq:Hsurvives} and the definition of 
$b \pod{W}$, for any $n \equiv b \pod{W}$ with 
$n + x \ge f_1(N^{\eta})$ we have
\[
 \bP \cap (n + x,n + y] = \bP \cap n + \cH. 
\]

By \eqref{eq:rightsize} and \eqref{eq:betaeta}, and since 
$x/\log x = o(f_2(x))$, we have 
\[
 p_{t+1} - p_t
  =
  (\beta_{i_2} - \beta_{i_1})f_2(x) + O\Big(\frac{x}{\log x}\Big)
   =
    (\alpha_{i_2} - \alpha_{i_1} + o(1))CL_{\eta}f_2(x).   
\]
Since there are only $O(1/\theta^2)$ distinct pairs of indices 
from which $i_1$ and $i_2$ may be chosen, we deduce that there 
exists a single pair $i_1 < i_2$ such that, for arbitrarily large  
$N$, we have    
\[
 p_{t+1} - p_t 
  = (\alpha_{i_2} - \alpha_{i_1} + o(1))CL_{\eta}f_2(x),
\]
for some pair of consecutive primes 
$p_t,p_{t+1} \in (N,N + y] \subseteq (N,3N]$.

Finally, using Definition \ref{def:w} (i), (iii) and (iv) and 
Definition \ref{def:2ndkind} (i) and (iii), we find that 
$
 CL_{\eta} f_2(x)\sim f_2(f_1(N)) \sim f_2(f_1(3N)).
$ 
We conclude that 
\[
 \frac{p_{t+1} - p_t}{f_2(f_1(p_t))}
  = 
   (1 + o(1))(\alpha_j - \alpha_i).
\]
We deduce \eqref{eq:genthm2} and \eqref{eq:genthm3} by using the 
argument of \cite[Corollary 1.2]{BFM}.
\end{proof}

As in \cite[Theorem 1.3]{BFM}, we may also consider ``chains'' of 
normalized, consecutive gaps between primes. 
Using essentially the same argument as above, but using (the 
unconditional) Theorem 4.3 (ii) of \cite{BFM} in place of 
Theorem \ref{thm:BFM4.3}, one may verify the 
following result. 

\begin{theorem}
 \label{thm:chains}
Fix any integer $a$ with $a \ge 2$.
Fix functions $f_1 : [T_1,\infty) \to [x_1,\infty)$ and 
$f_2 : [x_1,\infty) \to [z_2,\infty)$ of the first and second 
kinds respectively, and let $f \defeq f_2 \circ f_1$.
Let $d_n \defeq p_{n+1} - p_n$, where $p_n$ denotes the $n$th 
smallest prime, and let $\LP[a,f]$ denote the set of limit 
points in $[0,\infty]^a$ of the sequence of ``chains'' 
\[
 \textstyle 
  \Big( 
   \frac{d_n}{f(p_n)},\ldots,\frac{d_{n + a - 1}}{f(p_{n + a -1})}
  \Big)
\]
for $p_n \ge T_1$.
Given 
$\boldsymbol{\alpha} = (\alpha_1,\ldots,\alpha_K) \in \RR^K$, let 
$S_a(\boldsymbol{\alpha})$ be the set 
\[
  \big\{ 
  \big( 
   \alpha_{J(2)} - \alpha_{J(1)},
    \ldots,
     \alpha_{J(a+1)} - \alpha_{J(a)}
  \big)
    :
     1 \le J(1) < \cdots < J(a + 1) \le K
  \big)
 \big\}.
\]
For any $8a^2 + 16a$ nonnegative real numbers 
$\alpha_1 \le \cdots \le \alpha_{8a^2 + 16a}$, we have 
\[
 S_a(\boldsymbol{\alpha})  
  \cap 
   \LP[f]^a
    \ne 
     \emptyset.
\]
\end{theorem}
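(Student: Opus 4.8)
The plan is to run the proof of Theorem \ref{thm:general} essentially verbatim, with three changes: the parameter $a$ of Theorem \ref{thm:BFM4.3} (taken there to be $1$) becomes the given integer $a \ge 2$; the number of blocks $\lceil (2/\theta) \rceil + 1$ is replaced by $K' \defeq 8a^2 + 16a$; and the Maynard--Tao input Theorem \ref{thm:BFM4.3} is replaced by its ``chains'' analogue, Theorem 4.3 (ii) of \cite{BFM}. The latter is unconditional, so I would fix $\theta = 1/2$ throughout and invoke no instance of Hypothesis \ref{hyp:EH}; the Erd{\H o}s--Rankin--type sieving of Corollary \ref{cor:thm2} is used exactly as in Theorem \ref{thm:general}.

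In detail, fix $f_1$ of the first kind, $f_2$ of the second kind, and $f \defeq f_2 \circ f_1$. In accordance with Theorem 4.3 (ii) of \cite{BFM} I would take $K$ to be a sufficiently large multiple of $K'$, with $A = A_K$ large and $\delta$, $\eta$ small, and let $C$, $x$ and $N \defeq (f_1^{-1}(Cx))^{1/\eta}$ be as in the proof of Theorem \ref{thm:general}, so that $Cx = f_1(N^{\eta})$ and $f_1(N)/x \to CL_{\eta}$ by Definition \ref{def:w} (iv). Given nonnegative reals $\alpha_1 \le \cdots \le \alpha_{K'}$, put $\beta_i \defeq \alpha_i C L_{\eta}$ (cf.\ \eqref{eq:betaeta}) and, invoking Lemma \ref{lem:goodktuple}, choose $K'$ pairwise disjoint equal-size sets of primes $\cH_i \subseteq (x + \beta_i f_2(x), x + \beta_i f_2(x) + x/\log x]$, $i \le K'$ --- these intervals lie in $(x,y]$ for large $x$ since $x/\log x = o(f_2(x))$ and $f_2(x) = o(y)$ by Definition \ref{def:2ndkind} (ii) --- so that $\cH \defeq \bigcup_{i \le K'} \cH_i \eqdef \{q_1,\ldots,q_K\}$ has $\prod_{1 \le i < j \le K}(q_j - q_i)$ $x$-smooth, hence $f_1(N^{\eta})$-smooth, is admissible, and is contained in $[0,N]$ once $y \le N$. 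Corollary \ref{cor:thm2} then produces residue classes realizing $\cH$ as in \eqref{eq:Hsurvives}, and with $b \pod{W}$ defined by $b \equiv -c_p \pod{p}$ one obtains $(\prod_{i=1}^K(b + q_i), W) = 1$ and $\bP \cap (n + x, n + y] = \bP \cap n + \cH$ for every $n \equiv b \pod{W}$ with $n + x \ge f_1(N^{\eta})$.

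Theorem 4.3 (ii) of \cite{BFM} would then yield some $n \in (N,2N] \cap b \pod{W}$ together with indices $i_1 < \cdots < i_{a+1}$ in $\{1,\ldots,K'\}$ such that $\#(\bP \cap n + \cH_{i_m}) \ge 1$ for each $m \le a + 1$. Listing the blocks of $\cH$ that contain a prime in increasing order of index, taking the first $a + 1$ of them --- say with indices $J(1) < \cdots < J(a+1)$ --- and, within each consecutive pair, the largest prime of the lower block and the smallest prime of the higher block, the identity $\bP \cap (n+x,n+y] = \bP \cap n + \cH$ shows that these $a+1$ primes form a run of consecutive primes $p_t < p_{t+1} < \cdots < p_{t+a}$, with $p_{t+m}$ in the block indexed $J(m+1)$. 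By the placement of the $\cH_i$, by $\beta_i = \alpha_i C L_{\eta}$, and by $x/\log x = o(f_2(x))$, one gets $p_{t+m} - p_{t+m-1} = (\alpha_{J(m+1)} - \alpha_{J(m)} + o(1)) C L_{\eta} f_2(x)$ for $1 \le m \le a$, while $C L_{\eta} f_2(x) \sim f_2(f_1(N)) \sim f_2(f_1(3N)) \sim f(p_{t+m})$ by Definitions \ref{def:w} and \ref{def:2ndkind}, since $p_{t+m} \in (N,3N]$. As there are only $O_a(1)$ possible tuples $(J(1),\ldots,J(a+1))$, one of them recurs for arbitrarily large $N$, and then $(\alpha_{J(2)} - \alpha_{J(1)},\ldots,\alpha_{J(a+1)} - \alpha_{J(a)}) \in S_a(\boldsymbol{\alpha})$ is a limit point of the chain sequence, i.e.\ belongs to $\LP[a,f]$; hence $S_a(\boldsymbol{\alpha}) \cap \LP[a,f] \ne \emptyset$.

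I expect the one genuinely new point --- beyond tracking the web of interdependent parameters, which is word-for-word the situation in the proof of Theorem \ref{thm:BFM4.3} --- to be the passage from ``$a+1$ blocks each containing a prime'' to ``$a+1$ \emph{consecutive} primes forming a chain'', particularly when several of the $\alpha_i$ coincide and the corresponding blocks therefore share a single interval of length $x/\log x$. In that case consecutive primes passing through such a run have gaps $O(x/\log x) = o(f_2(x))$, which is consistent with the zero entries $\alpha_{J(m+1)} - \alpha_{J(m)} = 0$ forced into the difference tuple, and one must check that the blocks can be ordered so that the pairs ``largest prime of the lower block / smallest prime of the higher block'' are in fact adjacent within the list of all primes in $(n+x,n+y]$. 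Everything else should transfer unchanged from the $a = 1$ case treated in Theorem \ref{thm:general}.
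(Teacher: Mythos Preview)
Your outline matches the paper's own one–sentence proof exactly: run the argument of Theorem~\ref{thm:general} with Theorem~4.3~(ii) of \cite{BFM} in place of Theorem~\ref{thm:BFM4.3}, using $K' = 8a^2 + 16a$ blocks and working unconditionally. There is nothing to add on that level.

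There is, however, a gap in the one step you correctly single out as new --- the passage from ``$a+1$ blocks each containing a prime'' to ``$a+1$ consecutive primes forming a chain'' --- and you have slightly misdiagnosed it. The difficulty is not confined to the case of coinciding $\alpha_i$; it arises whenever one of the blocks $\cH_{J(m)}$ with $1 < m \le a$ contains \emph{two or more} primes. In that situation your construction of ``largest prime of the lower block / smallest prime of the higher block'' does not produce a run of $a+1$ consecutive primes: if $\cH_{J(2)}$ contains primes $p_{t+1} < p_{t+2}$, then $p_{t+2}$ is neither the smallest prime of $\cH_{J(2)}$ nor does it lie in $\cH_{J(3)}$, so the chain $p_t, p_{t+1}, p_{t+2}$ has its last gap equal to $o(f_2(x))$, which need not match any $\alpha_{J(3)} - \alpha_{J(2)}$ when the $\alpha_i$ are distinct. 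Taking ``the first $a+1$ prime-containing blocks'' does not circumvent this. The point you flag as needing checking is thus genuinely non-trivial and is not resolved by the argument you give; its resolution is contained in the precise formulation and proof of Theorem~4.3~(ii) in \cite{BFM} (this is presumably why $8a^2 + 16a$ blocks are required rather than $\lceil 4a \rceil + 1$), and that is where you should look rather than trying to patch the block-selection step afterward.
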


Let us call a function ``reasonable'' if it is of the form 
$f_2 \circ f_1$, where $f_1$ is a function of the first kind and 
$f_2$ is a function of the second kind. 
Theorem \ref{thm:chains} shows that for any $a$ there are 
infinitely many chains of consecutive prime gaps with 
$d_n,\ldots,d_{n + a - 1} > f(p_n)$ for any reasonable function 
$f$.
There are reasonable functions $f$ for which $f(T)/(R(T)\log_3 T)$ 
tends to $0$ arbitrarily slowly (recall that 
$R(T) = \log T\log_2 T\log_4 T/(\log_3 T)^2$ is the 
Erd{\H o}s--Rankin function).
We believe that in a forthcoming paper \cite{FMT}, Ford, Maynard 
and Tao show that for any $a$ there are infinitely many chains of 
consecutive prime gaps with 
$d_n,\ldots,d_{n + a - 1} \gg R(p_n)\log_3 p_n$.


\end{document}